 \newtheorem{thm}{Theorem}[section]
 \newtheorem{cor}[thm]{Corollary}
 \newtheorem{lem}[thm]{Lemma}
 \newtheorem{defn}[thm]{Definition}
 \newtheorem{rem}[thm]{Remark}
\numberwithin{equation}{section}
\newenvironment{proof}{ \par\addvspace{3mm}\noindent {\bf Proof.\ }}
{\hfill$\square$ \par \addvspace{3mm} }
\newcommand{\qed}{\hfill$\square$}
\begin{document}
\runninghead{O.~Chkadua, S.~E.~Mikhailov, D.~Natroshvili}

\title 
{Localized boundary-domain singular 
integral equations of  Dirichlet problem
for  self-adjoint second order 
strongly elliptic PDE systems
 }

\author{O. Chkadua\affil{a}, S.E. Mikhailov\affil{b}\corrauth\ and D. Natroshvili\affil{c}}

\address{\affilnum{a} A.Razmadze  Mathematical Institute of I.Javakhishvili Tbilisi State University, 2 University str., Tbilisi 0186, Georgia\\ 
\& Sokhumi State University, 9 A.Politkovskaia str.,Tbilisi 0186, Georgia.\\
\affilnum{b} 
Department of Mathematics, Brunel University London,
Uxbridge, UB8 3PH,  UK.\\
\affilnum{c} 
Department of Mathematics, Georgian Technical University, 77,
M.Kostava str.,  Tbilisi 0175, Georgia\\
 \& I.Vekua Institute of Applied Mathematics of I.Javakhishvili Tbilisi State University, 2 University str., Tbilisi 0186, Georgia.}

\corraddr{S.E.Mikhailov, Department of Mathematics, Brunel University London,
Uxbridge, UB8 3PH,  UK,\\
E-mail: {Sergey.Mikhailov@brunel.ac.uk\\
\rm This is an open access article under the terms of the Creative Commons Attribution License, which permits use, distribution and reproduction in any medium,
provided the original work is properly cited.}
}
\date{Month, Day, Year}

     \keywords{Partial differential equations, elliptic systems, variable coefficients, boundary
value problems, localized parametrix, localized
boundary-domain  integral equations,  pseudodifferential operators.}
     \MOS{35J25, 31B10, 45K05, 45A05}

     \begin{abstract}
     The paper deals with  the three--dimensional Dirichlet  boundary value problem (BVP) for a second
order strongly elliptic self-adjoint system of partial differential equations
in the divergence form with  variable   coefficients and develops
the integral potential method based on a localized parametrix.
Using Green's representation formula and
properties of the localized layer and volume potentials, we reduce the Dirichlet
BVP  to a system of localized boundary-domain  integral equations (LBDIEs).
The equivalence between the Dirichlet BVP and the corresponding LBDIE system is studied.
We establish that the obtained localized
boundary-domain integral operator belongs to the Boutet de Monvel
algebra. With the help of the Wiener-Hopf   factorization method
 we investigate corresponding Fredholm properties and prove invertibility of the
 localized operator in appropriate Sobolev (Bessel potential) spaces.
     \end{abstract}
\maketitle
\section{Introduction}
We consider the  Dirichlet  boundary-value problem (BVP) for a second
order strongly elliptic self-adjoint system of partial differential equations in the divergence form with  variable   coefficients and develop
the generalized integral potential method based on a {\em localized parametrix}.

The BVP treated in the paper is well investigated
in the literature by the variational method and also by the
classical integral potential method, when  the corresponding fundamental
solution is available in explicit form (see, e.g., \cite{HW}, \cite{LiMa}, \cite{McL}) or when at least its properties are known to be good enough (see, e.g., \cite{MT1999}, \cite{KPW2010} and references therein).

Our goal here is to develop a localized integral potential method for general second order strongly elliptic self-adjoint systems of partial differential equations with variable coefficients. 
We show that a solution of the problem can be represented by {\it explicit localized parametrix-based potentials} and that the corresponding
{\it localized boundary-domain  integral operator}
(LBDIO) is invertible, which is important for analysis of convergence and stability of LBDIE-based numerical methods for PDEs
 (see e.g. \cite{GMR2013}, \cite{Mik1}, \cite{MikNakJEM}, \cite{Po1}, \cite{SSA2000}, \cite{Taigbenu1999}, \cite{ZZA1998}, \cite{ZZA1999}).

Using Green's representation formula and
properties of the localized layer and volume potentials we reduce the Dirichlet
BVP  to a system of Localized Boundary-Domain  Integral Equations (LBDIEs).
First, we establish the equivalence between the original boundary value  problem and the corresponding
LBDIE system, which appeared to be quite non-trivial task and plays a crucial role in our analysis.
Afterwards, we establish that the localized boundary domain  integral
operator of the system belongs to the Boutet de Monvel operator algebra. Employing the Vishik-Eskin theory, based on the Wiener-Hopf factorization  method,
 we investigate corresponding Fredholm properties and prove invertibility of the localized
operator in appropriate Sobolev (Bessel potential) spaces.

In the references \cite{CMN-1}--\cite{CMN-IEOT2013}, \cite{MikMMAS2006},  the LBDIE method has been developed  for the case of scalar elliptic second order  partial differential equations with variable coefficients, and here we extend it to PDE systems.

\section{Boundary value problem and parametrix-based operators}
\subsection{Formulation of the boundary value problems and localized Green's third identity}
\label{ss2.1}
Consider a uniformly strongly elliptic second order self-adjoint matrix partial differential operator
\begin{equation}
\label{L-1}
A=A(x,\partial_x)=\big[ A_{pq}(x,\partial_x) \big]^3_{p,q=1}
= \Big[\frac{\partial}{\partial x_k}\,
\Big(a_{kj}^{pq}(x)\,\frac{\partial }{\partial x_j}\Big)\Big]^3_{p,q=1},
\end{equation}
where  $\partial_x=(\partial_1,\partial_2,\partial_3)$, $\partial_j=\partial_{x_j}=\partial/\partial x_j$,
$a_{kj}^{pq}=a_{jk}^{qp}=a^{kq}_{pj}\in C^\infty$,
 $j,k,p,q=1,2,3$.
Here and in what follows, the Einstein summation by repeated indices from $1$ to $3$ is assumed
if not otherwise stated.

  We assume that the coefficients  $a_{kj}^{pq}$ are real and the quadratic from $a_{kj}^{pq}(x)\,\eta_{kp}\,\eta_{qj}$ is uniformly positive definite  with respect to symmetric variables  $\eta_{kp}=\eta_{pk} \in \mathbb{R}$, which implies that    the principal homogeneous symbol of the operator $A(x,\partial_x)$ with opposite sign, $A(x,\xi)=[a_{kj}^{pq}(x)\xi_k\,\xi_j]_{3\times3}$  is uniformly positive definite, which for the real symmetric coefficients $a_{kj}^{pq}$ means there are positive constants $c_1$ and $c_2$   such that
  \begin{gather}
\label{1-d}
c_1\,|\xi|^2 |\zeta|^2\leq \bar\zeta \cdot A(x,\xi)\zeta\,\leq c_2\,|\xi|^2 |\zeta|^2 \quad
 \forall\;  x\in \mathbb{R}^3, \;\; \forall\;\xi\in \mathbb{R}^3,\;\;\forall\; \zeta\in \mathbb{C}^3. 
\end{gather}
Here $a \cdot b:=a^\top b:=\sum_{j=1}^3 a_j {b_j}$  is
the bilinear product of two column-vectors $a,\,b\in \mathbb{C}^3$.

Further, let $\Omega=\Omega^+$  be a bounded domain
 in $\mathbb{R}^3$ with a simply connected boundary  $\partial\Omega=S \in C^\infty$,
$\overline{\Omega}=\Omega\cup S$.
  Throughout the paper $n=(n_1,n_2,n_3)$ denotes the  unit normal vector to $S$  directed
outward the domain $\Omega$. Set $\Omega^-:=\mathbb{R}^3\setminus\overline{\Omega}$.

By  $H^r(\Omega)=H^r_2(\Omega)$ and $H^r(S)=H^r_2(S)$, $r\in \mathbb{R}$,  we denote the
Bessel potential spaces on a domain $\Omega$ and on a closed manifold $S$ without boundary, while
$\mathcal{D}( \mathbb{R} ^3)$ and $\mathcal{D}(\Omega)$ stand for $C^\infty$ functions
with compact support in $\mathbb{R}^3$
and in $\Omega$ respectively, and
$\mathcal{S}(\mathbb{R}^3)$ denotes the Schwartz space of rapidly decreasing functions in $\mathbb{R}^3$.
Recall that $H^0(\Omega)=L_2(\Omega)$ is a space of square integrable functions in $\Omega$.
For a vector $u=(u_1,u_2,u_3)^\top$ the inclusion $u=(u_1,u_2,u_3)^\top \in H^r $ means that each component $u_j$ belongs to the space $H^r $.

Let us denote by $\gamma^+u$ and $\gamma^-u$ the traces of  $u$  on  $S$
from the interior and exterior of  $\Omega^{+}$ respectively.

We also need the following subspace of $H^1(\Omega)$, see e.g. \cite{Costabel1988},
\begin{equation}
\label{3-d}
H^{1,\,0}(\Omega; A ):=\{u\!=\!(u_1,u_2,u_3)^\top\in H^{1}(\Omega ):  Au\in H^{0}(\Omega )\}.
\end{equation}

The Dirichlet boundary-value problem reads as follows:\\
{\it
Find a vector-function
 $u=(u_1,u_2,u_3)^\top \in H^{1,\,0}(\Omega,A)$ satisfying the differential equation
\begin{eqnarray}
\label{1}
Au=f \;\;\text{in} \;\; \Omega
\end{eqnarray}
and the Dirichlet boundary condition
\begin{eqnarray}
\label{2}
\gamma^+u=\varphi_{_0}      \;\;\text{on} \;\; S,
\end{eqnarray}
where  $\varphi_{_0}=(\varphi_{_{01}},\varphi_{_{02}},\varphi_{_{03}})^\top\in H^{1/2}(S)$
and $f=(f_1,f_2,f_3)^\top\in H^0(\Omega)$ are given vector functions.
}
Equation \eqref{1} is  understood in the distributional sense, while the
Dirichlet  boundary condition \eqref{2} is understood in the usual trace sense.

The {\it classical co-normal derivative operators}, $T^\pm$,  associated with the differential operator $A(x, \partial_x)$, are well defined in terms of the gradient traces on the boundary $S$ for a sufficiently smooth vector-function $v$, say
$v\in H^2(\Omega)$, as follows
\begin{align}
\label{2.4}
  [\,T^{\pm}(x, \partial_x)\,v(x)\,]_p:=
a^{pq} _{kj}(x) \,n_k(x)\,\gamma^\pm\partial_{x_j}v_q(x), \;\; x\in S,\;\; p=1,2,3.
 \end{align}

The co-normal derivative operator defined in \eqref{2.4} can be extended by
continuity to the space $H^{1,\,0}(\Omega; A)$. The extension is inspired by Green's first
identity  (cf. \cite{Costabel1988}, \cite{McL}, \cite{MikJMAA2011}) as follows,
\begin{align}
\label{4-d}
\langle T ^+\,v \,,\,g\rangle_{S } :=
\int\limits_{\Omega }[\gamma^{-1}g(x)] \cdot A (x,\partial_x)v (x)\,dx
+\!
\int\limits_{\Omega } E(v(x),\gamma^{-1}g(x))\,dx,\
  \forall g\in H^{1/2}(S),\ \forall v\in H^{1,\,0}(\Omega; A),
  \end{align}
where $\langle\cdot\,,\,\cdot\rangle_{S}$ denotes the duality between the adjoint spaces
$H^{-\frac{1}{2}}(S)$ and $H^{\frac{1}{2}}(S)$, which extends the usual bilinear $L_2(S)$ inner product,  while $E(v(x),u(x))=a _{kj}^{pq}(x)\,[\partial_{ x_j}v_q (x)]\, [\partial_{ x_k}u_p (x)]$.
By $\gamma^{-1}$ we denote a (non-unique) continuous linear extension operator acting from
  $H^{\frac{1}{2}}(S)$ into  $H^1(\mathbb{R}^3)$.
The  restrictions of $\gamma^{-1}$
  on $\Omega^+$ and $\Omega^-$ are the  right inverse operators to the
  corresponding trace operators $\gamma^+$ and $\gamma^-$.
Clearly definition \eqref{4-d} does not depend on the extension operator.

Moreover, by \cite[Lemma 3.4]{Costabel1988}, \cite[Lemma 4.3]{McL}), for any $v\in H^{1,0}(\Omega;A)$ and $u\in  H^1(\Omega)$ the first Green
identity holds in the form
\begin{equation}
\label{1GI}
\left\langle  T^{+}v\,,\, \gamma^+ u \right\rangle _{_{S}}
=
\int_{\Omega}\big[\, { u\cdot Av} + E(v,u)\,\big]\,
dx.
\end{equation}

\begin{rem}
\label{rBVP}
From the condition \eqref{1-d} it follows that the quadratic form $E(u(x),u(x))$ rewritten as
$$ 
E(u(x),u(x))= a _{kj}^{pq}(x)\,\varepsilon_{qj}(x)\, \varepsilon_{pk}(x)
$$
\text{where} \;\;
$$\varepsilon_{qj}(x)= \big(\partial_ju_q (x) +
\partial_q u_j (x)\big)/2,
$$ 
is positive definite in the symmetric variables $\varepsilon_{qj}$.
Therefore  Green's first identity \eqref{1GI}  and Korn's inequality along with the Lax-Milgram lemma imply that the Dirichlet
 BVP \eqref{1}-\eqref{2} is uniquely solvable in the space $H^{1,\,0}(\Omega; A)$
 (see, e.g., {\cite{Vishik1951}}, \cite{HW}, \cite{LiMa}, \cite{McL}).
\end{rem}
\subsection {Parametrix-based operators and integral identities}
As it has already been mentioned, our goal here is to develop the LBDIE method for the Dirichlet
 BVP \eqref{1}-\eqref{2}.

Let $F_\Delta(x):=-1/[\,4\,\pi\,|x|\,]$ denote the scalar fundamental solution of the Laplace operator,
 $\Delta=\partial^2_1+\partial^2_2+\partial^2_3$. Let us define a \textit{localized matrix parametrix} for the the matrix operator $I\Delta$ as
\begin{align}\label{2.17}
    P(x) \equiv P_{\chi}(x):=  P_\Delta(x)\,I=\chi (x)\,F_\Delta(x)\,I 
     =-\frac{\chi (x)}{4\,\pi\,|x|}\,I
 \end{align}
where $P_\Delta(x)\equiv P_{\chi\Delta}(x) :=\chi (x)\,F_\Delta(x)$ is a scalar function of the vector argument $x$, $I$ is the unit $3\times 3$ matrix, and $\chi$ is a localizing function (see Appendix A)
\begin{eqnarray}
 \label{2.18}
    \chi \in X^k_{+}\,, \;\;\;\;k\geq 3, 
         \;\;\text{with}\;\;\chi (0)=1,
\end{eqnarray}
Throughout the paper we assume that condition \eqref{2.18} is satisfied if not otherwise stated. Note that the function
${\chi}$ can have a compact support, which is useful for numerical implementations, but generally this is not necessary and the class $X^k_{+}$ include also the functions not compactly supported but sufficiently fast decreasing at infinity, see  \cite{CMN-Loc2} and Appendix A below for details. 

For sufficiently smooth vector-functions $u$ and $v$, say $u,v\in C^2(\overline{\Omega})$,
there holds Green's second identity
\begin{align}
\label{2.19}
\int _{\Omega }\big[ v \cdot A (x,\partial_x)u    - u\cdot A (x,\partial_x)v \big]\,dx  
=
\int _{S} \big[\gamma^+v \cdot T^+ u   -T^+v \cdot \gamma^+ u   \big]\,dS. 
\end{align}

Denote by $B(y,\varepsilon)$ a ball centered at point $y$, with radius $\varepsilon>0$, and let $\Sigma(y,\varepsilon):=\partial B(y,\varepsilon)$.
Let us take as $v(x)$, successively, the columns of the matrix  $P(x-y)$,
where $y$  is an arbitrarily fixed interior point
in $\Omega$, and write the   identity \eqref{2.19} for the region
$\Omega_\varepsilon:=\Omega\setminus B(y,\varepsilon)$ with $\varepsilon>0$  such that $\overline{B(y,\varepsilon)}\subset \Omega$. Keeping in mind
that $P^\top(x-y)=P(x-y)$ and $[A (x,\partial_x)P(x-y)]^\top=[A (x,\partial_x)P(x-y)]$,  we arrive at the equality,
\allowdisplaybreaks
\begin{multline}
\label{2.20}
\int _{\Omega_\varepsilon }\big [P(x-y) \; A (x,\partial_x)u(x) -
\{A (x,\partial_x)P(x-y)\} \;u(x) \big]\,dx= \\
\shoveleft{\int _{S}\big [ P(x-y)\; T^+(x,\partial_x) u(x) -
 \{T(x,\partial_x)P(x-y)\}^\top\gamma^+ u(x)  \big]\,d S_x}  \\
-\int _{\Sigma(y,\varepsilon)}\big [ P(x-y)   \;T^+(x,\partial_x) u (x)
  - \{T(x,\partial_x)P(x-y)\}^\top \gamma^+u (x) \big]\,d S_x.
\end{multline}
The normal vector on $\Sigma(y,\varepsilon)$ is directed inward $\Omega_\varepsilon$.

Let the operator $\mathcal{N}$ defined as
 \begin{align}
\label{2.21}
 \mathcal{N}\,u(y): & ={\rm{v.p.}}\int _{\Omega  }\;[\,A (x,\partial_x)P(x-y)\,]\, u(x) \,dx 
:  =\lim\limits_{\varepsilon \to 0}\int _{\Omega_\varepsilon }\;[\,A (x,\partial_x)P(x-y)\,]\, u(x) \,dx
\end{align}
be the Cauchy principal value singular integral operator, which is well defined if the limit in the right hand side exists.
The similar operator with integration over the whole space ${\mathbb{R}}^3$ is denoted as
\begin{equation}
\label{2.21-1d}
\displaystyle
{\bf N}\,u(y):={\rm{v.p.}}\int _{\mathbb{R}^3}\;[\,A (x,\partial_x)P(x-y)\,]\, u(x) \,dx\,.
\end{equation}
Note that
\begin{equation}
\label{2.22}
\displaystyle
 \frac{\partial^2}{\partial x_k\,\partial x_j}\,\frac{1}{|x-y|}=
 - \frac{4\,\pi\, \delta_{kj}}{3}\;\delta(x-y)+ {\rm{v.p.}} \,\frac{\partial^2}{\partial x_k\,\partial x_j}\frac{1}{|x-y|},
\end{equation}
where $\delta_{kj}$ is the Kronecker delta, and $\delta(\,\cdot\,)$ is the Dirac distribution, the left-hand side in \eqref{2.22}
is also understood in the distributional sense, while the second summand in the right hand side is a Cauchy-integrable function.
Therefore, in view of \eqref{2.17} and taking into account that $\chi(0)=1$, we can write the following equality in the distributional sense
\begin{align}
 &\big[ A (x,\partial_x)P(x-y)]_{pq} \label{2.23-1} 
 =
 \Big[\frac{\partial}{\partial x_k} \Big(a^{pr}_{kj}(x) \frac{\partial P_{rq}(x-y)}{\partial x_j}\Big)\Big] 
 = \Big[\frac{\partial}{\partial x_k} \Big(a^{pr}_{kj}(x) \delta_{rq}\frac{\partial P_\Delta(x-y)}{\partial x_j}\Big)\Big]
\nonumber\\
 &=a^{pq}_{kj}(x)\frac{\partial^2P_\Delta(x-y)}{\partial x_k\,\partial x_j} +
 \frac{\partial a^{pq}_{kj}(x)}{\partial x_k}\,\frac{\partial P_\Delta(x-y)}{ \partial x_j} 
  =
 a^{pq}_{kj}(x)\,\Big[
 \frac{ \delta_{kj}}{3}\;\delta(x-y)+ {\rm{v.p.}} \,
 \frac{\partial^2 P_\Delta(x-y)}{\partial x_k\,\partial x_j}\Big] 
 +
 \frac{\partial a^{pq}_{kj}(x)}{\partial x_k}\,\frac{\partial P_\Delta(x-y)}{ \partial x_j} \nonumber\\
 &=  \boldsymbol\beta_{pq}(x) \;\delta(x-y)+ {\rm{v.p.}}\;[A (x,\partial)P(x-y)]_{pq}, 
\end{align}
where
\begin{align}
\label{2.24}
 \displaystyle
& \boldsymbol\beta(x)
 =[\,\boldsymbol\beta_{pq}(x)\,]_{p,q=1}^3,\quad
 \boldsymbol\beta_{pq}(x)=\frac{1}{3}\; a^{pq}_{kk}(x),
\\
& {\rm{v.p.}}[\,A (x,\partial_x)P(x-y)]_{pq} 
=   {\rm{v.p.}} \Big[-\frac{a^{pq}_{kj}(x)}{4\,\pi} \frac{\partial^2}{\partial x_k\,\partial x_j}\,\frac{1}{|x-y|}\Big]
 +R_{pq}(x,y)
 =   {\rm{v.p.}} \Big[-\frac{a^{pq}_{kj}(y)}{4\,\pi} \frac{\partial^2}{\partial x_k\,\partial x_j}\,\frac{1}{|x-y|}\Big]
 +R^{(1)}_{pq}(x,y)\,, 
  \label{2.23-1-d}\\
& R(x,y)=[ R_{pq}(x,y)]_{p,q=1}^3\,,\;\;\;\; R^{(1)}(x,y)=[ R^{(1)}_{pq}(x,y)]_{p,q=1}^3,
   \label{2.25} 
   \\
&  R_{pq}(x,y):=- \frac{a^{pq}_{kj}(x)}{4 \pi}  \Big\{
  \big[\chi(x-y)-1\big]\, \frac{\partial^2}{\partial x_k \partial x_j}\frac{1}{|x-y|}
  +  \frac{\partial^2 \chi(x-y)}{ \partial x_k\partial x_j}\,\frac{1}{|x-y|}+
  \, \frac{\partial  \chi(x-y)}{\partial x_j}\, \frac{\partial}{ \partial x_k}\frac{1}{|x-y|}
  \nonumber \\
  & \qquad\qquad\quad
   +
  \, \frac{\partial  \chi(x-y)}{\partial x_k}\, \frac{\partial}{ \partial x_j}\frac{1}{|x-y|} \Big\}
   - \frac{1}{4 \pi}\,  \frac{\partial a^{pq}_{kj}(x)}{\partial x_k}\, \Big[
   \frac{\partial  \chi(x-y)}{ \partial x_j}\,\frac{1}{|x-y|}
   +
  \chi(x-y)\, \frac{\partial}{ \partial x_j}\frac{1}{|x-y|} \Big], 
     \label{2.25-1ddd}
   \\[2mm]
&  R_{pq}^{(1)}(x,y):=R_{pq}(x,y)-\frac{a^{pq}_{kj}(x)-a^{pq}_{kj}(y)}{4\,\pi} \frac{\partial^2}{\partial x_k\,\partial x_j}   \frac{1}{|x-y|} \,. 
 \end{align}
Clearly the entries of the matrix-functions $R(x,y)$ and $R^{(1)}(x,y)$ possess  weak singularities of type $ \mathcal{O}(|x-y|^{-2})$ as $x\to y$.

Denote by $\mathring E$   the extension operator by zero from $\Omega$ onto $\Omega^-$.  From the definitions \eqref{2.21} and \eqref{2.21-1d} it is evident that
\begin{align}\label{N0}
\big(\mathcal{N}\,u\big)(y)= \big({\bf N}\mathring E u\big) (y)\;\;\;\text{for}\;\; y\in \Omega,
\quad u\in H^{r}(\Omega),\ r\ge 0.
\end{align}
The definition of $\mathcal N$ can be extended to smaller $r$ as
\begin{align}\label{N1}
\big(\mathcal{N}\,u\big)(y):= \big({\bf N}\widetilde E^r u\big) (y)\;\;\;\text{for}\;\; y\in \Omega,
\quad u\in H^{r}(\Omega),\ -1/2<r<1/2,
\end{align}
where $\widetilde E^r: H^r(\Omega)\to \widetilde H^r(\Omega)$ is the extension operator,  uniquely defined for $-1/2<r<1/2$, see e.g. \cite[Theorem 2.16]{MikJMAA2011}. For $0\le r<1/2$, $\widetilde E^r=\mathring E$ and thus the expressions \eqref{N0} and \eqref{N1} coincide for such $r$.

 From decomposition \eqref{2.23-1-d} it follows that (see, e.g., \cite{BdM}, \cite[Theorem 8.6.1]{HW})
 if $\chi \in X^k$ with integer $k\ge 2$, then
\begin{align}
\label{2.40}
r_{_{\Omega}}{\mathcal N}=r_{_{\Omega}}{\bf N}\,\mathring E \,&:\,H^r(\Omega)\to H^r(\Omega),\qquad\quad  0\le r,\\
\label{2.40-}
r_{_{\Omega}}{\mathcal N}=r_{_{\Omega}}{\bf N}\,\widetilde E^r \,&:\,H^r(\Omega)\to H^r(\Omega),\quad  -1/2<r<1/2,
\end{align}
are bounded since the principal homogeneous symbol of {\bf N} is rational (see \eqref{2.38} in Section~\ref{S-Symb}), and the operators with the kernel functions either $R(x,y)$ or $R_1(x,y)$
maps $H^r(\Omega)$ into $H^{r+1}(\Omega)$  (cf. \cite[ Theorem 5.4]{CMN-Loc2}).
Here and throughout the paper $r_{_{\Omega }}$ denotes the restriction operator to $\Omega$.

Further, by direct calculations one can easily verify that
\begin{align}
\label{2.26}
&
\displaystyle
\lim\limits_{\varepsilon \to 0}
\int _{\Sigma(y,\varepsilon)} P(x-y)\;T(x,\partial_x) u (x)   \,d\Sigma(y,\varepsilon)=0,\\[2mm]
&
\lim\limits_{\varepsilon \to 0}
\int _{\Sigma(y,\varepsilon)}    \;\{T(x,\partial_x)P(x-y)\}\, u (x) \,d\Sigma(y,\varepsilon)
=\Big[\frac{a^{pq}_{kj}(y)}{4\,\pi}\int _{\Sigma_1}    \eta_k\,\eta_j \,d\Sigma_1\Big]_{3\times 3}\,u(y) 
 =\Big[\frac{ a^{pq}_{kj}(y)}{4\,\pi}\,\frac{4\,\pi\,\delta_{kj}}{3}\Big]_{3\times 3}\, u(y) 
=\boldsymbol\beta(y)\,u(y),
\label{2.27}
\end{align}
where  $\Sigma_1$ is a unit sphere, $ \eta=(\eta_1, \eta_2, \eta_3)\in \Sigma_1$  and  $\boldsymbol\beta$ is defined by \eqref{2.24}.

Passing to the limit in \eqref{2.20} as $\varepsilon \to 0$ and using
relations \eqref{2.21}, \eqref{2.26}, and \eqref{2.27}, we obtain
\begin{align}
\label{2.28}
\boldsymbol\beta(y)\,u(y)+  \mathcal{N}\,u(y)-V(T^+u)(y)+W(\gamma^+u)(y) 
= \mathcal{P} \big(A u\big)(y), \;\;\;\; y\in \Omega, 
\end{align}
where $ \mathcal{N}$ is a {\it localized singular integral operator} given by \eqref{2.21}, while   $V$, $W$, and $\mathcal{P}$
are the {\it localized vector single layer, double layer and Newtonian volume potentials},
\begin{align}
\label{2.29}
 &
 V g(y):=-\int _{S} P ( x-y)\, g(x)\,dS_x,  \\
 \label{2.30}
  &
  W g(y):=-\int _{S}\big[\,T (x, \partial_x)\,P(x-y)\,\big]\, \, g(x)\,dS_x, \\
\label{2.31}
&
 \mathcal{P}h(y):= \int _{\Omega} P(x-y)\,h(x)\,dx.
\end{align}
Here the densities $g$ and $h$ are three dimensional vector-functions.
Introducing the following localised scalar Newtonian volume potential
\begin{eqnarray}
\label{2.29-1}
  \mathcal P_\Delta h_0(y):= \int \limits_{\Omega} P_\Delta(x-y)\,h_0(x)\,dx
\end{eqnarray}
with $h_0$ being a scalar density function, we evidently obtain,
$$
 [\mathcal{P}h(y)]_p=\mathcal P_\Delta h_p (y), \quad  p=\overline{1,3},
$$
for any vector function $h=(h_1, h_2,h_3)^\top$.

We will also need the localised vector Newtonian volume potential similar to \eqref{2.31} but with integration over the whole
space ${\mathbb{R}}^3$,
\begin{eqnarray}
\label{2.31-1d}
{\bf P}h(y):= \int _{\mathbb{R}^3} P(x-y)\,h(x)\,dx.
\end{eqnarray}

Mapping  properties of potentials \eqref{2.29}-\eqref{2.31-1d} are investigated in \cite{CMN-Loc2}, \cite{CMN-IEOT2013} and provided in Appendix B.

We refer to relation \eqref{2.28} as {\it Green's third identity}.
Due to the density of $\mathcal D(\overline{\Omega})$ in $H^{1,\,0}(\Omega;A)$ (see \cite[Theorem 3.12]{MikJMAA2011}) and the mapping properties of the potentials,  Green's third
 identity \eqref{2.28} is valid also for $u\in H^{1,\,0}(\Omega;A)$.
 In this case, the co-normal derivative $T^+u$ is understood in the sense of definition \eqref{4-d}.
 In particular,
\eqref{2.28} holds true for solutions of the above formulated Dirichlet BVP \eqref{1}-\eqref{2}.


On the other hand, applying the first Green identity \eqref{1GI} on $\Omega_\varepsilon$ to $u\in H^1(\Omega)$ and to $P(x-y)$, as $v(x)$, and taking the limit as $\varepsilon\to 0$, one can easily derive another, more general form of the third Green identity,
\begin{equation}
\label{2.32}
\boldsymbol\beta(y)\,u(y) + \mathcal{N}\,u(y) + W(\gamma^+ u)(y)=  \mathcal{Q}\,u(y), \;\;\;\; \forall\,y\in \Omega,
\end{equation}
where for the $p$-th component of the vector $ \mathcal{Q}\,u(y)$ we have
\begin{align}
[ \mathcal{Q}\,u(y)]_p: & =-\int _{\Omega} a^{pq}_{kl}(x)\,\frac{\partial P_\Delta(x-y)}{\partial x_k}\;\frac{\partial u_q(x)}{\partial x_l}\,dx 
=
\partial_k\,\mathcal P_\Delta\big( a^{pq}_{kl}\, \partial_l u_q\big)(y)\,, \;\;\;\; \forall\,y\in \Omega.
\label{2.33}
\end{align}

Using the properties of localized potentials described in the Appendix B (see Theorems \ref{tB.1} and \ref{tB.7}) and taking the
trace of equation \eqref{2.28} on $S$ we arrive at the relation for $u\in H^{1,\,0}(\Omega^+;A)$,
\begin{equation}
\label{2.41}
 \displaystyle
 \mathcal N^+{u}-{ \mathcal{V}}(T^+u)+(\boldsymbol\beta-\boldsymbol{\mu})\,\gamma^+ u +{\mathcal{W}}(\gamma^+ u)
 ={ \mathcal{P}}^+ \big(A u\big)\quad   \text{on}\;\;S,
\end{equation}
where the  localized boundary integral operators ${\mathcal{V}}$ and  ${\mathcal{W}}$ are generated by the localized single and
double layer potentials and are defined in \eqref{3.11} and \eqref{3.12},
the matrix $\boldsymbol{\mu}$ is defined by \eqref{mu}, while
\begin{eqnarray*}
 {\mathcal N}^+:=\gamma^+{\mathcal N},\quad
 {\mathcal{P}}^+  := \gamma^+{\mathcal{P}}.
 \end{eqnarray*}
Now we prove the following technical lemma.
\begin{lem}
\label{l2.1}
Let $\chi\in X^3$,
$f \in H^0(\Omega),$
 $F \in H^{1,0}(\Omega,\Delta),$
 $\psi \in H^{-\frac{1}{2}}(S),$  and  $\varphi \in H^{\frac{1}{2}}(S)$.
 Moreover, let $u \in H^1(\Omega)$  and
the following equation hold
\begin{align}
\label{2.44}
\boldsymbol\beta(y)u(y)+ {\mathcal{N} } u(y)-V\psi (y)+W\varphi (y)
=F(y)+\mathcal P f (y), \;\; y\in \Omega.
\end{align}
Then $u\in H^{1,0}(\Omega, A)$.
\end{lem}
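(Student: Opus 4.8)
The plan is to eliminate the awkward term $\boldsymbol\beta u+\mathcal N u$ from \eqref{2.44} by combining it with the second form \eqref{2.32} of the localized third Green identity, which — unlike \eqref{2.28} — is valid for every $u\in H^1(\Omega)$. Subtracting \eqref{2.32} from \eqref{2.44} and inserting \eqref{2.33} gives
\[
\mathcal Q u \;=\; F+\mathcal P f + V\psi + W(\gamma^+u-\varphi)\qquad\text{in }\Omega .
\]
The right-hand side is built only from the data $F,f,\psi,\varphi$ and from $\gamma^+u$, and $\gamma^+u\in H^{1/2}(S)$ since $u\in H^1(\Omega)$. So it suffices to prove, first, that the right-hand side lies in $H^{1,0}(\Omega,\Delta)$, and second, that $\mathcal Q u\in H^{1,0}(\Omega,\Delta)$ forces $Au\in H^0(\Omega)$.

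For the first point I would argue term by term. We have $F\in H^{1,0}(\Omega,\Delta)$ by hypothesis. Summing \eqref{2.22} over $k=j$ yields $\Delta_x\,\tfrac{1}{|x-y|}=-4\pi\,\delta(x-y)$, hence the scalar identity $\Delta\mathcal P_\Delta h_0 = h_0 + \mathcal R_\Delta h_0$, where $\mathcal R_\Delta$ is a volume integral operator whose kernel has a weak singularity of type $\mathcal O(|x-y|^{-2})$ and therefore (cf. \cite[Theorem 5.4]{CMN-Loc2}) is bounded from $H^r(\Omega)$ into $H^{r+1}(\Omega)$; since $f\in H^0(\Omega)$ this gives $\mathcal P f\in H^{1,0}(\Omega,\Delta)$. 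Finally, by the mapping properties of the localized layer potentials collected in Appendix B (this is where the hypothesis $\chi\in X^3$ enters), the operators $V:H^{-1/2}(S)\to H^{1,0}(\Omega,\Delta)$ and $W:H^{1/2}(S)\to H^{1,0}(\Omega,\Delta)$ are bounded, so $V\psi$ and $W(\gamma^+u-\varphi)$ belong to $H^{1,0}(\Omega,\Delta)$. Altogether $\mathcal Q u\in H^{1,0}(\Omega,\Delta)$, i.e. $\Delta\,\mathcal Q u\in H^0(\Omega)$.

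For the second point I would use the explicit structure of $\mathcal Q$. By \eqref{2.33}, $[\mathcal Q u]_p=\partial_k\mathcal P_\Delta\big(a^{pq}_{kl}\,\partial_l u_q\big)$; commuting the constant-coefficient operator $\Delta$ past $\partial_k$ and using $\Delta\mathcal P_\Delta=I+\mathcal R_\Delta$,
\[
\Delta\,[\mathcal Q u]_p=\partial_k\big(a^{pq}_{kl}\,\partial_l u_q\big)+\partial_k\,\mathcal R_\Delta\big(a^{pq}_{kl}\,\partial_l u_q\big)=[A u]_p+\partial_k\,\mathcal R_\Delta\big(a^{pq}_{kl}\,\partial_l u_q\big),
\]
the last equality being just \eqref{L-1}. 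As $u\in H^1(\Omega)$ and the coefficients are smooth, $a^{pq}_{kl}\,\partial_l u_q\in H^0(\Omega)$, so $\mathcal R_\Delta\big(a^{pq}_{kl}\,\partial_l u_q\big)\in H^1(\Omega)$ and the second summand lies in $H^0(\Omega)$. Since $\Delta\,\mathcal Q u\in H^0(\Omega)$ by the first point, this gives $Au\in H^0(\Omega)$; together with $u\in H^1(\Omega)$ this is exactly $u\in H^{1,0}(\Omega,A)$.

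The hard part is the potential-theoretic input in the first point: controlling the Laplacians of the \emph{localized} single and double layer potentials. For the classical (non-localized) potentials this is vacuous because they are harmonic in $\Omega$, but the localizing factor $\chi$ produces lower-order remainder operators whose mapping behaviour has to be tracked carefully — precisely what the potential theorems of Appendix B are for. It is also what makes the detour through \eqref{2.32} necessary: one cannot simply apply $A$ to \eqref{2.44}, since an element of $H^{1,0}(\Omega,\Delta)$ such as $F$ need not belong to $H^{1,0}(\Omega,A)$, whereas passing to \eqref{2.32} replaces $\boldsymbol\beta u+\mathcal N u$ by the operator $\mathcal Q$, which is built from the scalar potential $\mathcal P_\Delta$ and hence commutes cleanly with $\Delta$.
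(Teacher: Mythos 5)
Your proposal is correct and follows the same route as the paper's proof: replace the term $\boldsymbol\beta u+\mathcal N u$ via the second Green identity \eqref{2.32}--\eqref{2.33}, use the Appendix~B mapping theorems to put the resulting right-hand side in $H^{1,0}(\Omega,\Delta)$, and then apply $\Delta$ through $\mathcal Q=\partial_k\mathcal P_\Delta(a^{pq}_{kl}\partial_l u_q\cdot)$ using $\Delta\mathcal P_\Delta=I+\mathcal R_\Delta$ with $\mathcal R_\Delta:H^0\to H^1$ to recover $Au$ plus an $H^0$ remainder, exactly as in \eqref{2.44a}--\eqref{2.51}. The only microscopic difference is that you derive $\mathcal Pf\in H^{1,0}(\Omega,\Delta)$ from the identity $\Delta\mathcal P_\Delta=I+\mathcal R_\Delta$ rather than quoting Theorem~\ref{tB.1} directly; this handles the $\Delta$-part but you should also note that $\mathcal Pf\in H^1(\Omega)$ itself, which still needs the mapping property of the Newtonian potential from Theorem~\ref{tB.1} — a trivial addition.
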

\begin{proof} Note that by Theorem \ref{tB.1}, $\mathcal P f \in H^{2}(\Omega)$ for arbitrary $f\in H^0(\Omega)$,
while by Theorem \ref{tB.4} the inclusions  $V\psi , W\varphi \in H^{1,0}(\Omega,\Delta)$ hold for arbitrary $\psi\in H^{-\frac{1}{2}}(S)$  and $\varphi\in H^{\frac{1}{2}}(S)$.
 In view of the relations \eqref{2.32}-\eqref{2.33}
equation \eqref{2.44} can be rewritten component-wise  as
\begin{align}\label{2.44a}
 \partial_k\,{\mathcal P_\Delta}\big( a^{pq}_{kl}\, \partial_l u_q\big)(y)
 =F_p(y)+{\mathcal P_\Delta}f_p(y)+[V\psi (y)]_p
 -
 [W(\varphi-\gamma^+ u)(y)]_p, \quad
  y\in \Omega\;\; p=\overline{1,3}.
\end{align}
By  Theorems \ref{tB.1} and \ref{tB.4} it follows that the right-hand side function
in the equality  belongs to the space
$$
H^{1,0}(\Omega,\Delta):=\{v\in H^{1}(\Omega)\,:\,\Delta v\in H^{0}(\Omega)   \}\,,
$$
since
$\gamma^+ u\in H^{\frac{1}{2}}(S)$, and therefore
\begin{align}
\label{2.46}
\partial_k\,{\mathcal P_\Delta}\big( a^{pq}_{kl}\, \partial_l u_q\big)\in H^{1,0}(\Omega, \Delta)\,.
\end{align}
We have
\begin{align}
\label{2.47}
 \Delta_x\,P_\Delta(x-y)= \delta(x-y)+R_\Delta(x-y),
\end{align}
where
\begin{align}
\label{2.48}
 \displaystyle
 R_\Delta(x-y):= -\frac{1}{4\,\pi}\,\Big\{  \frac{\Delta \,\chi(x-y)}{|x-y|}
 + 2\,\frac{\partial \, \chi(x-y) }{\partial x_l}\, \,\frac{\partial}{ \partial x_l}\frac{1}{|x-y|} \Big\}\,.
 \end{align}
 Clearly, $R_\Delta(x-y)={\mathcal{O}}(|x-y|^{-2})$ as $x\to y$ and by \eqref{2.47}
 and \eqref{2.48} one can establish that
 for arbitrary scalar test function $\phi \in {\mathcal{D}}(\Omega)$, there holds the relation (see, e.g., \cite{MP})
 \begin{align}
\label{2.49}
\Delta \,{\mathcal P_\Delta}\phi (y)=\phi(y) +  {\mathcal{R}}_\Delta\phi (y),\;\;\;y\in \Omega,
\end{align}
where
\begin{align}
\label{2.50}
\displaystyle
{\mathcal{R} }_\Delta\phi (y):= \int _{\Omega} R_\Delta(x-y)\,\phi(x)\,dx.
\end{align}
Evidently \eqref{2.49} remains true also for $\phi\in H^{0}(\Omega)$,  since ${\mathcal{D}}(\Omega)$ is dense in $H^{0}(\Omega)$.
It is easy to see that (see \cite{CMN-Loc2})
\begin{align}
\label{2.50-1}
{\mathcal{R}}_\Delta\,:\, H^{0}(\Omega)\to H^{1}(\Omega)\,.
\end{align}
Consequently,
\begin{align}
 \Delta\Big[\partial_k\,{\mathcal P_\Delta}\big( a^{pq}_{kl}\, \partial_l u_q\big)(y)\,\Big]=
 \partial_k \,\Big[\,\Delta_y{ \mathcal P_\Delta}\big( a^{pq}_{kl}\, \partial_l u_q\big)(y)\,\Big]
  =\partial_k \,\Big[  a^{pq}_{kl}(y)\, \partial_l  u_q(y)  \,\Big]
+\partial_k\,{\mathcal{R} }_\Delta(a^{pq}_{kl}\, \partial_l u_q)(y)&\nonumber\\
 =
 [A\,u(y)]_p+\partial_k\,{\mathcal{R}}_\Delta(a^{pq}_{kl}\, \partial_l u_q)(y)\,,& \;\;\;\;y\in \Omega\,.
 \label{2.51}
 \end{align}
Whence the embedding $A u\in H^{0}(\Omega)$ follows from \eqref{2.44a} due to  \eqref{2.46} and \eqref{2.50-1}.
\end{proof}

Actually,
the continuity of  operator in \eqref{2.50-1} and identity \eqref{2.51}
in the proof of Lemma~\ref{l2.1} imply by \eqref{2.32}
the following assertion.
\begin{cor}\label{C1}
If $\chi\in X^3$ then the following operator
is bounded,
\begin{align*}
{\boldsymbol\beta}+\mathcal{N}\,:\, H^{1,0}(\Omega,A)\to H^{1,0}(\Omega,\Delta)\,.
\end{align*}
\end{cor}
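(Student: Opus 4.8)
The plan is to leverage the work already done in the proof of Lemma~\ref{l2.1}, now tracking boundedness of the operators involved rather than mere membership in the target space. Starting from the general third Green identity \eqref{2.32}--\eqref{2.33}, we have the component-wise representation
\begin{align}
\label{C1-plan-1}
[({\boldsymbol\beta}+\mathcal{N})u(y)]_p = \partial_k\,{\mathcal P_\Delta}\big( a^{pq}_{kl}\, \partial_l u_q\big)(y) - [W(\gamma^+ u)(y)]_p, \quad y\in\Omega,\ p=\overline{1,3},
\end{align}
valid for $u\in H^{1,0}(\Omega,A)$. I would argue that the right-hand side defines a bounded operator from $H^{1,0}(\Omega,A)$ into $H^{1,0}(\Omega,\Delta)$ by examining the two terms separately.

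First, for the double-layer term: since the trace operator $\gamma^+: H^{1}(\Omega)\to H^{1/2}(S)$ is bounded (and a fortiori on $H^{1,0}(\Omega,A)\subset H^1(\Omega)$), and by Theorem~\ref{tB.4} the localized double-layer potential $W: H^{1/2}(S)\to H^{1,0}(\Omega,\Delta)$ is bounded, the composition $W\gamma^+: H^{1,0}(\Omega,A)\to H^{1,0}(\Omega,\Delta)$ is bounded. Second, for the volume-potential term $\partial_k{\mathcal P_\Delta}(a^{pq}_{kl}\partial_l u_q)$: the map $u\mapsto a^{pq}_{kl}\partial_l u_q$ is bounded from $H^1(\Omega)$ into $H^0(\Omega)$ (smooth coefficients, one derivative), then $\mathcal P_\Delta: H^0(\Omega)\to H^2(\Omega)$ is bounded by Theorem~\ref{tB.1}, so $\partial_k{\mathcal P_\Delta}(a^{pq}_{kl}\partial_l u_q)\in H^1(\Omega)$ with norm controlled by $\|u\|_{H^1(\Omega)}$. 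This gives boundedness into $H^1(\Omega)$; it remains to control the Laplacian. Here I invoke identity \eqref{2.51}, which shows
\begin{align}
\label{C1-plan-2}
\Delta\Big[\partial_k\,{\mathcal P_\Delta}\big( a^{pq}_{kl}\, \partial_l u_q\big)\Big] = [A\,u]_p + \partial_k\,{\mathcal{R}}_\Delta(a^{pq}_{kl}\, \partial_l u_q)\,.
\end{align}
The first summand $[Au]_p$ is controlled in $H^0(\Omega)$ precisely by the $H^{1,0}(\Omega,A)$-norm of $u$ (this is the defining seminorm of that space); the second summand is controlled in $H^1(\Omega)\subset H^0(\Omega)$ because $\mathcal{R}_\Delta: H^0(\Omega)\to H^1(\Omega)$ is bounded by \eqref{2.50-1} and $a^{pq}_{kl}\partial_l u_q$ depends boundedly on $u\in H^1(\Omega)$. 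Combining, the $H^0(\Omega)$-norm of $\Delta[\partial_k{\mathcal P_\Delta}(a^{pq}_{kl}\partial_l u_q)]$ is bounded by $C\|u\|_{H^{1,0}(\Omega,A)}$, so this term lands boundedly in $H^{1,0}(\Omega,\Delta)$.

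Adding the two contributions shows ${\boldsymbol\beta}+\mathcal{N}: H^{1,0}(\Omega,A)\to H^{1,0}(\Omega,\Delta)$ is bounded, which is the claim. There is no serious obstacle here — the corollary is essentially a bookkeeping restatement of the lemma's proof with norm estimates made explicit; the only point requiring a little care is recognizing that the $H^0$-control of $Au$ is built into the norm of the domain space $H^{1,0}(\Omega,A)$, so that the term $[Au]_p$ on the right of \eqref{C1-plan-2} is not an obstruction but is handled by definition. One should also note that the validity of \eqref{2.32} and the representation \eqref{C1-plan-1} for all $u\in H^{1,0}(\Omega,A)$ follows, as remarked after \eqref{2.33}, by density of $\mathcal D(\overline\Omega)$ in $H^{1,0}(\Omega,A)$ together with the continuity of all operators involved.
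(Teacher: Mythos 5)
Your argument is correct and tracks the paper's own proof, which is a one-line remark citing exactly the same ingredients: the representation \eqref{2.32}--\eqref{2.33}, the identity \eqref{2.51}, and the continuity in \eqref{2.50-1}. You have simply unpacked that terse statement into explicit norm estimates, together with the routine boundedness of $\gamma^+$, $W$ (Theorem~\ref{tB.4}) and $\mathcal P_\Delta$ (Theorem~\ref{tB.1}); there is no substantive difference in approach.
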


\section{LBDIE formulation of the Dirichlet problem and the equivalence theorem}
\label{ss2.2}
Let $u\in H^{1,0}(\Omega, A)$ be a solution to the Dirichlet BVP \eqref{1}-\eqref{2}
with $\varphi_{_0}\in H^{\frac{1}{2}}(S)$ and  $f\in H^0(\Omega)$.
As we have derived above,   there hold relations \eqref{2.28} and \eqref{2.41}, which now can be rewritten
in the form
\begin{align}
\label{2.52}
&
\displaystyle
( \boldsymbol\beta+\mathcal N){u} -V\psi  =\mathcal{ P}f  -W\varphi_{_0}    \;\;\text{in}\;\; \Omega,\\
\label{2.53}
&
\displaystyle
 \mathcal N^+{u}-\mathcal{ V}\psi =\mathcal{ P}^+ f  -(\boldsymbol\beta-\boldsymbol{\mu})\,\varphi_{_0} -\mathcal{ W}\varphi_{_0}   \;\;\text{on}\;\;S,
\end{align}
where  $\psi:=T^+u\in H^{-\frac{1}{2}}(S)$ and $\boldsymbol{\mu}$ is defined by \eqref{mu}.
One can consider these relations as an LBDIE system with respect to the
unknown vector-functions $u$ and $\psi$.
Now we prove  the following  equivalence theorem.
\begin{thm}
\label{th-D-eq}
Let $ \chi \in X^3_+,  \varphi_0\in  H^{\frac{1}{2}}(S)$  and  $f\in H^0 (\Omega)$.

(i) If a vector-function $u\in H^{1,\,0}(\Omega,A)$ solves the Dirichlet
BVP \eqref{1}-\eqref{2}, then the solution is unique and the pair
$(u,\psi)\in H^{1,\,0}(\Omega,A)\times {H}^{-\frac{1}{2}}(S)$ with
\begin{eqnarray}
\label{2.54}
\psi=T^+u\,,
\end{eqnarray}
solves the LBDIE system \eqref{2.52}-\eqref{2.53}.

(ii) Vice versa, if a pair $(u,\psi)\in H^{1,\,0}(\Omega,A)\times {H}^{-\frac{1}{2}}(S)$
solves the LBDIE system \eqref{2.52}-\eqref{2.53}, then the solution is unique and
the vector-function $u$ solves the Dirichlet BVP \eqref{1}-\eqref{2},
and relation \eqref{2.54} holds.
\end{thm}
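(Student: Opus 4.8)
The plan is to prove the two implications separately, using Green's third identities already established and the technical Lemma~\ref{l2.1}.

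For part (i), suppose $u\in H^{1,0}(\Omega,A)$ solves the Dirichlet BVP \eqref{1}-\eqref{2}. Uniqueness follows immediately from Remark~\ref{rBVP}, since the BVP is uniquely solvable by the Lax--Milgram argument there. Setting $\psi:=T^+u$, which lies in $H^{-1/2}(S)$ because $u\in H^{1,0}(\Omega,A)$ and the co-normal derivative operator of \eqref{4-d} is bounded, I would simply invoke the fact (noted right after \eqref{2.33} and again after \eqref{2.41}) that Green's third identity \eqref{2.28} and its trace \eqref{2.41} hold for all $u\in H^{1,0}(\Omega,A)$. Substituting $Au=f$ and $\gamma^+u=\varphi_0$ into \eqref{2.28} and \eqref{2.41} turns them verbatim into \eqref{2.52} and \eqref{2.53}. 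So this direction is essentially bookkeeping.

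For part (ii), suppose $(u,\psi)\in H^{1,0}(\Omega,A)\times H^{-1/2}(S)$ solves \eqref{2.52}-\eqref{2.53}. The first step is to apply Lemma~\ref{l2.1}: equation \eqref{2.52} is exactly of the form \eqref{2.44} with $F:=0$ (or absorbed appropriately), $f$ the given datum, $\varphi:=\varphi_0$, so even starting only from $u\in H^1(\Omega)$ the lemma would upgrade $u$ to $H^{1,0}(\Omega,A)$; here we are already given that, but the real use is what the proof of the lemma computes. Next, write Green's third identity \eqref{2.28} for this $u$ (legitimate since $u\in H^{1,0}(\Omega,A)$) and subtract it from \eqref{2.52}. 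This yields
\begin{align*}
V(T^+u-\psi)(y) - W(\gamma^+u-\varphi_0)(y) = \mathcal{P}(Au-f)(y), \qquad y\in\Omega.
\end{align*}
Set $w:=Au-f\in H^0(\Omega)$, $\psi^*:=T^+u-\psi\in H^{-1/2}(S)$, $\varphi^*:=\gamma^+u-\varphi_0\in H^{1/2}(S)$, so that $V\psi^* - W\varphi^* = \mathcal{P}w$ in $\Omega$. Now apply the operator $A(x,\partial_x)$ to both sides: since $A(x,\partial_x)V\psi^*=0$ and $A(x,\partial_x)W\varphi^*=0$ in $\Omega$ (the layer potentials with the parametrix are not null-solutions of $A$ in general, so more precisely one must use the relation $A\,\mathcal{P}h = h + $ remainder operators applied to $h$, exactly as in \eqref{2.51}, and the corresponding identities for $V,W$ derived in Appendix~B), one obtains that $w$ satisfies a second-kind equation $w + (\text{smoothing operator})(w,\psi^*,\varphi^*)=0$; chasing the identities shows in fact $w=0$, i.e. $Au=f$. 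The cleanest route, which I would actually take, is: having $Au=f$, the leftover identity becomes $V\psi^*=W\varphi^*$ in $\Omega$, and then taking the trace and comparing with \eqref{2.53} gives $\mathcal{V}\psi^*=(\boldsymbol\beta-\boldsymbol\mu)\varphi^*+\mathcal{W}\varphi^*$ on $S$; combined with jump relations for the localized potentials across $S$ (Appendix~B), one deduces $\psi^*=0$ and $\varphi^*=0$, i.e. $\gamma^+u=\varphi_0$ and $\psi=T^+u$. Thus $u$ solves the BVP and \eqref{2.54} holds; uniqueness of the pair then follows from uniqueness for the BVP (part (i)) together with the fact that $\psi$ is determined by $u$.

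The main obstacle is the interior step in part (ii): showing that $V\psi^* - W\varphi^* = \mathcal{P}w$ in $\Omega$ forces $w=0$ and then $\psi^*=\varphi^*=0$. This requires precise knowledge of how $A$ acts on the localized potentials $V$, $W$, $\mathcal{P}$ — the analogues of \eqref{2.47}-\eqref{2.51} for the full operator $A$ rather than $\Delta$, including the remainder (smoothing) operators with kernels of type $R(x,y)$, $R_\Delta(x,y)$ — and careful use of the mapping and jump properties collected in Appendix~B. One must check that the "remainder" contributions are genuinely of higher smoothing order so that the resulting equations for $(w,\psi^*,\varphi^*)$ are of Fredholm second kind with trivial kernel, rather than something that could support a nontrivial solution. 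Establishing this rigorously — essentially verifying that the parametrix-based potentials reproduce the boundary-value data up to controllable remainders — is where the real work lies; the rest is substitution and trace-taking.
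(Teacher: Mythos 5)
Your part (i) matches the paper and is fine. Part (ii), however, has a genuine gap, and the route you sketch is substantially harder than what the paper actually does — and, as written, it is circular.

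The paper's first move in part (ii) is the one you omit: take the trace of \eqref{2.52} on $S$ and subtract the surface equation \eqref{2.53} \emph{before} bringing in Green's third identity. Using the trace relations from Appendix~B ($\gamma^+ V\psi = \mathcal V\psi$, $\gamma^+ W\varphi_0 = -\boldsymbol\mu\varphi_0+\mathcal W\varphi_0$, etc.), the two boundary relations differ exactly by $\boldsymbol\beta\,\gamma^+u = \boldsymbol\beta\,\varphi_0$, and since $\boldsymbol\beta$ is positive definite this gives $\gamma^+u=\varphi_0$ at once. This is crucial: with the Dirichlet data already pinned down, subtracting \eqref{2.52} from the third Green identity \eqref{2.56} leaves only $V(T^+u-\psi)+\mathcal P(Au-f)=0$ in $\Omega$ — the $W$ term cancels. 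The paper then invokes Lemma~6.3 of \cite{CMN-Loc2}, which says precisely that such a relation forces $Au=f$ and $T^+u=\psi$. That lemma is the key external ingredient you never cite.

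By contrast, you subtract the third Green identity from \eqref{2.52} right away, which leaves the three-term relation $V\psi^*-W\varphi^*=\mathcal Pw$ with unknowns $w=Au-f$, $\psi^*$, $\varphi^*$. You then claim one can show $w=0$ by "applying $A$" and analysing remainders, but you acknowledge you don't carry this out — and indeed it is not clear it can be carried out in a clean second-kind form, because the parametrix-based $V$ and $W$ are \emph{not} annihilated by $A$ (you note this yourself parenthetically). Worse, your "cleanest route" starts with "having $Au=f$, the leftover identity becomes $V\psi^*=W\varphi^*$" — but you haven't established $Au=f$; that is exactly what needs to be proved. So the argument as stated is circular. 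Finally, the appeal to Lemma~\ref{l2.1} is superfluous here: in part (ii) the hypothesis already gives $u\in H^{1,0}(\Omega,A)$, so there is nothing to upgrade. In short: take the trace of the volume equation and compare with the boundary equation \emph{first} to get $\gamma^+u=\varphi_0$, and then the residual identity is in the form that Lemma~6.3 of \cite{CMN-Loc2} was designed to handle.
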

\begin{proof} (i) The first part of the theorem is trivial and directly follows form the relations  \eqref{2.28}, \eqref{2.41}, \eqref{2.54}, and  Remark \ref{rBVP}.

(ii) Now, let a pair $(u,\psi)\in H^{1,\,0}(\Omega,A)\times {H}^{-\frac{1}{2}}(S)$ solve
the LBDIE  system  \eqref{2.52}-\eqref{2.53}.
Taking the trace of \eqref{2.52} on  $S$  and  comparing it with  \eqref{2.53}, we get
\begin{eqnarray}
\label{2.55}
\gamma^+ u=\varphi_0 \;\;\text {on}\;\; S.
\end{eqnarray}
Further, since $u \in H^{1,\,0}(\Omega,A)$, we can write  Green's third identity  \eqref{2.28} which in view of \eqref{2.55}
can be rewritten as
\begin{align}
\label{2.56}
&
\displaystyle
(\boldsymbol\beta+\mathcal N){u} -V(T^+u ) =\mathcal{ P}\big(A u\big) -W\varphi_{_0}    \;\;\text{in}\;\; \Omega.
\end{align}
From \eqref{2.52} and \eqref{2.56} it follows that
\begin{eqnarray}
\label{2.57}
V (T^+u-\psi)+ \mathcal{ P} \big(A u-f \big)=0\;\; \text  {in} \;\; \Omega.
\end{eqnarray}
Whence by Lemma 6.3 in \cite{CMN-Loc2} we have
$$
A u=f\;\;\text{in}\;\;\Omega \;\;\;\text{and}\;\;\;
T^+u=\psi\;\;\text{on}\;\;S.
$$
Thus $u$  solves the Dirichlet BVP \eqref{1}-\eqref{2} and equation \eqref{2.54} holds.

The uniqueness of solution to the LBDIE system \eqref{2.52}-\eqref{2.53}
in the space $ H^{1,\,0}(\Omega,A)\times {H}^{-\frac{1}{2}}(S)$
directly follows from the above proved equivalence result and the uniqueness theorem for the Dirichlet problem
\eqref{1}-\eqref{2}, see Remark \ref{rBVP}.
\end{proof}
\section{Symbols and invertibility of a domain operator in the half-space}\label{S-Symb}
In what follows in our analysis, we need the explicit expression of the principal homogeneous symbol matrix
$ {\mathfrak{S}} (\mathcal{N})(y, \xi)$ of the singular integral operator $\mathcal{N}$, which due to
\eqref{2.21}, \eqref{2.21-1d} and \eqref{2.23-1-d}
reads as
\begin{align}
 \big[{\mathfrak{S}} (\mathcal{N})(y, \xi)\big]_{pq}&=
 \big[{\mathfrak{S}} (\mathbf{N})(y, \xi)\big]_{pq}
=
 {\mathcal{F}}_{z\to \xi}\Big[-{\rm{v.p.}}\,\frac{a^{pq}_{kl}(y)}{4\,\pi}
\frac{\partial^2}{\partial z_k\,\partial z_l}\,\frac{1}{|z|}\,\Big]
=-\frac{a^{pq}_{kl}(y)}{4\,\pi}\;{\mathcal{F}}_{z\to \xi}\Big[{\rm{v.p.}} \,\frac{\partial^2}{\partial z_k\,\partial z_l}\,\frac{1}{|z|}\,\Big]
\nonumber\\[2mm]
&
= -\frac{a^{pq}_{kl}(y)}{4\,\pi} \;{\mathcal{F}}_{z\to \xi}\Big[\frac{4\,\pi\, \delta_{kl}}{3}\, \delta(z)
+\frac{\partial^2}{\partial z_k\,\partial z_l}\,\frac{1}{|z|}  \,\Big]
=-\boldsymbol\beta_{pq}(y)- a^{pq}_{kl}(y)(-i\,\xi_k)(-i\,\xi_l)\;{\mathcal{F}}_{z\to \xi}\Big[ \, \frac{1}{4\pi|z|}  \,\Big]
\nonumber\\[2mm]
&
=  \frac{ A_{pq}(y,\xi)}{|\xi|^2}  -\boldsymbol\beta_{pq}(y)\,, 
\quad y\in \overline{\Omega}, \;\;
\xi\in {\mathbb{R}}^3, 
\label{2.34-1}
\end{align}
where
\begin{equation*}
A_{pq} (y,\xi)= a^{pq}_{kl}(y) \,\xi_k \,\xi_l\,, \;\;\;p,q=1,2,3,
\end{equation*}
while the  Fourier transform operator $\mathcal{F}$ is defined as
$$\mathcal{F}g(\xi)={\mathcal{F}}_{z\to \xi}[g(z)]=
\int_{\mathbb{R}^3}g(z)\,e^{i\,z\cdot \xi}\, dz.$$
Here we have applied that ${\mathcal{F}}_{z\to \xi}[(4\pi|z|)^{-1}]=|\xi|^{-2}$ (see, e.g., \cite{Esk}).

As we see the  entries of  principal homogeneous symbol matrix
$ {\mathfrak{S}}(\mathcal{N})(y, \xi)$ of the operator $\mathcal{N}$
are even rational homogeneous functions in $\xi$ of order $0$. It can
easily be verified that both
the characteristic function of the singular kernel in \eqref{2.23-1-d} and
the symbol \eqref{2.34-1} satisfy the Tricomi condition, i.e.,
their integral averages over the unit sphere vanish (cf. \cite{MP}).

Relation \eqref{2.34-1} implies that the principal homogeneous symbols of the singular integral operators
${\bf N}$ and $\boldsymbol\beta+{\bf N}$ read as
\begin{align}
\label{2.38}
&
 {\mathfrak{S}} ( {\bf N})(y, \xi)=  |\xi|^{-2} A(y,\xi)-\boldsymbol\beta \;\;\forall\, y\in \overline{\Omega},
 \; \;\forall\,\xi\in \mathbb{R}^3\setminus\{0\},\\
&
\label{2.39}
 {\mathfrak{S}}( \boldsymbol\beta+{\bf N})(y, \xi)= |\xi|^{-2}  A(y,\xi)
 \;\;\forall\, y\in \overline{\Omega},\;
 \;\;\forall\,\xi\in \mathbb{R}^3\setminus\{0\}.
\end{align}
Due to \eqref{1-d}, the symbol matrix \eqref{2.39}  is positive definite,
\begin{align*}
  [{\mathfrak{S}} ( \boldsymbol\beta+{\bf N})(y, \xi)\, \zeta] \cdot \bar\zeta =
 |\xi|^{-2} \,\bar\zeta\cdot A(y,\xi)\, \zeta  \geq c_1\,|\zeta|^2
 \quad
 \forall\, y\in \overline{\Omega},\;
 \;\;\forall\,\xi\in \mathbb{R}^3\setminus\{0\},
 \;\;\forall\,\zeta\in \mathbb{C}^3, 
\end{align*}
where $c_1$ is the same positive constant as in  \eqref{1-d}.

Denote
\begin{eqnarray*}
\mathbf{B}:=\boldsymbol\beta +{\bf N}.
\end{eqnarray*}
By \eqref{2.39}, the  principal homogeneous symbol matrix of the operator $\mathbf{B}$ reads as
\begin{eqnarray}
\label{ssss}
{\mathfrak{S}} (\mathbf{B})(y,\xi)
= |\xi|^{-2} {A(y,\xi)}
\;\;\;\text{for}\;\;\;y\in \overline{\Omega},\;\;\;\xi\in \mathbb R^3\setminus{\{0\}},
\end{eqnarray}
is an even  rational homogeneous matrix-function of order $0$  in $\xi$ and due to \eqref{1-d} it is positive definite,
\[
[{\mathfrak{S}} (\mathbf{B})(y,\xi)\zeta] \cdot \bar\zeta \geq \,c_1\,|\zeta|^{2}
\;\;\;\text{for all}\;\;\;y\in \overline{\Omega},\;\;\;\xi\in \mathbb R^3\setminus{\{0\}} \text{ and } \zeta\in {\mathbb{C}}^{3}.
 \]
 Consequently, $\mathbf{B}$ is a strongly elliptic
 pseudodifferential operator of zero order
(i.e., Cauchy-type singular integral operator) and the
partial indices of factorization of the symbol \eqref{ssss} equal to zero (cf. \cite{Shar},
\cite{BrSh}, \cite{CD1}).

We need some auxiliary assertions in our further analysis.
 To formulate them, let $\widetilde{y}\in S=\partial \Omega$ be some fixed point
 and consider the frozen symbol
 ${\mathfrak{S}} (\mathbf{\widetilde{B}})(\widetilde{y},\xi)\equiv {\mathfrak{S}} (\mathbf{\widetilde{B}})(\xi)$, where $\mathbf{\widetilde{B}}$ denotes the operator $\mathbf{B}$ written in chosen local co-ordinate system.
 Further, let   $\widehat{\mathbf{\widetilde{B}}}$ denote  the pseudodifferential operator with the symbol
   $$
 {\mathfrak{\widehat{S}}} ({\mathbf{\widetilde{B}}})(\xi^{\,'},\xi_{3})
 :={\mathfrak{S}} ({\mathbf{\widetilde{B}}})\big((1+|\xi^{\,'}|)\omega,\xi_{3}\big),
{\ \rm where\quad } \omega=\frac{\xi^{\,'}}{|\xi^{\,'}|},\quad \xi=(\xi',\xi_3),\quad \xi'=(\xi_1,\xi_2).
  $$

Then the frozen principal homogeneous symbol matrix ${\mathfrak{S}} (\mathbf{\widetilde{B}})( \xi)$ is also the principal homogeneous symbol matrix
 of the operator $\widehat{\mathbf{\widetilde{B}}}$.
{It }can be factorized with respect to the variable $\xi_{3}$ as
\begin{eqnarray}
\label{FF-1}
{\mathfrak{S}} (\mathbf{\widetilde{B}})( \xi)=\mathfrak{S}^{^{(-)}}(\mathbf{\widetilde{B}})( \xi)\,\, \mathfrak{S}^{^{(+)}}(\mathbf{\widetilde{B}})( \xi),
\end{eqnarray}
where
\begin{eqnarray}
\label{FF-1b}
{\mathfrak{S}}^{^{(\pm)}}(\mathbf{\widetilde{B}})( \xi)=
\frac{1}{\Theta^{^{(\pm)}}(\xi^ {'},\xi_3)}\,\widetilde{A}^{^{(\pm)}}(\xi^{\,'},\xi_{3}).
\end{eqnarray}
Here $\Theta^{^{(\pm)}}(\xi^ {'},\xi_3):=\xi_3 \pm i|\xi^{'}| $ are the "plus" and "minus" factors of the symbol $\Theta(\xi):=|\xi|^2$, and $\widetilde{A}^{^{(\pm)}}(\xi^{\,'},\xi_{3})$ are the   "plus" and "minus" polynomial matrix
factors of the first order in $\xi_3$ of the positive definite polynomial
 symbol matrix $\widetilde{A}(\xi^{\,'},\xi_{3})\equiv \widetilde{A}(\widetilde{y,}\,\xi^{\,'},\xi_{3})$ corresponding to the frozen differential operator $A(\widetilde{y},\partial_x)$ at the point $\widetilde{y}\in S$
(see \cite{Ep}, \cite{EJL}, \cite{EpSp}), i.e.
\begin{eqnarray}
\label{Factor-A}
\widetilde{A}(\xi^{'},\xi_{3})=\widetilde{A}^{^{(-)}}(\xi^{\,'},\xi_{3})\,\, \widetilde{A}^{^{(+)}}(\xi^{\,'},\xi_{3})
\end{eqnarray}
with $\det \widetilde{A}^{^{(+)}}(\xi',\tau)\neq 0\,$ for  $\text{Im} \tau >0$\, and
$\det \widetilde{A}^{^{(-)}}(\xi',\tau)\neq 0\,$ for $\text{Im} \tau < 0$. Moreover, the entries of the matrices $\widetilde{A}^{^{(\pm)}}(\xi^{\,'},\xi_{3})$ are homogeneous
functions in $\xi=(\xi',\xi_3)$ of order $1$.

Denote, by $a^{^{(\pm)}}(\xi')$ the coefficients at $\xi_3^3$ in the determinants
$\det \widetilde{A}^{^{(\pm)}}(\xi',\xi_3)$. Evidently,
\begin{eqnarray}
\label{Factor-A-1}
a^{^{(-)}}(\xi')\,a^{^{(+)}}(\xi')=\det \,\widetilde{A}(0,0,1)> 0\;\;\; \text{for} \;\;\;\xi'\neq 0.
\end{eqnarray}

It is easy to see that the factor-matrices $\widetilde{A}^{^{(\pm)}}(\xi',\xi_3)$ have the following structure
\begin{equation*}
\left(\big[\widetilde{A}^{^{(\pm)}}(\xi',\xi_3)\big]^{-1}\right)_{ij} =
\frac{1}{\det \widetilde{A}^{^{(\pm)}}(\xi',\xi_3)}\,
p_{_{ij}}^{^{(\pm)}}(\xi',\xi_3),\quad i,j=1,2,3,
\end{equation*}
where   $p_{_{ij}}^{^{(\pm)}}(\xi',\xi_3)$  are the co-factors of the  matrix  $\widetilde{A}^{^{(\pm)}}(\xi',\xi_3)$, which can be written in the form
\begin{eqnarray}
\label{co-factors}
p_{_{ij}}^{^{(\pm)}}(\xi',\xi_3)=c_{_{ij}}^{^{(\pm)}}(\xi')\,\xi_3^{2}+
b_{_{ij}}^{^{(\pm)}}(\xi')\,\xi_3+d_{_{ij}}^{^{(\pm)}}(\xi').
\end{eqnarray}
Here $c_{_{ij}}^{^{(\pm)}}$,  \;    $b_{_{ij}}^{^{(\pm)}}$ \; and \; $d_{_{ij}}^{^{(\pm)}}$, $i,j=1,2,3$,
are  homogeneous functions in $\xi'$
 of  order $0$, $1$,  and  $2$,  respectively.

 From the above mentioned it follows that the entries of the factor-symbol
 matrices
$
\mathfrak{b}_{kj}^{^{(\pm)}}(\omega, r, \xi_3)
 :={\mathfrak{S}}^{^{(\pm)}}_{kj}(\mathbf{\widetilde{B}})(\xi',\xi_3)
 $,
 ${k,j}= 1,2,3 $,
 with $\omega=\xi'/|\xi'|$ and  $r=|\xi'|$,
 satisfy the following relations:
 \begin{equation}
\label{class-D-1}
 \frac{\partial^l \mathfrak{b}_{kj}^{^{(\pm)}}(\omega, 0, -1)}  {\partial r^l}
 =(-1)^l\, \frac{\partial^l \mathfrak{b}_{kj}^{^{(\pm)}}(\omega, 0, +1)}  {\partial r^l}\,,
 \;\;\;\;l=0,1,2, \dots
 \end{equation}
These relations imply that the entries of the matrices
${\mathfrak{S}}^{^{(\pm)}}(\mathbf{\widetilde{B}})(\xi',\xi_3)$ belong to the class of symbols
 $D_0$ introduced in \cite{Esk}, Ch. III, $\S \,10,$
 \begin{equation}
\label{class-D-2}
 {\mathfrak{S}}^{^{(\pm)}}(\mathbf{\widetilde{B}})(\xi',\xi_3)\in D_0.
 \end{equation}
Denote by   $\Pi^\pm$  the Cauchy type integral operators
\begin{equation}\label{l7a}
\Pi^\pm h(\xi):=\pm \,\frac{i}{2\pi}\;\lim\limits_{t\to 0+}\;
\int_{-\infty}^{+\infty}\frac{h(\xi',\eta_3)\,d\eta_3}{\xi_3\pm i\,t-\eta_3},
\;\;\;
\end{equation}
which are  well defined at any $\xi\in\mathbb R^3$ for a bounded smooth function $h(\xi',\cdot)$ satisfying the relation $h(\xi',\eta_3)=\mathcal{O}(1+|\eta_3|)^{-\kappa}$ with some $\kappa > 0$.

Let
 $\mathring E_{+}$ be  the extension operator by zero from $\mathbb{R}^{3}_{+}$
onto the whole space $\mathbb{R}^{3}$
and $r_+:=r_{_{\mathbb{R}^{3}_{+}}}:  H^{s}(\mathbb{R}^{3})\to H^{s}(\mathbb{R}^{3}_+)$ be the restriction operator to the half-space $\mathbb{R}^{3}_{+}$.
First we prove the following assertion.
\begin{lem}
\label{L3.1}
Let $s\geq 0$ and $\chi \in X^{k}_{+}$ with integer $k\ge 2$.
The operator
    $$
   r_+ \widehat{\mathbf{\widetilde{B}}}\mathring E_{+}\; : \; H^{s}(\mathbb{R}^{3}_{+})\to H^{s}(\mathbb{R}^{3}_{+})
    $$
 is invertible.

Moreover, for $f\in H^{s}(\mathbb{R}^{3}_{+})$, the unique solution of the equation
\begin{eqnarray}
\label{l1}
r_+ \widehat{\mathbf{\widetilde{B}}}\mathring E_{+} u=f
\end{eqnarray}
for $u\in H^{s}(\mathbb{R}^{3}_{+})$ can be represented in the form $ u = r_+u_+$, where
\begin{equation*}
u_+ = \mathring Eu=\mathcal{F}^{-1}\Big\{[{\mathfrak{\widehat{S}}}^{^{(+)}}({\widetilde{\mathbf{B}}})]^{-1}\Pi^{+}
\Big([{\mathfrak{\widehat{S}}}^{^{(-)}}({\widetilde{\mathbf{B}}})]^{-1}\mathcal{F}(f_*)\Big) \Big\}\, ,
\end{equation*}
and $f_*\in H^{s}(\mathbb{R}^{3})$ is an extension of $f\in H^{s}(\mathbb{R}^{3}_+)$ (i.e. $r_+f_*=f$) such that $\|f_*\|_{H^{s}(\mathbb{R}^{3})}=\|f\|_{H^{s}(\mathbb{R}^{3}_+)}$.
\end{lem}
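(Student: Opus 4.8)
The strategy is the classical Wiener--Hopf / Vishik--Eskin argument for a strongly elliptic pseudodifferential operator with $D_0$-symbol factors. First I would reduce the operator to its principal part plus a smoothing remainder: since $\mathfrak{S}(\widehat{\mathbf{\widetilde{B}}})(\xi',\xi_3)=|\xi|^{-2}\widetilde A(\xi',\xi_3)$ is a $0$-order rational symbol, $\widehat{\mathbf{\widetilde{B}}}$ differs from the operator with exactly that symbol by an operator of order $-1$ (the terms $R(x,y)$, $R^{(1)}(x,y)$ in \eqref{2.23-1-d}--\eqref{2.25-1ddd} have weak singularity $\mathcal O(|x-y|^{-2})$ and gain one derivative, cf.\ \eqref{2.40}). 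Because the problem is posed on the half-space and on a \emph{compact} scale (the operator $r_+\widehat{\mathbf{\widetilde{B}}}\mathring E_+$ acts locally near the boundary point $\widetilde y$), it suffices to prove invertibility for the operator whose symbol is the frozen homogeneous symbol $\mathfrak{S}(\widehat{\mathbf{\widetilde{B}}})$; a Neumann-series / compact-perturbation argument then transfers invertibility to the full operator. (Alternatively one may keep the remainder and invoke the Fredholm theory plus injectivity as in the surrounding analysis, but for this lemma the clean statement is for the model operator, and the explicit solution formula only uses the factorization of $\mathfrak{S}(\widehat{\mathbf{\widetilde{B}}})$.)

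Second, I would carry out the explicit Wiener--Hopf factorization. Write $\mathfrak{S}(\widehat{\mathbf{\widetilde{B}}})=\mathfrak{S}^{(-)}(\widehat{\mathbf{\widetilde{B}}})\,\mathfrak{S}^{(+)}(\widehat{\mathbf{\widetilde{B}}})$ exactly as in \eqref{FF-1}--\eqref{FF-1b}, where the plus/minus factors come from combining the scalar factorization $|\xi|^2=\Theta^{(-)}\Theta^{(+)}$ with the matrix factorization \eqref{Factor-A} of the positive-definite polynomial symbol $\widetilde A$. By \eqref{class-D-2} both factors lie in the Eskin symbol class $D_0$, hence $\mathfrak{S}^{(\pm)}(\widehat{\mathbf{\widetilde{B}}})$ and their inverses are multipliers between the relevant Bessel-potential spaces, and—crucially—$\mathcal F^{-1}\mathfrak{S}^{(+)}(\widehat{\mathbf{\widetilde{B}}})\mathcal F$ preserves $\widetilde H^s(\mathbb R^3_+)$ while $\mathcal F^{-1}\mathfrak{S}^{(-)}(\widehat{\mathbf{\widetilde{B}}})\mathcal F$ preserves the space of distributions supported in the complementary half-space, by the standard Paley--Wiener characterization of the classes $D_0$. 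Then for $f_*\in H^s(\mathbb R^3)$ extending $f$ one checks directly that
\begin{equation*}
u_+ = \mathcal F^{-1}\Big\{[\widehat{\mathfrak S}^{(+)}(\widetilde{\mathbf B})]^{-1}\,\Pi^+\big([\widehat{\mathfrak S}^{(-)}(\widetilde{\mathbf B})]^{-1}\mathcal F f_*\big)\Big\}
\end{equation*}
belongs to $\widetilde H^s(\mathbb R^3_+)$: the inner factor $[\widehat{\mathfrak S}^{(-)}]^{-1}\mathcal F f_*$ is split by $\Pi^++\Pi^-=\mathrm{Id}$ into a "$+$" part (Fourier transform of something supported in $\overline{\mathbb R^3_+}$) and a "$-$" part, and multiplication by $[\widehat{\mathfrak S}^{(+)}]^{-1}$ keeps the $+$ support property. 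Applying $r_+\widehat{\mathbf{\widetilde{B}}}\mathring E_+$ to $r_+u_+$ and using $\widehat{\mathbf{\widetilde{B}}}=\mathcal F^{-1}\mathfrak S^{(-)}\mathfrak S^{(+)}\mathcal F$ together with $r_+\mathcal F^{-1}\mathfrak S^{(-)}\mathcal F\,(\text{"}-\text{" term})=0$ gives $r_+\widehat{\mathbf{\widetilde{B}}}\mathring E_+ r_+u_+ = r_+ f_* = f$, proving surjectivity and the solution formula at once.

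Third, uniqueness: if $r_+\widehat{\mathbf{\widetilde{B}}}\mathring E_+ u=0$ with $u\in H^s(\mathbb R^3_+)$, then $\widehat{\mathbf{\widetilde{B}}}\mathring Eu$ is supported in $\overline{\mathbb R^3_-}$; writing $\widehat{\mathbf{\widetilde{B}}}\mathring Eu=\mathcal F^{-1}\mathfrak S^{(-)}\mathfrak S^{(+)}\mathcal F\mathring Eu$ and applying $\Pi^+$ after multiplying by $[\mathfrak S^{(-)}]^{-1}$, one finds that $\mathfrak S^{(+)}\mathcal F\mathring Eu$ has vanishing "$+$" projection; but $\mathring Eu$ is already a "$+$" distribution and $\mathfrak S^{(+)}\in D_0$ maps "$+$" to "$+$", so $\mathfrak S^{(+)}\mathcal F\mathring Eu$ is "$+$", hence equals its own $\Pi^+$-projection, which is zero; since $\mathfrak S^{(+)}$ is invertible this forces $\mathring Eu=0$. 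Together with the strong ellipticity/positive-definiteness \eqref{ssss} that guarantees the partial indices of factorization vanish (so no finite-dimensional kernel or cokernel obstruction appears and $f_*$ may be chosen norm-preserving), this yields invertibility with bounded inverse. The main obstacle is the careful bookkeeping of the half-space support/mapping properties of the $D_0$-factors $\mathfrak S^{(\pm)}(\widehat{\mathbf{\widetilde{B}}})$ — i.e.\ verifying that multiplication by these matrix symbols respects the Hardy-type decomposition $H^s(\mathbb R^3)=\widetilde H^s(\mathbb R^3_+)\oplus\{\text{"}-\text{" distributions}\}$ uniformly in $s\ge0$ — which is exactly the content of Eskin's theory (\cite{Esk}, Ch.~III, §10) that the relations \eqref{class-D-1}--\eqref{class-D-2} were set up to invoke; the rest is formal manipulation of the Wiener--Hopf formula.
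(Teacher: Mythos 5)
Your second and third steps correctly reproduce the paper's Wiener--Hopf argument: factorize $\widehat{\mathfrak S}(\widetilde{\mathbf B})=\widehat{\mathfrak S}^{(-)}\widehat{\mathfrak S}^{(+)}$, use the $D_0$-membership of the factors (via \eqref{class-D-1}--\eqref{class-D-2}) to invoke Eskin's mapping and Paley--Wiener-type theorems (\cite{Esk}, Theorem 4.4, Lemmas 20.2, 20.5), split with $\Pi^+ + \Pi^- = \mathrm{Id}$, verify that the displayed formula lands in $\widetilde H^s(\mathbb R^3_+)$ and reproduces $f$ under $r_+\widehat{\mathbf{\widetilde B}}$, and handle general $s\ge 0$ by the uniform $D_0$ estimates (Theorem 10.1 in \cite{Esk}). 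That is exactly the engine of the paper's proof. Your uniqueness argument is a minor variant: you show directly that the homogeneous equation forces $\mathring E u = 0$ by a support/Hardy-projection argument, whereas the paper derives, for any solution, the explicit representation of $u_+$ from the uniqueness of the $\Pi^\pm$ splitting (Eskin's Lemma 5.4) and then sets $f=0$; both are sound and essentially equivalent, the paper's being slightly more economical because it establishes the solution formula and uniqueness in one pass.

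The one genuine flaw is your opening reduction step. You propose to pass from $\widehat{\mathbf{\widetilde B}}$ to ``the operator whose symbol is the frozen homogeneous symbol'' and then transfer invertibility back ``by a Neumann-series / compact-perturbation argument.'' This is a misreading of the objects involved: $\widehat{\mathbf{\widetilde B}}$ \emph{already is} the frozen model operator, defined with the non-homogeneous symbol $\widehat{\mathfrak S}(\widetilde{\mathbf B})(\xi',\xi_3) = \mathfrak S(\widetilde{\mathbf B})\big((1+|\xi'|)\omega,\xi_3\big)$. There is no ``full operator'' to transfer invertibility to at this stage; the reduction of $\mathbf B$ to this model is carried out separately in the local-principle argument of Lemma~\ref{Step 2} via the decomposition \eqref{o-15}, not inside Lemma~\ref{L3.1}. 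Moreover, the transfer device you suggest would not work as stated: the difference between $\widehat{\mathfrak S}(\widetilde{\mathbf B})$ and the homogeneous symbol is of lower order, not small in operator norm, and on the non-compact domain $\mathbb R^3_+$ a lower-order pseudodifferential operator is not automatically compact, so neither Neumann series nor compact-perturbation Fredholm stability applies directly. Fortunately this paragraph is dispensable; once it is deleted, your remaining argument is the paper's proof.
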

\begin{proof}
First,  we show that if $f\in  H^{0}(\mathbb{R}^{3}_{+})$, then equation \eqref{l1} is uniquely solvable in the space  $H^{0}(\mathbb{R}^{3}_{+})$.
Let $u\in {H}^{0}(\mathbb{R}^{3}_{+})$ be  a solution  of this equation
and let us denote
\begin{eqnarray}\label{l2}
u_{-}:=f_*-\widehat{\mathbf{\mathbf{\widetilde{B}}}}u_{+},
\end{eqnarray}
where $u_{+}:=\mathring E_{+}u\in \widetilde{H}^{0}(\mathbb{R}^{3}_{+})$ and
$f_*\in H^{0}(\mathbb{R}^{3})$  is an arbitrary extension of
$ f\in H^{0}(\mathbb{R}^{3}_{+})$  onto  $\mathbb{R}^{3}_{+}$
such that
$
\|f_*\|_{{H}^{0}(\mathbb{R}^{3})} =\| f\|_{{H}^{0}(\mathbb{R}^{3}_{+})}.
$
Since $f_* \in H^{0}(\mathbb{R}^{3})$  and  $\widehat{\mathbf{\widetilde{B}}}u_{+}\in
 H^{0}(\mathbb{R}^{3})$, we have  $u_{-}\in H^{0}(\mathbb{R}^{3})$.
 In addition, $u_{-}\in \widetilde{H}^{0}(\mathbb{R}^{3}_{-})$.

The Fourier transform of \eqref{l2} leads to the following relation
\begin{eqnarray}\label{l3}
{\mathfrak{\widehat{S}}} ({\mathbf{\widetilde{B}}})(\xi)\mathcal{F}(u_{+})
+\mathcal{F}(u_{-})(\xi)=\mathcal{F}(f_*)(\xi).
\end{eqnarray}
Due to \eqref{FF-1} we have the following factorization
\begin{eqnarray}
\label{FF-2}
{\mathfrak{\widehat{S}}} ({\mathbf{\widetilde{B}}})(\xi^{\,'},\xi_{3})
={\mathfrak{\widehat{S}}}^{^{(-)}}({\mathbf{\widetilde{B}}})(\xi^{\,'},\xi_{3})\,\,
{\mathfrak{\widehat{S}}}^{^{(+)}}({\mathbf{\widetilde{B}}})(\xi^{\,'},\xi_{3}),
\end{eqnarray}
where ${\mathfrak{\widehat{S}}}^{^{(\pm)}}({\mathbf{\widetilde{B}}})(\xi^{\,'},\xi_{3})
={\mathfrak{S}}^{^{(\pm)}}({\mathbf{\widetilde{B}}})\big((1+|\xi^{\,'}|)\omega,\xi_{3}\big)$ with $\omega=\frac{\xi^{\,'}}{|\xi^{\,'}|}$.
Substituting \eqref{FF-2} into \eqref{l3}  and multiplying both sides by
$[{\mathfrak{\widehat{S}}}^{^{(-)}}({\mathbf{\widetilde{B}}})]^{-1}$,  we get
\begin{align}
{\mathfrak{\widehat{S}}}^{^{(+)}}({\mathbf{\widetilde{B}}})(\xi)\,\mathcal{F}(u_{+})(\xi)+
[{\mathfrak{\widehat{S}}}^{^{(-)}}({\mathbf{\widetilde{B}}})(\xi)]^{-1}\,\mathcal{F}(u_{-})(\xi) 
=[{\mathfrak{\widehat{S}}}^{^{(-)}}({\mathbf{\widetilde{B}}})(\xi))]^{-1}\,\mathcal{F}(f_*)(\xi). 
\label{l4}
\end{align}
Introduce the notations
\begin{align}
\label{l5}
v_{+}(x)&=\mathcal{F}^{-1}_{\xi\rightarrow x}\Big({\mathfrak{\widehat{S}}}^{^{(+)}}({\mathbf{\widetilde{B}}})(\xi)\,\mathcal{F}(u_{+})(\xi)\Big),\\
\label{ll5}
v_{-}(x)&=\mathcal{F}^{-1}_{\xi\rightarrow x}\Big([{\mathfrak{\widehat{S}}}^{^{(-)}}({\mathbf{\widetilde{B}}})(\xi)]^{-1}\,\mathcal{F}(u_{-})(\xi)\Big),\\
\label{lll5}
g(x)&=\mathcal{F}^{-1}_{\xi\rightarrow x}\Big([{\mathfrak{\widehat{S}}}^{^{(-)}}({\mathbf{\widetilde{B}}})(\xi)]^{-1}\,\mathcal{F}(f_*)(\xi)\Big).
\end{align}
Then we can conclude that (see \cite{Esk}, Theorem 4.4 and Lemmas 20.2, 20.5)
\begin{equation}
\label{l6-1}
v_{+}\in \widetilde{H}^{0}(\mathbb{R}^{3}_{+}),\quad v_{-}
\in \widetilde{H}^{0}(\mathbb{R}^{3}_{-}),\quad g\in H^{0}(\mathbb{R}^{3}),
\end{equation}
since the degrees of homogeneity of ${\mathfrak{S}}^{^{(+)}}({\mathbf{\widetilde{B}}})(\xi)$  and  ${\mathfrak{S}}^{^{(-)}}({\mathbf{\widetilde{B}}})(\xi)$  equal  to $0$.

In terms of notations \eqref{l5}-\eqref{lll5}, equation \eqref{l4} acquires the form
\begin{eqnarray}
\label{l6}
\mathcal{F}(v_{+})(\xi)+\mathcal{F}(v_{-})(\xi)=\mathcal{F}(g)(\xi).
\end{eqnarray}
In accordance with Lemma 5.4 in \cite{Esk}, we conclude that
the representation of the vector-function $\mathcal{F}(g)(\xi)$  in the form \eqref{l6}
is unique in view of inclusions \eqref{l6-1} which in turn
leads to the following relations
\begin{eqnarray}\label{l7}
\mathcal{F}(v_{+})=\Pi^{+}\mathcal{F}(g),\qquad \mathcal{F}(v_{-})=\Pi^{-}\mathcal{F}(g).
\end{eqnarray}
Now, from  \eqref{l5}, \eqref{lll5} and the first equation in \eqref{l7} it follows that  $u_{+}\in {\widetilde{H}}^{0}(\mathbb{R}^{3}_{+})$    is representable in the form
\begin{equation}
\label{l8}
u_{+}=\mathcal{F}^{-1}\Big\{[{\mathfrak{\widehat{S}}}^{^{(+)}}({\mathbf{\widetilde{B}}})]^{-1}\Pi^{+}
\Big([{\mathfrak{\widehat{S}}}^{^{(-)}}({\mathbf{\widetilde{B}}})]^{-1}\mathcal{F}(f_*)\Big) \Big\}\,.
\end{equation}
Evidently,  for the solution $u \in  H ^{0}(\mathbb{R}^{3}_{+})$ of equation \eqref{l1} then we get the following representation
\begin{equation}
\label{l8-1}
u =r_+ \mathcal{\mathcal{F}}^{-1}\Big\{[{\mathfrak{\widehat{S}}}^{^{(+)}}({\mathbf{\widetilde{B}}})]^{-1}\Pi^{+}
\Big([{\mathfrak{\widehat{S}}}^{^{(-)}}({\mathbf{\widetilde{B}}})]^{-1}\mathcal{F}(f_*)\Big) \Big\}\,.
\end{equation}
Note that the  representation \eqref{l8-1} does not depend on the choice of
the extension $f_*$. Indeed, let $f_{*1} \in {H}^{0}(\mathbb{R}^{3})$
be another extension of $ f \in {H}^{0}(\mathbb{R}^{3}_{+})$, i.e.,
$r_{+}f_{*1}=f$. Since  $f_{-}=f_*-f_{*1} \in
 {\widetilde{H}}^{0}(\mathbb{R}^{3}_{-})$,   it follows that (see \cite{Esk},
Theorem 4.4, Lemmas 20.2 and 20.5)
\begin{eqnarray*}
\mathcal{F}^{-1}\Big([{\mathfrak{\widehat{S}}}^{^{(-)}}({\mathbf{\widetilde{B}}})]^{-1}\mathcal{F}( f_{-})\Big)\in {\widetilde{H}}^{0}(\mathbb{R}^{3}_{-}),
\end{eqnarray*}
while
$$
\Pi^{+}\Big\{[{\mathfrak{\widehat{S}}}^{^{(-)}}({\mathbf{\widetilde{B}}})]^{-1}
\mathcal{F}( f_{-})\Big\}=\mathcal{F}\Big\{\theta^{+} \mathcal{F}^{-1}\Big([{\mathfrak{\widehat{S}}}^{^{(-)}}({\mathbf{\widetilde{B}}})]^{-1}\mathcal{F}( f_{-})\Big) \Big\}=0
$$
(cf. \cite{Esk}, Lemma 5.2). Here $\theta^{+}$ denotes the   multiplication operator by the Heaviside step function $\theta(x_{3})$ that is equal to 1 for $x_{3}>0$
and  vanishes for $x_{3}<0$.
Therefore
$$
\Pi^{+}\Big([{\mathfrak{\widehat{S}}}^{^{(-)}}({\mathbf{\widetilde{B}}})]^{-1}\mathcal{F}(f_*)\Big)=\Pi^{+}\Big([{\mathfrak{\widehat{S}}}^{^{(-)}}({\mathbf{\widetilde{B}}})]^{-1}\mathcal{F}(f_{*1})\Big)
$$
and the claim follows.
If, in particular, $f=0$, then $f_*=0$, and hence $u=0$
by virtue of \eqref{l8}. Thus,  equation \eqref{l1} possesses at most one solution in the space ${H}^{0}(\mathbb{R}^{3}_{+})$.

Further, we show that the function
\begin{equation}
\label{l-9}
u =r_+ \mathcal{F}^{-1}\Big\{[{\mathfrak{\widehat{S}}}^{^{(+)}}({\mathbf{\widetilde{B}}})]^{-1}\Pi^{+}
\Big([{\mathfrak{\widehat{S}}}^{^{(-)}}({\mathbf{\widetilde{B}}})]^{-1}\mathcal{F}(f_*)\Big) \Big\}
\end{equation}
is a solution of equation \eqref{l1} for any  $f\in {H}^{0}(\mathbb{R}^{3}_{+})$.
To this end, let us first note that for
the vector-function under the restriction operator in \eqref{l-9}, the following embedding holds
\begin{equation}
\label{l-9-1}
 \mathcal{F}^{-1}\Big\{[{\mathfrak{\widehat{S}}}^{^{(+)}}({\mathbf{\widetilde{B}}})]^{-1}\Pi^{+}
\Big([{\mathfrak{\widehat{S}}}^{^{(-)}}({\mathbf{\widetilde{B}}})]^{-1}\mathcal{F}(f_*)\Big) \Big\}
\in \widetilde{H}^{0}(\mathbb{R}^{3}_{+}).
\end{equation}
Indeed, by Lemma 5.2 in \cite{Esk}, we have
\begin{eqnarray*}
 \mathcal{F}^{-1}\Big\{[{\mathfrak{\widehat{S}}}^{^{(+)}}({\mathbf{\widetilde{B}}})]^{-1}\Pi^{+}
\Big([{\mathfrak{\widehat{S}}}^{^{(-)}}({\mathbf{\widetilde{B}}})]^{-1}\mathcal{F}(f_*)\Big) \Big\}
 =\mathcal{F}^{-1}\Big\{[{\mathfrak{\widehat{S}}}^{^{(+)}}({\mathbf{\widetilde{B}}})]^{-1}
\mathcal{F}\Big[\theta^+ \mathcal{F}^{-1}
\Big([{\mathfrak{\widehat{S}}}^{^{(-)}}({\mathbf{\widetilde{B}}})]^{-1}\mathcal{F}(f_*)\Big) \Big]\Big\}
 \end{eqnarray*}
 and \eqref{l-9-1} follows from Theorem 4.4, Lemmas 20.2 and 20.5 in \cite{Esk}.
From \eqref{l-9} and \eqref{l-9-1} we obtain
 \begin{equation}
\label{l-9-2dd-1}
 u_+:=\mathring E_{+} u = \mathcal{F}^{-1}\Big\{[{\mathfrak{\widehat{S}}}^{^{(+)}}({\mathbf{\widetilde{B}}})]^{-1}\Pi^{+}
\Big([{\mathfrak{\widehat{S}}}^{^{(-)}}({\mathbf{\widetilde{B}}})]^{-1}\mathcal{F}(f_*)\Big) \Big\}\,.
\end{equation}
By the relation
\begin{gather*}
\Pi^{+}\Big([{\mathfrak{\widehat{S}}}^{^{(-)}}({\mathbf{\widetilde{B}}})]^{-1}\mathcal{F}(f_*)\Big) 
=[{\mathfrak{\widehat{S}}}^{^{(-)}}({\mathbf{\widetilde{B}}})]^{-1}\mathcal{F}(f_*)-\Pi^{-}\Big([{\mathfrak{\widehat{S}}}^{^{(-)}}({\mathbf{\widetilde{B}}})]^{-1}\mathcal{F}(f_*)\Big)
\end{gather*}
(see Lemma 5.4 in \cite{Esk}), we  get from equality \eqref{l-9-2dd-1},
\begin{align*}
{\mathfrak{\widehat{S}}}({\mathbf{\widetilde{B}}})\mathcal{F}(u_{+})& =
{\mathfrak{\widehat{S}}}^{^{(-)}}({\mathbf{\widetilde{B}}})\Pi^{+}
\Big([{\mathfrak{\widehat{S}}}^{^{(-)}}({\mathbf{\widetilde{B}}})]^{-1}\mathcal{F}(f_*)\Big)
 =
\mathcal{F}(f_*)-{\mathfrak{\widehat{S}}}^{^{(-)}}({\mathbf{\widetilde{B}}})
\,\Pi^{-}\Big([{\mathfrak{\widehat{S}}}^{^{(-)}}({\mathbf{\widetilde{B}}})]^{-1}\mathcal{F}(f_*)\Big).
\end{align*}
Since
$$
\mathcal{F}^{-1}\Big\{{\mathfrak{\widehat{S}}}^{^{(-)}}({\mathbf{\widetilde{B}}})\,\Pi^{-}
\Big([{\mathfrak{\widehat{S}}}^{^{(-)}}({\mathbf{\widetilde{B}}})]^{-1}\mathcal{F}(f_*)\Big)\Big\}\in {\widetilde{H}}^{0}(\mathbb{R}^{3}_{-}),
$$
(cf. \cite{Esk}, Theorems 4.4, 5.1, Lemmas 20.2, 20.5 ),  we easily derive
\begin{align*}
r_+\widehat{\mathbf{\widetilde{B}}}\,u_+ & =r_+(f_*)-r_+\mathcal{F}^{-1}\Big\{{\mathfrak{\widehat{S}}}^{^{(-)}}({\mathbf{\widetilde{B}}})
\,\Pi^{-}
\Big([{\mathfrak{\widehat{S}}}^{^{(-)}}({\mathbf{\widetilde{B}}})]^{-1}\mathcal{F}(f_*)\Big)\Big\}
 =
r_+(f_*)=f,
\end{align*}
i.e., the vector-function \eqref{l-9} solves equation \eqref{l1} and belongs to the space ${H}^{0}(\mathbb{R}^{3}_{+})$ for $f\in {H}^{0}(\mathbb{R}^{3}_+)$.

In what follows we prove that for $f\in {H}^{s}(\mathbb{R}^{3}_+)$ and
 $f_*\in {H}^{s}(\mathbb{R}^{3})$
such that
\begin{eqnarray}
\label{10p}
\|f_*\|_{{H}^{s}(\mathbb{R}^{3})} =\| f\|_{{H}^{s}(\mathbb{R}^{3}_{+})}
\;\;\text{for}\;\;\; s\geq 0,
\end{eqnarray}
the vector-function defined by \eqref{l-9} satisfies the inequality
\begin{eqnarray}
\label{10pp}
\|u \|_{{H}^{s}(\mathbb{R}^{3}_{+})}\leq C\,\,\| f\|_{{H}^{s}(\mathbb{R}^{3}_{+})},
\end{eqnarray}
and hence belongs to  ${H}^{s}(\mathbb{R}^{3}_{+})$.
Indeed, since   by Lemma 5.2 and Theorem 5.1 in \cite{Esk} 
$$
\Pi^{+}(\mathcal{F}g)=\mathcal{F}(\theta^{+} g) \;\; \text{for all} \;\;g\in  H^0(\mathbb{R}^3) ,
$$
then representation \eqref{l-9-2dd-1} of  $u_{+}$  can be rewritten as
\begin{eqnarray*}
u_{+}=\mathcal{F}^{-1}\Big\{[{\mathfrak{\widehat{S}}}^{^{(+)}}
({\mathbf{\widetilde{B}}})]^{-1}\mathcal{F}\Big[\theta^{+}\mathcal{F}^{-1}
\Big([{\mathfrak{\widehat{S}}}^{^{(-)}}({\mathbf{\widetilde{B}}})]^{-1}\mathcal{F}(f_*)\Big)\Big] \Big\}.
\end{eqnarray*}

Therefore, using  \eqref{10p} and  in view of \eqref{class-D-2}, from Theorem 10.1, Lemmas 4.4, 20.2, and 20.5  in \cite{Esk} we finally derive
\begin{align*}
\|u \|_{{H}^{s}(\mathbb{R}^{3}_{+})}&\leq c_1 \,\,\left\|\mathcal{F}^{-1}\Big([{\mathfrak{\widehat{S}}}^{^{(-)}}
({\mathbf{\widetilde{B}}})]^{-1}\mathcal{F}(f_*)\Big)\right \|_{{H}^{s}(\mathbb{R}^{3}_{+})}
\leq c_1\,\,\left\|\mathcal{F}^{-1}\Big([{\mathfrak{\widehat{S}}}^{^{(-)}}
({\mathbf{\widetilde{B}}})]^{-1}\mathcal{F}(f_*)\Big)\right \|_{{H}^{s}(\mathbb{R}^{3})}
\leq c_2 \,
\|f_* \|_{{H}^{s}(\mathbb{R}^{3})} = c_2\,\| f \|_{{H}^{s}(\mathbb{R}^{3}_{+})}
\end{align*}
with some positive constants $c_1$ and $c_2$, whence \eqref{10pp}  follows.
\end{proof}

\begin{lem}
\label{L3.2}
Let the factor matrix $\widetilde{A}^{^{(+)}}(\xi',\tau)$ be as in \eqref{Factor-A}, and
 $a^{^{(+)}}$ and  $c_{_{ij}}^{^{(+)}}$ be as in \eqref{Factor-A-1} and \eqref{co-factors}
 respectively.
Then the following equality holds
\begin{eqnarray}
\label{cont-1}
\frac{1}{2\pi i} \int_{\Gamma^-}\big[\widetilde{A}^{^{(+)}}(\xi',\tau)\big]^{-1}d\tau=
\frac{1}{a^{^{(+)}}(\xi')}C^{^{(+)}}(\xi')\,,
\end{eqnarray}
where $ C^{^{(+)}}(\xi')=\big[\, c_{_{ij}}^{^{(+)}}(\xi')\,\big]_{ij=1}^3$ and
$\det \,[\,C^{^{(+)}}(\xi')\,]\neq 0$ for $\xi'\neq 0.$
Here $\Gamma^{-}$ is a contour in the lower complex half-plane enclosing all the roots of the polynomial $\det \widetilde{A}^{^{(+)}}(\xi',\tau)$ with respect to $\tau$.
\end{lem}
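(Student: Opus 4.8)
The plan is to combine the cofactor representation of $\big[\widetilde{A}^{^{(+)}}(\xi',\tau)\big]^{-1}$ with a residue computation (a Laurent expansion at infinity), and then to read off the invertibility of $C^{^{(+)}}$ from the fact that $\widetilde{A}^{^{(+)}}$ is of the first order in $\tau$. First I would record the elementary facts needed: by \eqref{Factor-A-1} the leading $\tau$-coefficient $a^{^{(+)}}(\xi')$ of the degree-$3$ polynomial $\det\widetilde{A}^{^{(+)}}(\xi',\tau)$ is nonzero for $\xi'\neq0$; by the strong ellipticity \eqref{1-d} the matrix $\widetilde{A}(\widetilde{y},\xi',\tau)$ is positive definite for real $(\xi',\tau)\neq0$, so $\det\widetilde{A}^{^{(+)}}(\xi',\tau)\,\det\widetilde{A}^{^{(-)}}(\xi',\tau)=\det\widetilde{A}(\widetilde{y},\xi',\tau)\neq0$ for real $\tau$ and $\xi'\neq0$; hence, together with the defining property $\det\widetilde{A}^{^{(+)}}(\xi',\tau)\neq0$ for $\operatorname{Im}\tau>0$, all three $\tau$-roots of $\det\widetilde{A}^{^{(+)}}(\xi',\cdot)$ lie in $\{\operatorname{Im}\tau<0\}$ and are therefore enclosed by $\Gamma^-$.

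Next, using \eqref{co-factors}, $\big[\widetilde{A}^{^{(+)}}(\xi',\tau)\big]^{-1}_{ij}=p_{_{ij}}^{^{(+)}}(\xi',\tau)\big/\det\widetilde{A}^{^{(+)}}(\xi',\tau)$ is a proper rational function of $\tau$ (numerator of degree $2$, denominator of degree $3$) whose only poles are the three roots just described. Deforming $\Gamma^-$ to a large circle $\{|\tau|=R\}$ — legitimate since no pole lies in the region between the two contours and the integrand is holomorphic there — and integrating term by term the expansion $\big[\widetilde{A}^{^{(+)}}(\xi',\tau)\big]^{-1}_{ij}=\dfrac{c_{_{ij}}^{^{(+)}}(\xi')}{a^{^{(+)}}(\xi')\,\tau}+\mathcal{O}(|\tau|^{-2})$ valid as $\tau\to\infty$, one obtains $\frac{1}{2\pi i}\int_{\Gamma^-}\big[\widetilde{A}^{^{(+)}}(\xi',\tau)\big]^{-1}_{ij}\,d\tau=c_{_{ij}}^{^{(+)}}(\xi')/a^{^{(+)}}(\xi')$, which is exactly \eqref{cont-1}.

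For the non-degeneracy of $C^{^{(+)}}$, I would use that $\widetilde{A}^{^{(+)}}$ is of the first order in $\xi_3=\tau$, i.e. $\widetilde{A}^{^{(+)}}(\xi',\tau)=\tau\,E(\xi')+G(\xi')$ with $E(\xi')$ homogeneous of degree $0$ in $\xi'$. Each $2\times2$ minor of $\tau E(\xi')+G(\xi')$ is a quadratic polynomial in $\tau$ whose $\tau^2$-coefficient is the corresponding $2\times2$ minor of $E(\xi')$; hence the $\tau^2$-coefficient of the cofactor matrix $\big[p_{_{ij}}^{^{(+)}}(\xi',\tau)\big]$, which by \eqref{co-factors} is precisely $C^{^{(+)}}(\xi')=\big[c_{_{ij}}^{^{(+)}}(\xi')\big]$, equals the adjugate $\operatorname{adj}E(\xi')$, while the $\tau^3$-coefficient of $\det\widetilde{A}^{^{(+)}}(\xi',\tau)$ is $\det E(\xi')$, so $a^{^{(+)}}(\xi')=\det E(\xi')$. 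Therefore $\det C^{^{(+)}}(\xi')=\det\big(\operatorname{adj}E(\xi')\big)=\big(\det E(\xi')\big)^{2}=\big(a^{^{(+)}}(\xi')\big)^{2}$, which is nonzero for $\xi'\neq0$ by \eqref{Factor-A-1}. (Equivalently: $\operatorname{adj}E(\xi')=\det E(\xi')\,E(\xi')^{-1}$, so $C^{^{(+)}}(\xi')=a^{^{(+)}}(\xi')\,E(\xi')^{-1}$ is invertible precisely because $E(\xi')$ is.)

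I expect the main obstacle to be purely a matter of care rather than of ideas: justifying the deformation of $\Gamma^-$ to the large circle (all poles of $\big[\widetilde{A}^{^{(+)}}\big]^{-1}$ strictly inside $\Gamma^-$, nothing in between) and fixing the orientation of $\Gamma^-$ so that the sign in \eqref{cont-1} comes out right, and then keeping the indices straight when identifying the ``cofactors'' $p_{_{ij}}^{^{(+)}}$ with the entries of $\operatorname{adj}\big(\tau E(\xi')+G(\xi')\big)$. Once the identity $C^{^{(+)}}(\xi')=\operatorname{adj}E(\xi')=a^{^{(+)}}(\xi')\,E(\xi')^{-1}$ is established, both the value of the contour integral and the claim $\det C^{^{(+)}}(\xi')\neq0$ are immediate.
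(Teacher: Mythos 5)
Your computation of the contour integral is essentially the paper's: deform $\Gamma^-$ to a large circle $|\tau|=R$, split off the $c_{ij}^{(+)}/(a^{(+)}\tau)$ term from the Laurent expansion at infinity, note the remainder is $\mathcal{O}(|\tau|^{-2})$, and let $R\to\infty$; the only extra care you add (confirming that strong ellipticity forces all $\tau$-zeros of $\det\widetilde{A}^{(+)}$ strictly below the real axis) is sound and implicit in the paper's setup.

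For the non-degeneracy of $C^{(+)}$ you take a genuinely different and, I think, cleaner route. The paper passes to polar coordinates $r=|\xi'|$, $\omega=\xi'/|\xi'|$, uses the homogeneities of $c^{(+)}_{ij},b^{(+)}_{ij},d^{(+)}_{ij}$ to write $\det\mathcal{P}^{(+)}(\omega,r,\xi_3)=\det\bigl(C^{(+)}(\omega)\xi_3^2+B^{(+)}(\omega)\xi_3 r+D^{(+)}(\omega)r^2\bigr)$, and sends $r\to 0$ to extract $\xi_3^6\det C^{(+)}(\omega)$. You instead write $\widetilde{A}^{(+)}(\xi',\tau)=\tau E(\xi')+G(\xi')$, identify $C^{(+)}(\xi')$ as the leading $\tau^2$-coefficient of the adjugate, namely $\operatorname{adj}E(\xi')$, and $a^{(+)}(\xi')$ as $\det E(\xi')$, and then invoke the $3\times3$ polynomial identity $\det(\operatorname{adj}E)=(\det E)^2$. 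This is purely algebraic, avoids the limiting argument (whose final step from ``nonzero for $r>0$'' to ``nonzero at $r=0$'' needs a word of justification the paper leaves implicit), and gives more: the explicit quantitative formula $\det C^{(+)}(\xi')=\bigl(a^{(+)}(\xi')\bigr)^2$ and the identity $C^{(+)}(\xi')=a^{(+)}(\xi')\,E(\xi')^{-1}$, neither of which the paper records. Both approaches are valid; yours is shorter and produces sharper information, while the paper's stays within the homogeneity/limit framework it has already set up.
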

\begin{proof}
Note that $\det \widetilde{A}^{^{(+)}}(\xi',\tau)$ is a third order polynomial  in
$\tau$, while $p^{^{(+)}}_{_{ij}}(\xi',\tau)$ is a second order polynomial in $\tau$ defined
in \eqref{co-factors}.

Let \,$\Gamma_{_{R}}$\, be a circle centred at the origin and having sufficiently large radius $R$.
By the Cauchy theorem then we derive
\begin{align}\label{L2}
\frac{1}{2\pi i} \int_{\Gamma^-}\big\{\big[\widetilde{A}^{^{(+)}}(\xi',\tau)\big]^{-1}\big\}_{ij}
\,d\tau
=&\frac{1}{2\pi i} \int_{\Gamma^-}\frac{p_{_{ij}}^{^{(+)}}(\xi',\tau)
}
{\det \widetilde{A}^{^{(+)}}(\xi',\tau)}\,
\,d\tau 
=\frac{1}{2\pi i} \int_{\Gamma_R}\frac{p_{_{ij}}^{^{(+)}}(\xi',\tau)
}
{\det \widetilde{A}^{^{(+)}}(\xi',\tau)}\,
\,d\tau\nonumber \\
=&\frac{1}{2\pi i} \;\frac{c_{_{ij}}^{^{(+)}}(\xi')}{a^{^{(+)}}(\xi')}\int_{\Gamma_R}\frac{1
}
{\tau}\,\,d\tau 
 + \int_{\Gamma_R}Q_{_{ij}}(\xi',\tau)\,\,d\tau
 =\frac{c_{_{ij}}^{^{(+)}}(\xi')}{a^{^{(+)}}(\xi')} + \int_{\Gamma_R}Q_{_{ij}}(\xi',\tau)\,\,d\tau,
\end{align}
where \;\;\;$Q_{_{ij}}(\xi',\tau)=O(|\tau|^{-2})$  \;\;\;as\;\;\;$|\tau|\rightarrow\infty$.\\
It is clear that
$$
\lim\limits_{R\to \infty}\int_{\Gamma_R}Q_{_{ij}}(\xi',\tau)\,\,d\tau=0.
$$
Therefore by passing to the limit in \eqref{L2} as $R\rightarrow\infty$  we obtain
$$
\frac{1}{2\pi i} \int_{\Gamma^-}\big\{\big[\widetilde{A}^{^{(+)}}(\xi',\tau)\big]^{-1}\big\}_{ij}
\,d\tau=\frac{c_{_{ij}}^{^{(+)}}(\xi')}{a^{^{(+)}}(\xi')}.
$$

Now we show that  $\det [C^{^{(+)}}]\neq 0$.  We introduce the notations
\begin{equation*}
P^{^{(+)}}(\xi',\xi_3)=[p_{_{ij}}^{^{(+)}}(\xi',\xi_3)]_{ij=1}^3=
C^{^{(+)}}(\xi') \xi_3^2+B^{^{(+)}}(\xi') \xi_3+D^{^{(+)}}(\xi'),
\end{equation*}
where
\begin{gather*}
 B^{^{(+)}}(\xi')=[b_{_{ij}}^{^{(+)}}(\xi')]_{ij=1}^3 \
   \text{and}\; \;
 D^{^{(+)}}(\xi')=[d_{_{ij}}^{^{(+)}}(\xi')]_{ij=1}^3.
\end{gather*}
Since   $\det[\widetilde{A}^{^{(+)}}(\xi',\xi_3)]^{-1}\neq0$  for  $\xi=(\xi',\xi_3)\neq 0$,
therefore
$\det P^{^{(+)}}(\xi',\xi_3)\neq 0$   for $\xi=(\xi',\xi_3)\neq0$.

Let  us introduce new coordinates $r=|\xi'|$, $\omega=\xi'/|\xi'|$   and  denote
\begin{eqnarray*}
 {\mathcal{P}}^{^{(+)}}(\omega,r,\xi_3):=P^{^{(+)}}(\xi',\xi_3)=P^{^{(+)}}(\omega\, r,\xi_3).
\end{eqnarray*}
Then we have
\begin{gather}
\label{p1}
\det {\mathcal{P}}^{^{(+)}}(\omega,r,\xi_3)
=\det P^{^{(+)}}(\xi',\xi_3)
=\det \big( C^{^{(+)}}(\omega) \xi_3^2+B^{^{(+)}}(\omega)\, \xi_3 \,r +D^{^{(+)}}(\omega) r^2\big)\neq 0 \;\;
\text{for all}\;\xi_3\neq0.
\end{gather}
Whence
\begin{eqnarray*}
\lim\limits_{r\to 0} \det {\mathcal{P}}^{^{(+)}}(\omega,r,\xi_3)=
\xi_3^6\,\det C^{^{(+)}}(\omega)\, ,
\end{eqnarray*}
 consequently  $\det C^{^{(+)}}(\omega)\neq 0$
and Lemma \ref{L3.2} is proved.
\end{proof}

For further use, let us introduce the auxiliary operator $\Pi'$ defined as
\begin{align*}
\Pi'(g)(\xi') & :=
\lim\limits_{x_3\to 0+}
r_{_{\!\mathbb{R}^{3}_{+}}}
\mathcal{F}^{-1}_{\xi_3\to x_3}[g(\xi',\xi_3)] 
 =
\frac{1}{2\pi}\lim\limits_{x_3\to 0+}
\int_{-\infty}^{+\infty}g(\xi',\xi_3)e^{-ix_3\xi_3}\,d\xi_3 
=\frac{1}{2\pi}\int_{-\infty}^{+\infty}g(\xi',\xi_3)\,d\xi_3\;\;\text{for}\;\;g(\xi', \cdot)\in L_1(\mathbb{R}). 
\end{align*}
The operator $\Pi'$ can be extended to the class of functions
$g(\xi',\xi_3)$ that are rational in $\xi_3$ with the denominator not vanishing
for real non-zero $\xi=(\xi',\xi_3)\in \mathbb{R}^3\setminus\{0\}$,
homogeneous of order $m\in \mathbb{Z}:=\{0, \pm 1, \pm 2, \dots\}$
in $\xi$ and  infinitely differentiable with respect to $\xi$ for $\xi'\neq 0$.
Then one can show  that (cf. Appendix C in \cite{CMN-IEOT2013} )
\begin{equation}
\label{d1d1d1-N-3}
\Pi'(g)(\xi')=\lim\limits_{x_3\to 0+}
r_{_{\!\mathbb{R}_{+}}}
\mathcal{F}^{-1}_{\xi_3\to x_3}[g(\xi',\xi_3)]=-\frac{1}{2\pi}\int_{\Gamma^-}g(\xi',\zeta)\,d\zeta,
 \end{equation}
where $r_{_{\!\mathbb{R}_{+}}}$ denotes the restriction operator onto $\mathbb{R}_{+}=(0,\,+\infty)$
with respect to $x_3$, $\Gamma^-$ is a contour in the lower complex half-plane in $\zeta$,
orientated anticlockwise and enclosing
all the poles of the rational function  $g(\xi',\,\cdot)$.
It is clear that if  $g(\xi',\zeta)$  is holomorphic in $\zeta$ in the lower complex half-plane
  (Im$\,\zeta<0)$, then $\Pi'(g)(\xi')=0$.

\section{Invertibility of the Dirichlet LBDIO}

From Theorem \ref{th-D-eq} it follows that the LBDIE system
\eqref{2.52}-\eqref{2.53}, which has a special right hand side, is uniquely solvable in
 the space $ H^{1,\,0}(\Omega,A)\times {H}^{-1/2}(S)$.
Let us investigate the localized boundary-domain integral
operator, generated by the left hand side expressions in \eqref{2.52}-\eqref{2.53},
in appropriate  functional spaces.

 The LBDIE system  \eqref{2.52}-\eqref{2.53} with an arbitrary right hand side
 vector-functions from the space
  $ H^{1}(\Omega)\times {H}^{1/2}(S)$ can be written as
\begin{eqnarray}
\label{14-1}
&&
\mathbf{B}\mathring E{u}\, -V\psi=F_1\;\;\text{in}\;\;\Omega,\\
&&
\label{15}
{\bf N} ^{+} \mathring E{u}  -\mathcal{V}\psi=F_2\;\;\text{on}\;\;S,
\end{eqnarray}
where $\mathbf{B}=\boldsymbol\beta +{\bf N}$, $F_1\in H^1(\Omega)$ and $F_2\in H^{1/2}(S)$.
Let us denote by $\mathfrak{D}$ the localized boundary-domain integral operator generated by the left hand side expressions
in  LBDIE system \eqref{14-1}-\eqref{15},
\begin{equation*}
 \mathfrak{D}:=
\left[
\begin{array}{ll}
r_{_{\!\Omega }}\mathbf{B}\mathring E &   \;\;\; -r_{_{\!\Omega }}V  \\
 {\bf N}^{+}\mathring E                                     &    \;\;\; -\mathcal{V}    \\
\end{array}
\right].
\end{equation*}

We would like to prove the following assertion.
\begin{thm}
\label{th2.2}
Let the localising function $\chi\in X_+^\infty$ and  $r > -\frac{1}{2}$.   Then the operator
\begin{eqnarray}
\label{15-ddd}
\mathfrak{D}\;: H^{r+1}(\Omega)\times H^{r-1/2}(S)
\to
H^{r+1}(\Omega)\times {H}^{r+1/2}(S)
\end{eqnarray}
is invertible.
\end{thm}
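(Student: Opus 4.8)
The plan is to prove invertibility of $\mathfrak{D}$ in two stages: first establish that $\mathfrak{D}$ is Fredholm of index zero in the indicated scale of spaces, and then exploit the uniqueness already available from the equivalence Theorem~\ref{th-D-eq} to upgrade this to invertibility. For the Fredholm property, I would first verify that $\mathfrak{D}$ belongs to the Boutet de Monvel algebra: the $(1,1)$ entry $r_{_\Omega}\mathbf{B}\mathring E$ is a truncated pseudodifferential operator whose principal symbol ${\mathfrak{S}}(\mathbf{B})(y,\xi)=|\xi|^{-2}A(y,\xi)$ is rational, even, homogeneous of order $0$ and positive definite by \eqref{ssss}; the $(1,2)$ entry $-r_{_\Omega}V$ is a potential (Poisson) operator; the $(2,1)$ entry ${\bf N}^+\mathring E=\gamma^+\mathbf{B}\mathring E - \gamma^+\boldsymbol\beta\mathring E$ combined with the trace is a trace-type operator together with a lower-order remainder; and $-\mathcal V$ is a pseudodifferential operator on $S$. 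The mapping properties collected in Appendices A and B together with \eqref{2.40}, \eqref{2.40-} and Corollary~\ref{C1} guarantee the boundedness of \eqref{15-ddd} for all $r>-1/2$ and $\chi\in X_+^\infty$.

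Second, I would compute the principal symbol of $\mathfrak{D}$ as a Boutet de Monvel operator and check its ellipticity. The interior symbol is just ${\mathfrak{S}}(\mathbf{B})(y,\xi)=|\xi|^{-2}A(y,\xi)$, which is invertible for $\xi\neq0$ by strong ellipticity \eqref{1-d}, so the interior part is elliptic. The essential point is the ellipticity of the boundary symbol operator, i.e.\ invertibility of the operator-valued symbol obtained by freezing coefficients at $\widetilde y\in S$, passing to the half-space, and taking the partial Fourier transform in the tangential variables. Here Lemma~\ref{L3.1} does the decisive work: it shows that $r_+\widehat{\mathbf{\widetilde B}}\mathring E_+$ is invertible on $H^s(\mathbb{R}^3_+)$ via the explicit Wiener--Hopf factorization \eqref{FF-1}, \eqref{FF-1b} of ${\mathfrak{S}}(\mathbf{\widetilde B})$ with respect to $\xi_3$, with both factors lying in the class $D_0$. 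Using Lemma~\ref{L3.2} I would then compute the contribution of the single-layer potential $V$ and of the trace term $\mathcal V$ to the model boundary operator and show the resulting $4\times4$ (after the $3+1$ block structure) operator-valued symbol is invertible. Concretely, after the factorization one reduces the model system \eqref{14-1}--\eqref{15} to a Riemann--Hilbert problem on the line, solved by $\Pi^\pm$, and the determinant condition $\det C^{(+)}(\xi')\neq0$ from Lemma~\ref{L3.2} is exactly what makes the reduced system non-degenerate; the operator $\Pi'$ introduced via \eqref{d1d1d1-N-3} is the tool for evaluating the relevant boundary integrals. This ellipticity of both the interior and boundary symbols yields that $\mathfrak{D}$ in \eqref{15-ddd} is Fredholm with index zero (the index of a Boutet de Monvel operator with trivial partial indices of factorization, which here vanish by the remark following \eqref{ssss}).

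The final step is to pass from "Fredholm of index zero" to "invertible" by showing $\ker\mathfrak{D}=\{0\}$. For $r=0$, i.e.\ on $H^1(\Omega)\times H^{-1/2}(S)$, I would take a pair $(u,\psi)$ in the kernel; then $(u,\psi)$ solves the homogeneous LBDIE system \eqref{14-1}--\eqref{15} with $F_1=0$, $F_2=0$. Lemma~\ref{l2.1} (applied with $f=0$, $F=0$, $\varphi=0$ so that the right-hand side is of the required form) gives $u\in H^{1,0}(\Omega,A)$, after which Theorem~\ref{th-D-eq}(ii) forces $u$ to solve the homogeneous Dirichlet BVP and $\psi=T^+u$; by the uniqueness in Remark~\ref{rBVP}, $u=0$ and hence $\psi=0$. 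Thus $\mathfrak{D}$ is injective and Fredholm of index zero on $H^1(\Omega)\times H^{-1/2}(S)$, hence invertible there. For general $r>-1/2$ I would then use a standard bootstrap/regularity argument in the Boutet de Monvel calculus: the kernel of \eqref{15-ddd} is independent of $r$ (an element of the kernel for some $r>-1/2$ actually lies in the kernel for all larger $r$, by the smoothing properties of the parametrix of $\mathfrak{D}$), so triviality of the kernel at $r=0$ propagates to all $r>-1/2$, and index-zero Fredholmness then gives invertibility throughout the scale.

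The main obstacle is the second step: verifying the ellipticity of the boundary symbol operator of the full $2\times2$ block system, not just of $\mathbf{B}$ alone. One must correctly assemble the contributions of the single-layer potential $V$ (a potential operator in the calculus) and of $\mathcal V=\gamma^+V$ into the model operator on the half-line, carry out the Wiener--Hopf/Riemann--Hilbert reduction using the factors ${\mathfrak{S}}^{(\pm)}(\mathbf{\widetilde B})$, and prove the reduced system is uniquely solvable --- this is precisely where Lemmas~\ref{L3.1} and~\ref{L3.2} (especially the non-vanishing of $\det C^{(+)}$) are indispensable, and where the bulk of the technical computation lies.
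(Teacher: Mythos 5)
Your proposal follows the same overall architecture as the paper: reduce to Fredholm-ness via the local principle and the \v{S}apiro--Lopatinski\u{i} condition (using Lemmas~\ref{L3.1} and~\ref{L3.2} to establish invertibility of the model half-space operator and the key determinant condition), establish that the index is zero, and then use the equivalence Theorem~\ref{th-D-eq} together with a regularity bootstrap to get injectivity, hence invertibility. Your route to injectivity at $r=0$ via Lemma~\ref{l2.1} is a mild variant of the paper's left-regularizer bootstrap, and for general $r>-1/2$ your ``kernel is independent of $r$'' argument is essentially the paper's iterated $\mathfrak L\mathfrak D=I+\mathfrak T$ regularity improvement, so those parts are sound.

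There is, however, a genuine gap at the index-zero step. You assert that $\mathfrak D$ has index zero because ``the partial indices of factorization\,\ldots\,vanish by the remark following \eqref{ssss}'', but that remark concerns only the interior symbol of $\mathbf B$; it says nothing about the index of the full $2\times 2$ block (i.e.\ $6\times 6$, not $4\times 4$) boundary--domain system $\mathfrak D$, which couples a truncated pseudodifferential operator, a single-layer potential, a trace term and a boundary pseudodifferential operator. Ellipticity in the Boutet de Monvel/Vishik--Eskin sense only gives Fredholm-ness; the index of the coupled system must be computed separately. The paper does this in Lemma~\ref{Step 3} by a homotopy $\mathfrak D_t$ from $\mathfrak D$ down to $\mathfrak D_0$ (a diagonal operator with obviously zero index), and crucially re-verifies the \v{S}apiro--Lopatinski\u{i} condition for every $t\in[0,1]$ so that the Fredholm index is constant along the homotopy. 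This re-verification requires running the entire $e_t(\xi')$ computation (with the $t$-dependent factorization $\widetilde A_t=\widetilde A_t^{(-)}\widetilde A_t^{(+)}$ and $\det C_t^{(+)}\neq 0$) --- it is not simply inherited from the $t=1$ case. Your proposal is missing this homotopy argument; without it the passage from ``Fredholm'' to ``Fredholm of index zero'' is unjustified. (A secondary imprecision: the \v{S}apiro--Lopatinski\u{i} condition in \eqref{16} is $\det e(\xi')\neq 0$, which factors through $\det\widetilde{\boldsymbol\beta}$, $\det C^{(+)}$ and $\det[\widetilde A^{(-)}(\xi',-i|\xi'|)]^{-1}$; the nonvanishing of $\det C^{(+)}$ from Lemma~\ref{L3.2} is one of three ingredients, not the whole condition.)
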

We will reduce the theorem proof to several lemmas.

\begin{lem}\label{Step 1} Let $\chi\in X^\infty$. The operator  $r_{_{\!\Omega }}\mathbf{B}\mathring E \,:\,H^{s}(\Omega)\to H^{s}(\Omega)$ for $s\geq 0$  is Fredholm with zero index.
\end{lem}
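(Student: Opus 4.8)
The plan is to show that $r_{_{\!\Omega}}\mathbf{B}\mathring E$ is a pseudodifferential operator of order zero on $\Omega$ whose principal homogeneous symbol is $|\xi|^{-2}A(y,\xi)$, which by \eqref{ssss} and \eqref{1-d} is uniformly positive definite, hence strongly elliptic. First I would recall from \eqref{2.40} (with $r=s\geq0$) that the operator is bounded on $H^{s}(\Omega)$, and from Section~\ref{S-Symb} that $\mathbf{B}=\boldsymbol\beta+{\bf N}$ is a Cauchy-type singular integral operator of class zero in the Boutet de Monvel algebra (its symbol is an even rational homogeneous matrix-function of order $0$, and the Tricomi condition is satisfied so the operator is bounded). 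Since the symbol ${\mathfrak S}(\mathbf{B})(y,\xi)=|\xi|^{-2}A(y,\xi)$ has zero partial indices of factorization (as noted after \eqref{ssss}), there is no loss of regularity coming from the transmission/boundary behaviour, and the truncated operator $r_{_{\!\Omega}}\mathbf{B}\mathring E$ acts continuously in $H^s(\Omega)$ for $s\ge0$.

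Next, the Fredholm property follows from strong ellipticity via the standard local principle for operators of the Boutet de Monvel type (or, equivalently, the Vishik--Eskin theory): at interior points the symbol $|\xi|^{-2}A(y,\xi)$ being invertible gives a local parametrix, while at boundary points $\widetilde y\in S$ the relevant model operator is the half-space operator $r_+\widehat{\widetilde{\mathbf B}}\mathring E_+$, which by Lemma~\ref{L3.1} is invertible on $H^{s}(\mathbb R^3_+)$ for all $s\geq0$. Patching these local inverses together with a partition of unity produces a two-sided regularizer for $r_{_{\!\Omega}}\mathbf{B}\mathring E$ modulo compact (in fact smoothing) operators, which establishes the Fredholm property in $H^s(\Omega)$, $s\geq0$.

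It remains to compute the index and show it vanishes. For this I would use the homotopy invariance of the index within the class of strongly elliptic operators of this type: the family $\mathbf{B}_t$ with symbol $(1-t)|\xi|^{-2}A(y,\xi)+tI$, $t\in[0,1]$, stays strongly elliptic and positive definite (each symbol value is a convex combination of two positive-definite matrices), the corresponding half-space model operators remain invertible along the homotopy by the same factorization argument as in Lemma~\ref{L3.1} (the partial indices stay zero), so the truncated operators $r_{_{\!\Omega}}\mathbf{B}_t\mathring E$ form a continuous family of Fredholm operators; hence their index is constant in $t$. At $t=1$ the symbol is the identity, so $\mathbf{B}_1=I$ up to a lower-order (smoothing) remainder coming from the localization, and $r_{_{\!\Omega}}\mathbf{B}_1\mathring E=I+(\text{compact})$ has index zero. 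Therefore $\operatorname{ind}\big(r_{_{\!\Omega}}\mathbf{B}\mathring E\big)=0$.

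The main obstacle I expect is the careful justification that the truncated operator $r_{_{\!\Omega}}\mathbf{B}\mathring E$ genuinely fits the Vishik--Eskin/Boutet de Monvel framework on the curved domain $\Omega$ with variable coefficients — specifically, verifying that the symbol class ($D_0$, cf. \eqref{class-D-2}), the transmission property and the vanishing of partial indices are preserved under localization by $\chi\in X^\infty$ and under the change to boundary normal coordinates, so that Lemma~\ref{L3.1} can indeed serve as the boundary model operator. Once that reduction is in place, Fredholmness is automatic from strong ellipticity and the computation of the index by the homotopy to the identity is routine.
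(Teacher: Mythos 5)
Your proof takes essentially the same approach as the paper's: the rational positive-definite symbol with the transmission property is fed into the Vishik--Eskin local principle together with Lemma~\ref{L3.1} (the invertibility of the half-space model operator) to get Fredholmness, and the index is computed by a homotopy through uniformly positive-definite symbols to a trivially invertible operator. The only minor variation is that the paper deforms the symbol to $\boldsymbol\beta(y)$ via the concrete operator family $\mathcal{B}_t=r_{_{\!\Omega}}(\boldsymbol\beta+t\mathbf{N})\mathring E$, whereas you deform to the identity $I$; both endpoints are invertible and give index zero.
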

\begin{proof}
Since \eqref{ssss} is a rational function in $\xi$,  we can apply the theory of pseudodifferential operators with symbol satisfying the transmission conditions
 (see \cite{Esk},   \cite{BdM}, \cite{RS}, \cite{Shar}, \cite{BrSh}).
Now with the help of the local principle (see Lemma 23.9 in \cite{Esk}) and Lemma \ref{L3.1}
we deduce that the operator
    $$
  \mathcal{B}:=  r_{_{\Omega }} \mathbf{B}\,\mathring E\; : \; H^{s}(\Omega)\to H^{s}(\Omega)
    $$
 is Fredholm for all $s\geq 0$.\\
To show that Ind$\,\mathcal{B}=0$, we use  that the operators  $\mathcal{B}$  and
$$
\mathcal{B}_t=r_{_{\!\Omega }}(\boldsymbol\beta +t\, {\bf N})\mathring E,
$$
where  $t\in [0,1]$, are homotopic.  Note that $\mathcal{B}=\mathcal{B}_1$. The principal homogeneous symbol of the operator  $\mathcal{B}_{t}$ has the form
\begin{equation*}
{\mathfrak{S}} (\mathcal{B}_{t})(y,\xi)=\boldsymbol\beta(y) +t\;{\mathfrak{S}} ({\bf N})(y,\xi)=(1-t)\boldsymbol\beta(y)+t {\mathfrak{S}} (\mathbf{B})(y,\xi).
\end{equation*}
It is easy to see that the symbol ${\mathfrak{S}} (\mathcal{B}_{t})(y,\xi)$ is positive definite,
\begin{equation*}
[{\mathfrak{S}} (\mathcal{B}_{t})(y,\xi)\zeta] \cdot \bar\zeta =
 (1-t)[\boldsymbol\beta(y)\,\zeta] \cdot \bar\zeta + t [{\mathfrak{S}} (\mathbf{B})(y,\xi)\zeta] \cdot \bar\zeta \geq c
|\zeta|^{2}
\end{equation*}
for all $y\in \overline{\Omega}$,
$\xi\neq 0$, $\zeta \in\mathbb{C}^{3}$ and  $t\in [0,1]$,  where $c$  is some positive number.\\
Since ${\mathfrak{S}} (\mathcal{B}_{t})(y,\xi)$ is rational, even, and homogeneous of order zero in $\xi$,
we conclude, as above, that the operator
$$
\mathcal{B}_{t} \; : \; H^{s}(\Omega)\to H^{s}(\Omega)
$$
is Fredholm for all $s\geq0$ and for all $t\in[0,1]$. Therefore Ind$\,\mathcal{B}_t$ is the same for all $t\in[0,1]$.
On the other hand, due to the equality $\mathcal{B}_0=r_{_{\!\Omega }}\,I$, we get
$$
  {\rm{Ind}} \,\mathcal{B}={\rm{Ind}}\, \mathcal{B}_1= {\rm{Ind}}\, \mathcal{B}_t=\, {\rm{Ind}}\, \mathcal{B}_0=0.
$$
\end{proof}

\begin{lem}\label{Step 2}
Let $\chi\in X^\infty$. The operator $ \mathfrak{D}$ given by \eqref{15-ddd} is Fredholm.
\end{lem}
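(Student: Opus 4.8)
The plan is to exploit the upper-triangular-like block structure of $\mathfrak D$ together with the Fredholmness of its $(1,1)$-entry established in Lemma \ref{Step 1}, and then handle the Schur-complement-type operator on the boundary. First I would observe that by Lemma \ref{Step 1} the operator $\mathcal B = r_{_{\!\Omega}}\mathbf B\mathring E: H^{r+1}(\Omega)\to H^{r+1}(\Omega)$ is Fredholm with zero index for $r+1\ge 0$, i.e. $r\ge -1$, and in particular for all $r>-\tfrac12$. Since $\mathbf B$ is a strongly elliptic pseudodifferential operator of order $0$ whose principal symbol \eqref{ssss} is rational (hence satisfies the transmission condition), $\mathcal B$ possesses a two-sided parametrix $\mathcal B^{(-1)}$ modulo operators smoothing in the scale $H^{s}(\Omega)$; alternatively one may work directly with a Fredholm inverse. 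Using this, I would perform a block factorization of $\mathfrak D$: multiplying $\mathfrak D$ on the left by the (Fredholm, by Lemma \ref{Step 1}) block-diagonal-plus-correction operator built from $\mathcal B^{(-1)}$ reduces the study of $\mathfrak D$ to the study of the boundary operator
\begin{equation*}
\mathfrak D_S := -\mathcal V + {\bf N}^{+}\mathring E\,\mathcal B^{(-1)}\,r_{_{\!\Omega}}V \;:\; H^{r-1/2}(S)\to H^{r+1/2}(S),
\end{equation*}
which is the Schur complement; $\mathfrak D$ is Fredholm on the product space if and only if $\mathfrak D_S$ is Fredholm on $S$, because the corrections coming from the finite-dimensional kernel/cokernel of $\mathcal B$ and from compact remainders do not affect the Fredholm property.

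Next I would identify $\mathfrak D_S$ as an element of the Boutet de Monvel algebra and compute its principal symbol. The operator ${\bf N}^{+}\mathring E = \gamma^+\mathbf N\mathring E$ is the composition of the trace operator with the pseudodifferential operator $\mathbf B$ (up to the $\boldsymbol\beta$ term and lower-order terms with kernels $R$, $R^{(1)}$ which are smoothing of order $1$), hence a trace-type operator; $r_{_{\!\Omega}}V$ is a potential-type (Poisson) operator with the localized single-layer kernel; and $\mathcal B^{(-1)}$ is a truncated pseudodifferential operator. Their composition, together with $\mathcal V$ (a classical pseudodifferential operator on $S$ of order $-1$ up to a smoothing remainder, by the mapping properties in Appendix B), is a pseudodifferential operator on the closed manifold $S$ of order $-1$. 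I would then compute its principal homogeneous symbol on the cotangent bundle of $S$ using the symbol \eqref{2.34-1}, \eqref{ssss} of $\mathbf B$, the factorization \eqref{FF-1}--\eqref{FF-1b}, and the explicit half-space inverse provided by Lemma \ref{L3.1}; the key computational tool is the operator $\Pi'$ and formula \eqref{d1d1d1-N-3} together with Lemma \ref{L3.2}, which evaluate the relevant contour integrals $\frac{1}{2\pi i}\int_{\Gamma^-}[\widetilde A^{(+)}]^{-1}d\tau$ that arise when one traces the composition down to the boundary. One expects the principal symbol of $\mathfrak D_S$ to come out to be $-\tfrac12\,[\widetilde A(\widetilde y,\omega,0)]$-type expression, or more precisely a rational, homogeneous-of-order-$(-1)$ matrix function of $\xi'$ which is elliptic, i.e. $\det\neq 0$ for $\xi'\neq 0$; the nonvanishing of this determinant follows from the positive definiteness \eqref{1-d} of $A$ and from the nondegeneracy assertion $\det C^{(+)}(\xi')\neq 0$ in Lemma \ref{L3.2}.

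Once ellipticity of the principal symbol of $\mathfrak D_S$ is established, standard elliptic theory for pseudodifferential operators on the closed compact manifold $S$ (or, equivalently, the general Fredholm criterion for operators in the Boutet de Monvel algebra, cf. \cite{BdM}, \cite{RS}) yields that $\mathfrak D_S: H^{r-1/2}(S)\to H^{r+1/2}(S)$ is Fredholm for every $r\in\mathbb R$, hence for $r>-\tfrac12$. Undoing the block reduction then gives that $\mathfrak D$ in \eqref{15-ddd} is Fredholm. I would need to be slightly careful in the reduction step to ensure that all the auxiliary operators used (the parametrix $\mathcal B^{(-1)}$, the extension operators $\mathring E$, $\widetilde E^r$, the restriction $r_{_{\!\Omega}}$) act continuously in the indicated spaces for the full range $r>-\tfrac12$; this is where the distinction between $\mathring E$ and $\widetilde E^r$ from \eqref{N0}--\eqref{N1} and the assumption $r>-\tfrac12$ genuinely enters, matching the hypothesis of the theorem.

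The main obstacle I anticipate is the explicit computation of the principal symbol of the Schur-complement operator $\mathfrak D_S$ and verifying its ellipticity: this requires carefully composing the half-space symbols — the factorized symbol of $\mathbf B$, the symbol of the inverse operator from Lemma \ref{L3.1}, the symbol of the single-layer potential $V$, and the trace — and then reducing the resulting $\xi_3$-integral to a residue computation handled by $\Pi'$ and Lemma \ref{L3.2}. Keeping track of the plus/minus factorization, of which contour ($\Gamma^-$ versus $\Gamma^+$) picks up which poles, and of the homogeneity bookkeeping is the delicate part; the positive definiteness \eqref{1-d} and Lemma \ref{L3.2} are exactly the inputs that guarantee the final symbol is invertible, so the proof should close cleanly once that symbol is in hand.
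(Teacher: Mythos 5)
Your plan is sound and in fact lands on the same computational core as the paper's proof; the difference is in the framing of the reduction. The paper does not perform a global Schur complement on $\Omega$: it invokes Eskin's local principle (his \S\S 19, 22--23), checks local Fredholmness at an interior frozen point via Lemma~\ref{Step 1}, and then at a boundary frozen point writes the model half-space system \eqref{14-1-dddd}--\eqref{15-dddd}, solves the first equation using Lemma~\ref{L3.1}, and substitutes into the second to get a scalar condition $\det e(\xi')\neq 0$ (the \v{S}apiro--Lopatinski\u{i} condition). That substitution is literally a Schur complement computed at the frozen boundary point, so your plan and the paper's argument coincide there. The advantage of the local-principle route is that you never need to justify that the global operator $\mathbf N^+\mathring E\,\mathcal B^{(-1)}r_{\Omega}V$ lands in a fixed operator class on $S$ and has a well-defined principal symbol: the local version works directly with the frozen symbols and sidesteps the Boutet de Monvel bookkeeping for the parametrix $\mathcal B^{(-1)}$ (which, being the parametrix of a truncated operator on a domain, contains singular Green terms, not just a $\psi$do part). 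Your version is workable but would require this extra justification, and also some care to pass from ``$\mathcal B$ Fredholm'' to ``modify $\mathcal B$ by finite rank to make it invertible, then Schur-complement.''

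Two concrete points where your guess at the boundary symbol would mislead you. First, the symbol $e(\xi')$ is not a single elliptic term of order $-1$ coming from $A$: it splits as $e=e_1+e_2-\mathfrak S(\widetilde{\mathcal V})$, and the decisive fact (equations \eqref{2ccc}, \eqref{17pp}) is the exact cancellation $e_1(\xi')=\mathfrak S(\widetilde{\mathcal V})(\xi')=\tfrac{1}{2|\xi'|}I$, so that $e=e_2$. Without spotting this cancellation you would be trying to prove invertibility of a sum that is not obviously invertible. Second, $e_2$ is not a ``$-\tfrac12\widetilde A(\widetilde y,\omega,0)$-type'' expression; it is, by Lemma~\ref{L3.2}, $e_2(\xi')=\tfrac{i}{a^{(+)}(\xi')}\widetilde{\boldsymbol\beta}\,C^{(+)}(\xi')\,[\widetilde A^{(-)}(\xi',-i|\xi'|)]^{-1}$, whose determinant is nonzero because $\det\widetilde{\boldsymbol\beta}\neq 0$ (this follows from \eqref{1-d} and \eqref{2.24}), $\det C^{(+)}(\xi')\neq 0$ (Lemma~\ref{L3.2}), and $\det\widetilde A^{(-)}(\xi',-i|\xi'|)\neq 0$ (from the factorization \eqref{Factor-A}). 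So the ellipticity ultimately hinges on $\widetilde{\boldsymbol\beta}$, i.e.\ on the presence of the $\boldsymbol\beta$ correction in $\mathbf B=\boldsymbol\beta+\mathbf N$ — a feature that a naive guess at the Schur symbol would miss.
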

\begin{proof}
To investigate  Fredholm properties of the operator $\mathfrak{D}$
we apply the local principle (cf. e.g., \cite{Agr}, \cite{Esk}, $\S\, 19$ and  $\S\, 22$).
Due to this principle, we have to
show first that the operator $\mathfrak{D}$ is locally Fredholm at an arbitrary "frozen" interior point
$\widetilde{y}\in \Omega$, and secondly
 that the so called generalized {\it \v{S}apiro-Lopatinski\u{i}
condition}  for the operator $\mathfrak{D}$ holds at an arbitrary "frozen" boundary point $\widetilde{y} \in S$.
To obtain the explicit form of this condition we proceed as follows.
Let $\widetilde{U}$  be a  neighbourhood  of a fixed point  $\widetilde{y}\in \overline{\Omega}$
and let   $\widetilde{\psi}_0, \widetilde{\varphi}_0 \in \mathcal{D}(\widetilde{U})$ such that
$$
{\rm{supp}}\,\widetilde{\psi}_0\cap {\rm{supp}}\,\widetilde{\varphi}_0 \neq \emptyset,
 \quad \widetilde{y}\in {\rm{supp}}\,\widetilde{\psi}_0\cap {\rm{supp}}\, \widetilde{\varphi}_0,
 $$
 and consider the operator $\widetilde{\psi}_0\mathfrak{D}\,\widetilde{\varphi}_0.$
We consider separately two possible cases, case (1): $  \widetilde{y}\in {\Omega}$, and   case (2): $\widetilde{y }\in S$.

 {\it Case (1)}. If $\widetilde{y }\in {\Omega}$ then we can choose a
 neighbourhood   ${\widetilde{U}}$ such that   $\overline{\widetilde{U}}\subset{\Omega}$.
  Therefore the operator $\widetilde{\psi}_0\mathfrak{D}\,\widetilde{\varphi}_0$ has the same Fredholm properties as the operator
  $\widetilde{\psi}_0{\mathbf{B}}\,\widetilde{\varphi}_0$ (see the similar arguments in the proof of Theorem 22.1 in \cite{Esk}).
  Then by Lemma~\ref{Step 1} we conclude that $\widetilde{\psi}_0\mathfrak{D}\,\widetilde{\varphi}_0$ is a locally Fredholm operator
  at interior points  of $\Omega$.

{\it Case (2)}. If   $\widetilde{y} \in S$,  then at this point we have to "freeze" the operator $\widetilde{\psi}_0 \,\mathfrak{D}\,\widetilde{\varphi}_0 $,  which means that we can choose
a neighbourhood ${\widetilde{U}}$ sufficiently small such that at the  local
co-ordinate system with the origin  at the point $\widetilde{y}$ and the third axis coinciding with the normal
vector at the point $\widetilde{y} \in S$, the following decomposition holds
\begin{eqnarray}
\label{o-15}
\widetilde{\psi}_0 \mathfrak{D}\,\widetilde{\varphi}_0 =\widetilde{\psi}_0 \,
\Big(\widehat{\mathfrak{\widetilde{D}}} + \mathbf{\widetilde{K}} +\mathbf{\widetilde{T}}\Big)\,\widetilde{\varphi}_0,
\end{eqnarray}
where
\begin{eqnarray*}
\mathbf{\widetilde{K}} \;: H^{r+1}(\mathbb{R}^3_{+})\times H^{r-1/2}(\mathbb{R}^2)
\to
H^{r+1}(\mathbb{R}^3_{+})\times {H}^{r+1/2}(\mathbb{R}^2)
\end{eqnarray*}
  is a bounded operator with small norm, while
\begin{eqnarray*}
\mathbf{\widetilde{T}}\;: H^{r+1}(\mathbb{R}^3_{+})\times H^{r-1/2}(\mathbb{R}^2)
\to
H^{r+2}(\mathbb{R}^3_{+})\times {H}^{r+3/2}(\mathbb{R}^2)
\end{eqnarray*}
is a bounded operator.
The operator
\begin{equation*}
\widehat{\mathfrak{\widetilde{D}}}:=
\left[
\begin{array}{ll}
r_+\widehat{\mathbf{\widetilde{B}}} \mathring E&   \;\;\; -r_+\widehat{\widetilde{V}}  \\[1ex]
 \widehat{{\mathbf{\widetilde N}}}\vphantom{\mathbf N}^{+}\mathring E                                   &    \;\;\; -\widehat{\mathcal{\widetilde{V}}}\\
\end{array}
\right]
\end{equation*}
with $r_+=r_{_{\!\mathbb{R}^{3}_{+}}}$, is  defined in the upper half-space $\mathbb{R}^3_{+}$
and possesses the following mapping property
\begin{equation}
\label{ddd-1}
\widehat{\mathfrak{\widetilde{D}}}\;: H^{r+1}(\mathbb{R}^3_{+})\times H^{r-1/2}(\mathbb{R}^2)
\to
H^{r+1}(\mathbb{R}^3_{+})\times {H}^{r+1/2}(\mathbb{R}^2).
\end{equation}
The operators involved in the expression of $\widehat{\mathfrak{\widetilde{D}}}$ are defined as follows: for the operator $\widetilde{M}$, the operator $\widehat{\widetilde{M}}$ denotes  the operator in $\mathbb{R}^n$ $(n=2,3)$ constructed by the symbol
$$
\widehat{\mathfrak{S}} ({\widetilde{M}})(\xi )={\mathfrak{S}} {}(\widetilde{M})
\Big((1+|\xi '|)\omega, \xi_3\Big)  \;\;\text{if} \;\; n=3
$$
and
$$
\widehat{\mathfrak{S}} ({\widetilde{M}})(\xi )={\mathfrak{S}} {}(\widetilde{M})
\Big((1+|\xi '|)\omega\Big) \;\;\; \text{if} \;\; \;n=2,
$$
where  \,$\omega=\frac{\xi '}{| \xi '|}$, \, $\xi=(\xi ',\xi_n)$, \,$\xi '=(\xi_1,...,\xi_{n-1})$.

The generalized \v{S}apiro-Lopatinski\u{i} condition is related to the invertibility of the operator
\eqref{ddd-1}. Indeed,  let us write the system corresponding to the operator $\widehat{\mathfrak{\widetilde{D}}}$:
\begin{eqnarray}
\label{14-1-dddd}
r_+\widehat{\mathbf{\widetilde{B}}} \mathring E {\widetilde{u}}\, -r_+\widehat{\widetilde{V}} \widetilde{\psi} =\widetilde{F}_1\;\;&\text{in}&\;\;\mathbb{R}^3_{+}\,,\\
\label{15-dddd}
\widehat{{\mathbf{\widetilde N}}}\vphantom{\mathbf N}^{+}\mathring E{\widetilde{u}}-\widehat{\mathcal{\widetilde{V}}} \widetilde{\psi } =\widetilde{F}_2\;\;&\text{on}&\;\;\mathbb{R}^2\,,
\end{eqnarray}
where       $\widetilde{F}_1\in H^1(\mathbb{R}^3_{+})$,  $\widetilde{F}_2 \in H^{1/2}(\mathbb{R}^2 )$.\\
Note that the operator $r_+\widehat{\mathbf{\widetilde{B}}} \mathring E$ is a singular integral operator with even rational elliptic principal
homogeneous symbol. Then  due to Lemma \ref{L3.1}  the operator
\begin{eqnarray*}
r_+\widehat{\mathbf{\widetilde{B}}} \mathring E \;: H^{r+1}(\mathbb{R}^3_{+})
\to
H^{r+1}(\mathbb{R}^3_{+})
\end{eqnarray*}
is invertible,
we can determine $\widetilde{u}$ from equation \eqref{14-1-dddd} and write
\begin{gather}
\label{ddd-11}
\mathring E {\widetilde{u}}
={\mathring E}\big[r_+ \widehat{\mathbf{\widetilde{B}}}\mathring E\big]^{-1}\widetilde{f}
=
 \mathcal{F}^{-1}\Big\{\big[\widehat{\mathfrak{S}}^{^{(+)}}({\mathbf{\widetilde{B}}})\big]^{-1}\;
\Pi^+\Big(\big[\widehat{\mathfrak{S}}^{^{(-)}}({\mathbf{\widetilde{B}}})\big]^{-1}\mathcal{F}(\widetilde{f}_*)
\Big)\Big\}\,, 
\end{gather}
where
$\widetilde{f}_* =\widetilde{F}_{1*} +\widehat{\widetilde{V}} \widetilde{\psi}$  is an extension of $\widetilde{f} =\widetilde{F}_1 +r_+\widehat{\widetilde{V}} \widetilde{\psi}$ from $\mathbb{R}^{3}_{+}$ to $\mathbb{R}^{3}$
preserving the function space.
The symbols ${\widehat{\mathfrak{S}}}^{^{(\pm)}}({M})$ denote the so called "plus" and "minus"
factors in the factorization of the symbol
$\widehat{{\mathfrak{S}}}{} ( {M})$ with respect to the variable $\xi_3$.
Note that the function $\mathring E{\widetilde{u}}$ in \eqref{ddd-11} does not depend on the chosen extension $\widetilde{f}_*$ of $\widetilde{f}$.

Substituting \eqref{ddd-11} into \eqref{15-dddd} leads to the following pseudodifferential
  equation with respect to the unknown function  $\widetilde{\psi}$:
\begin{gather}
\widehat{{\mathbf{\widetilde N}}}\vphantom{\mathbf N}^{+}
\mathcal{F}^{-1}\Big\{\big[\widehat{{\mathfrak{S}}}^{^{(+)}}( {\mathbf{\widetilde{B}}})]^{-1}\;
\Pi^+
\Big(\big[\widehat{\mathfrak{S}}^{^{(-)}}({\mathbf{\widetilde{B}}})\big]^{-1}\mathcal{F}
(\widehat{\widetilde{V}} \widetilde{\psi})
\Big)\Big\} 
-\widehat{\mathcal{\widetilde{V}}} \widetilde{\psi}
=\widetilde{F}\;\;\text{on}\;\;\mathbb{R}^2, 
\label{o-1}
\end{gather}
where
$$
\widetilde{F }=\widetilde{F}_2 - \widehat{{\mathbf{\widetilde N}}}\vphantom{\mathbf N}^{+}\mathring E\,
\big[r_+ \widehat{\mathbf{\widetilde{B}}}\mathring E\big]^{-1}\widetilde{F}_1.
$$
It is easy to see that
\begin{align*}
 {\mathbf{\widetilde N}}^{+} \, {v}\,(\widetilde{y}\, ')&
 =\Big[
\mathcal{F}^{-1}_{ \xi \to \widetilde{y} }\big[
{\mathfrak{S}} ({\mathbf{\widetilde N}})( \xi )\;
\mathcal{F}(v)(\xi )\big]\Big]_{\widetilde{y} _{_{3}}=0+} 
=\mathcal{F}^{-1}_{{\xi}\,'\to \widetilde{y}\, '}\Big[\Pi'\big[
{\mathfrak{S}} ({\mathbf{\widetilde N}})\;\mathcal{F}(v)\big](\xi')\Big].
\end{align*}
In view of the relation (see, e.g., \cite[Eq. (4.1)]{Costabel1988}, \cite[Eqs. (B.5), (B.6)]{CMN-IEOT2013})
\begin{align*}
\widetilde{V}\widetilde{\psi}(y)
=-\langle\gamma\widetilde{P}(\cdot-y),\widetilde{\psi}\rangle_S
=-\langle\widetilde{P}(\cdot-y),\gamma^*\widetilde{\psi}\rangle_{\mathbb R^3}
=-\widetilde{\mathbf{P}}(\gamma^*\widetilde{\psi})(y),
\end{align*}
where the operator $\gamma^*$ is dual to the trace operator $\gamma$. When the
surface $S$ coincides with $\mathbb{R}^2=\partial\mathbb R^3_+$, then we
have $\gamma^*\widetilde{\psi}=\widetilde\psi(\widetilde y')\otimes\delta_3$
with $\delta_3$ being the one-dimensional Dirac distribution in the $\widetilde{y}_3$
 direction. Then  we arrive at the equality
 \begin{multline*}
\widehat{\mathbf{\widetilde N}}\vphantom{\mathbf N}^{+}\mathcal{F}^{-1}_{\xi \to \widetilde{x}} \Big\{\big[\widehat{\mathfrak{S}}^{^{(+)}}(\mathbf{\widetilde{B}})( \xi )\big]^{-1}\;
\Pi^+\Big(\big[\widehat{\mathfrak{S}}^{^{(-)}}({\mathbf{\widetilde{B}}})\big]^{-1}\mathcal{F}(\widehat{\widetilde{V}}\widetilde{\psi})\Big) ( \xi ) \Big\}\,(\widetilde{y}\,')= \\
 -\mathcal{F}^{-1}_{\xi '\to \widetilde{y}\, '}
\Big\{\Pi'\Big[
 \widehat{\mathfrak{S}} ({{\mathbf{\widetilde N}} )}
 \big[{\widehat{\mathfrak{S}}^{^{(+)}}({\mathbf{\widetilde{B}}} )}\big]^{-1}\;\Pi^+ 
\Big(\big[\widehat{\mathfrak{S}}^{^{(-)}}({\mathbf{\widetilde{B}}} )\big]^{-1}
\widehat{\mathfrak{S}} ({\mathbf{\widetilde{P}}} )
\Big) \Big]( \xi ')
\mathcal{F}_{\widetilde{x }\, '\to \xi '}\widetilde{\psi} \Big\}.
\end{multline*}
With the help of these relations equation \eqref{o-1} can be rewritten in the following form
\begin{eqnarray}
\label{o-3}
\mathcal{F}^{-1}_{\xi '\to \widetilde{y} '}\,\big[\,\widehat{e}\,(\xi ')\,
\mathcal{F}(\widetilde{\psi} )(\xi ')\big]
=\widetilde{F} (\widetilde{y} \, ')\;\;\;\text{on}\;\;\;\mathbb{R}^2,
\end{eqnarray}
where
\begin{eqnarray}
\label{o-4}
\widehat{e}(\xi ')= e \Big((1+|\xi '|)\,\omega\Big),\;\;\;\omega=\frac{\xi '}{|\xi '|},
\end{eqnarray}
with $e$ being a homogeneous function of order $-1$ given by the equality
\begin{align}
\label{16}
 e ( \xi ' )  \!=\!
 -\Pi\,'\left\{{\mathfrak{S}} (\mathbf{\widetilde N} )
\big[{\mathfrak{S}}^{^{(+)}}(\mathbf{\widetilde{B}} )\big]^{-1}\,
\Pi^+\left(\big[{{\mathfrak{S}}^{^{(-)}}(\mathbf{\widetilde{B}} )}\big]^{-1}
{{\mathfrak{S}} (\mathbf{\widetilde{P}})}
\right)\right\}( \xi ' )
 -{\mathfrak{S}} (\mathcal{\widetilde{V}} )( \xi ' ), \quad \forall\, \xi ' \neq 0. 
\end{align}
If the function $\det\,e(\xi ')$ is different from zero for all $\xi '\neq 0$, then
$\det\,\widehat{e}(\xi ')\neq 0$ for all $\xi '\in \mathbb{R}^2$, and the corresponding
pseudodifferential operator
$$
\widehat{{\bf E}} \,:\,H^{s}(\mathbb{R} ) \to H^{s+1}(\mathbb{R} )\;\;\text{for all} \;\;s\in \mathbb{R}
$$
generated by the left hand side expression in \eqref{o-3}
is invertible. In particular, it follows that the system of
equation \eqref{14-1-dddd}-\eqref{15-dddd} is uniquely solvable with respect to $(\widetilde{u },\,\widetilde{\psi} )$  in the space
$H^{1}(\mathbb{R}^3_{+})\times H^{-1/2}(\mathbb{R}^2 )$ for arbitrary right hand sides
$(\widetilde{F}_1 ,\,\widetilde{F}_2 )\in H^1(\mathbb{R}^3_+ )\times H^{1/2}(\mathbb{R}^2 )$.
Consequently, the operator  $\widehat{\mathfrak{\widetilde{D}}}$ in \eqref{ddd-1} is invertible,
which implies that the operator \eqref{o-15} possesses a left and right regularizer.
In turn this yields that the operator \eqref{15-ddd} possesses  a left and right regularizer as well. Thus the operator \eqref{15-ddd} is Fredholm if
\begin{eqnarray*}
\det\, e ( \xi ' ) \neq 0 \qquad \forall\, \xi ' \neq 0.
\end{eqnarray*}
This condition is called the {\it \v{S}apiro-Lopatinski\u{i} condition} (cf. \cite{Esk}, Theorems 12.2 and 23.1, and also formulas (12.27),(12.25)).
Let us show that in our case the \v{S}apiro-Lopatinski\u{i} condition holds.
 To this end let us note that the principal homogeneous symbols
${\mathfrak{S}} {}(\mathbf{\widetilde N}),$  ${\mathfrak{S}} {}(\mathbf{\widetilde{B}}),$ ${\mathfrak{S}} {}(\mathbf{\widetilde{P}}),$ and
${\mathfrak{S}} {}(\mathcal{\widetilde{V}})$
of the operators  $\mathbf{N}$, $\mathbf{B}$, $\mathbf{P} $, and $\mathcal{V}$
in the chosen local co-ordinate system involved in formula \eqref{16} read as:
\begin{align*}
&
{\mathfrak{S}} {}(\mathbf{\widetilde N} )(\xi )=
 |\xi |^{-2}{\widetilde{A}  (\xi )}-\widetilde{\boldsymbol\beta} ,\quad
{\mathfrak{S}} {}(\mathbf{\widetilde{B}})(\xi )=|\xi |^{-2}{\widetilde{A}  (\xi)},\quad
{\mathfrak{S}} {}(\mathbf{\widetilde{P}} )(\xi )
=- |\xi|^{-2} \,I,
\quad
{\mathfrak{S}} {}(\mathcal{\widetilde{V}} )(\xi ')=\frac{1}{2|\xi '|}\, I,\quad
\xi =(\xi ',\xi_3 ), \;\;\xi '=(\xi_1 ,\xi_2 ),
\end{align*}
where  $\widetilde{\boldsymbol\beta}$ denotes the matrix ${\boldsymbol\beta}$ written in chosen local co-ordinate system.
 Rewrite \eqref{16} in the form
\begin{align}
e(\xi') =-\Pi'\left\{\Big({\mathfrak{S}} (\widetilde{\mathbf{B}})-\widetilde{\boldsymbol\beta}\Big)
[{\mathfrak{S}}^{^{(+)}}(\mathbf{\widetilde{B}})]^{-1}
\Pi^+\Big( [{\mathfrak{S}}^{^{(-)}}(\mathbf{\widetilde{B}})]^{-1} {\mathfrak{S}} (\mathbf{\widetilde{P}}) \Big)\right\}(\xi')
 -{\mathfrak{S}} {}(\mathcal{\widetilde{V}})(\xi') 
=e_{1}(\xi')+e_{2}(\xi')-{\mathfrak{S}} {}(\mathcal{\widetilde{V}})(\xi'), 
\label{17}
\end{align}
where
\begin{align}
\label{17b}
&
e_{1}(\xi')=-\Pi'\left\{{\mathfrak{S}} (\mathbf{\widetilde{B}})
[{\mathfrak{S}}^{^{(+)}}(\mathbf{\widetilde{B}})]^{-1}\Pi^+\Big( [{\mathfrak{S}}^{^{(-)}}(\mathbf{\widetilde{B}})]^{-1} {\mathfrak{S}} (\mathbf{\widetilde{P}}) \Big)\right\}(\xi'),\\
\label{17qq}
&
e_{2}(\xi')=\widetilde{\boldsymbol\beta}\,\Pi'\left\{[{\mathfrak{S}}^{^{(+)}}(\mathbf{\widetilde{B}})]^{-1}\Pi^+\Big( [{\mathfrak{S}}^{^{(-)}}(\mathbf{\widetilde{B}})]^{-1} {\mathfrak{S}} (\mathbf{\widetilde{P}}) \Big)\right\}(\xi')      ,\\
\label{17pp}
&
{\mathfrak{S}}{}(\mathcal{\widetilde{V}})(\xi')=\frac{1}{2|\xi'|}\,I.
\end{align}

Direct calculations give
\begin{align}
\Pi^+\Big( [{\mathfrak{S}}^{^{(-)}}(\mathbf{\widetilde{B}})]^{-1} {\mathfrak{S}} (\mathbf{\widetilde{P}}) \Big)(\xi')
&=\frac{i}{2\pi}\;\lim\limits_{t\to 0+}\;
\int_{-\infty}^{+\infty}\frac{\Big([{\mathfrak{S}}^{^{(-)}}(\mathbf{\widetilde{B}})]^{-1} {\mathfrak{S}} (\mathbf{\widetilde{P}})\Big)(\xi',\eta_3)\,d\eta_3}{\xi_3+i\,t-\eta_3}
\nonumber \\
&=-\frac{i}{2\pi}\;\lim\limits_{t\to 0+}\;
\int_{-\infty}^{+\infty}\frac{[{\mathfrak{S}}^{^{(-)}}(\mathbf{\widetilde{B}})]^{-1} (\xi',\eta_3)\,d\eta_3}{(\xi_3+i\,t-\eta_3)\,(|\xi'|^2+\eta_3^2)}
=
\frac{i}{2\pi}\;\lim\limits_{t\to 0+}\;
\int_{\Gamma^-}\frac{[{\mathfrak{S}}^{^{(-)}}(\mathbf{\widetilde{B}})]^{-1} (\xi',\tau)\,d\tau}{(\xi_3+i\,t-\tau)\,(|\xi'|^2+\tau^2)}\qquad\nonumber \\
&=\frac{i}{2\pi}\;\lim\limits_{t\to 0+}\;
\frac{2\pi i\,[{\mathfrak{S}}^{^{(-)}}(\mathbf{\widetilde{B}})]^{-1} (\xi',-i|\xi'|)}{(\xi_3+i\,t+i|\xi'|)\,2\,(-i|\xi'|)}
=
-\frac{ i\,[{\mathfrak{S}}^{^{(-)}}(\mathbf{\widetilde{B}})]^{-1} (\xi',-i|\xi'|)}{\,2\, |\xi'|\,\Theta^{^{(+)}}(\xi^ {'},\xi_3)}.
\label{2cc}
\end{align}
Now from \eqref{17b}  with the help of \eqref{2cc} we derive
\begin{align}
e_{1}(\xi')&=-\Pi'\left\{{\mathfrak{S}}^{^{(-)}}{(\mathbf{\widetilde{B}})}{\mathfrak{S}}^{^{(+)}}{(\mathbf{\widetilde{B}})}
[{\mathfrak{S}}^{^{(+)}}(\mathbf{\widetilde{B}})]^{-1}
\Pi^+\Big( [{\mathfrak{S}}^{^{(-)}}(\mathbf{\widetilde{B}})]^{-1} {\mathfrak{S}} (\mathbf{\widetilde{P}}) \Big)\right\}(\xi') \nonumber\\
&=-\Pi'\left\{{\mathfrak{S}}^{^{(-)}}{(\mathbf{\widetilde{B}})}\Pi^+\Big( [{\mathfrak{S}}^{^{(-)}}(\mathbf{\widetilde{B}})]^{-1} {\mathfrak{S}} (\mathbf{\widetilde{P}}) \Big)\right\}(\xi')
=\Pi'\left\{\frac{{\mathfrak{S}}^{^{(-)}}{(\mathbf{\widetilde{B}})}}{\Theta^{^{(+)}}}\right\}(\xi')\,\,\Big(\frac{ i\,[{\mathfrak{S}}^{^{(-)}}(\mathbf{\widetilde{B}})]^{-1} (\xi',-i|\xi'|)}{\,2\, |\xi'|} \Big)  \nonumber\\
&=-\frac{1}{2\;\pi}\int_ {\Gamma^-}\frac{{\mathfrak{S}}^{^{(-)}}{(\mathbf{\widetilde{B}})(\xi',\tau)}}{\tau+i|\,\xi'|}\;d\tau\,\,
\Big(\frac{ i\,[{\mathfrak{S}}^{^{(-)}}(\mathbf{\widetilde{B}})]^{-1} (\xi',-i|\xi'|)}{\,2\, |\xi'|} \Big) \nonumber \\
&
=-i\;{\mathfrak{S}}^{^{(-)}}{(\mathbf{\widetilde{B}})(\xi',-i\,|\xi'|)}\;\frac{ i\,[{\mathfrak{S}}^{^{(-)}}(\mathbf{\widetilde{B}})]^{-1} (\xi',-i|\xi'|)}{\,2\, |\xi'|}=\frac{1}{2\,|\xi'|}\,I.
\label{2ccc}
\end{align}
Quite similarly, from \eqref{17qq}  with the help of \eqref{2cc}   we get
\begin{align*} 
e_{2}(\xi')&=\widetilde{\boldsymbol\beta}\;\Pi'\Big\{[{\mathfrak{S}}^{^{(+)}}(\mathbf{\widetilde{B}})]^{-1}\Pi^+\Big( [{\mathfrak{S}}^{^{(-)}}(\mathbf{\widetilde{B}})]^{-1} {\mathfrak{S}}(\mathbf{\widetilde{P}}) \Big)\Big\}(\xi')       
=-\widetilde{\boldsymbol\beta}\;\Pi'\Big\{\frac{[{\mathfrak{S}}^{^{(+)}}{(\mathbf{\widetilde{B}})]^{-1}}}{\Theta^{^{(+)}}}\Big\}(\xi')\,\,\Big(\frac{ i\,[{\mathfrak{S}}^{^{(-)}}(\mathbf{\widetilde{B}})]^{-1} (\xi',-i|\xi'|)}{\,2\, |\xi'|} \Big)\quad  \nonumber\\
&
=-\frac{i\,\widetilde{\boldsymbol\beta}}{2\,|\xi'|}\,\Big(-\frac{1}{2\;\pi}\int_ {\Gamma^-}\frac{[{\mathfrak{S}}^{^{(+)}}{(\mathbf{\widetilde{B}})]^{-1}(\xi',\tau)}}{\tau+i|\,\xi'|}\;d\tau \Big)
[{\mathfrak{S}}^{^{(-)}}(\mathbf{\widetilde{B}})]^{-1} (\xi',-i|\xi'|)\nonumber\\
&
=\frac{i\,\widetilde{\boldsymbol\beta}}{4\,\pi\, |\xi'|}\,\int_ {\Gamma^-}[\widetilde{A}^{^{(+)}}(\xi',\tau)]^{-1}d\tau\,\,(-2\,i\,|\xi'|)\,\,[\widetilde{A}^{^{(-)}}(\xi',-i\,|\xi'|)]^{-1}
=i\,\widetilde{\boldsymbol\beta} \,\Big\{\frac{1}{2\,\pi\,i}\,\int_ {\Gamma^-}[\widetilde{A}^{^{(+)}}(\xi',\tau)]^{-1}d\tau\Big\}\,\,[\widetilde{A}^{^{(-)}}(\xi',-i\,|\xi'|)]^{-1}.\quad\nonumber
\end{align*}
Therefore due to \eqref{17}, \eqref{17pp}, \eqref{2ccc} and Lemma \ref{L3.2} we have
\begin{eqnarray}
\label{17qqq}
e_{2}(\xi')=\frac{i}{a^{^{(+)}}(\xi')}\widetilde{\boldsymbol\beta} \,{C^{^{(+)}}(\xi')}\,\,[\widetilde{A}^{^{(-)}}(\xi',-i\,|\xi'|)]^{-1},
\end{eqnarray}
where
  $\det \widetilde{\boldsymbol\beta}\neq0$,\,\,    $\det C^{^{(+)}}(\xi')\neq0$  and
$\det \widetilde{A}^{^{(-)}}(\xi',-i\,|\xi'|)\neq0$ for  all $\xi'\neq0.$
Then it is clear that
\begin{gather*}
\det e(\xi')=-\frac{i}{\left(a^{^{(+)}}(\xi')\right)^3}\det \widetilde{\boldsymbol\beta}\,\det C^{^{(+)}}(\xi')
\det [\widetilde{A}^{^{(-)}}(\xi',-i\,|\xi'|)]^{-1}\neq0
\end{gather*}
for  all $\xi'\neq0.$

Thus, we have obtained that for the operator $\mathfrak{D}$  the
\v{S}apiro-Lopatinski\u{i}  condition holds.
Therefore, the operator
\begin{eqnarray*}
\mathfrak{D} : H^{r+1}(\Omega)\times H^{r-1/2}(S)
\to
H^{r+1}(\Omega)\times {H}^{r+1/2}(S)
\end{eqnarray*}
is Fredholm for  $r>-\frac{1}{2}$.
\end{proof}

\begin{lem}\label{Step 3}
Let $\chi\in X^\infty$. The operator $ \mathfrak{D}$ given by \eqref{15-ddd} is Fredholm with zero index.
\end{lem}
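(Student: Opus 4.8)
The plan is to compute ${\rm{Ind}}\,\mathfrak{D}$ by a homotopy deforming the operator $A$ into the vector Laplacian $I\Delta$, for which the corresponding localised boundary-domain integral operator is already known, from the scalar theory, to be invertible. For $t\in[0,1]$ set $A_t:=(1-t)\,I\Delta+t\,A$, i.e.\ the operator \eqref{L-1} with coefficients $(1-t)\,\delta_{pq}\delta_{kj}+t\,a_{kj}^{pq}(x)$; these are real and $C^\infty$, satisfy the symmetry relations required in \eqref{L-1}, and by \eqref{1-d} the symbol $A_t(y,\xi)=(1-t)|\xi|^2I+t\,A(y,\xi)$ is uniformly positive definite, so $A_t$ is for every $t$ a uniformly strongly elliptic self-adjoint operator of the class studied in Section~2. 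Let $\mathbf{B}_t=\boldsymbol\beta_t+\mathbf{N}_t$ and $\mathbf{N}_t^{+}$ be the operators built from $A_t$ exactly as $\mathbf{B}$ and $\mathbf{N}^{+}$ are built from $A$ (the potentials $V$ and $\mathcal{V}$ in \eqref{15-ddd} come from the Laplace parametrix $P$ and do not depend on $A$), and let $\mathfrak{D}_t$ be the operator obtained from $\mathfrak{D}$ by replacing $\mathbf{B}$ and $\mathbf{N}^{+}$ with $\mathbf{B}_t$ and $\mathbf{N}_t^{+}$. Since the coefficients depend linearly on $t$, the map $t\mapsto\mathfrak{D}_t$ is norm continuous, $\mathfrak{D}_1=\mathfrak{D}$, and $\mathfrak{D}_0$ is the localised boundary-domain integral operator of the Dirichlet problem for $I\Delta$ with localising function $\chi$.

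First I would check that $\mathfrak{D}_t$ is Fredholm for every $t\in[0,1]$ by repeating the arguments of Lemmas~\ref{Step 1} and \ref{Step 2} with $A_t$ in place of $A$. The block $r_{_{\!\Omega}}\mathbf{B}_t\mathring E$ is a singular integral operator with even rational principal symbol $|\xi|^{-2}A_t(y,\xi)$, which is positive definite; hence Lemmas~\ref{L3.1} and \ref{Step 1} apply to $A_t$ and give local Fredholmness at interior points. At a boundary point the \v{S}apiro--Lopatinski\u{i} symbol $e^{(t)}(\xi')$ is obtained from formula \eqref{16} with $A_t$; its summand $e_1^{(t)}(\xi')$ equals $\frac{1}{2|\xi'|}\,I$ \emph{independently of the coefficients}, this being exactly the content of the computation \eqref{2cc}--\eqref{2ccc}, which uses only the factorisation \eqref{FF-1}--\eqref{FF-1b} available for any strongly elliptic self-adjoint operator. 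Therefore $e_1^{(t)}(\xi')$ cancels the term $\mathfrak{S}(\mathcal{\widetilde{V}})(\xi')=\frac{1}{2|\xi'|}\,I$, and as in \eqref{17qqq},
\[
e^{(t)}(\xi')=e_2^{(t)}(\xi')=\frac{i}{a^{^{(+)}}_{t}(\xi')}\,\widetilde{\boldsymbol\beta}_{t}\,C^{^{(+)}}_{t}(\xi')\,\big[\widetilde{A}^{^{(-)}}_{t}(\xi',-i|\xi'|)\big]^{-1},
\]
where $\det\widetilde{\boldsymbol\beta}_{t}\neq0$ (by \eqref{2.24} and \eqref{1-d}, $\boldsymbol\beta_{t}(y)=\frac{1}{3}\sum_{k=1}^{3}A_t(y,\mathbf{e}_k)$ is a sum of symmetric positive definite matrices, $\mathbf{e}_k$ being the coordinate unit vectors), $\det C^{^{(+)}}_{t}(\xi')\neq0$ by Lemma~\ref{L3.2} applied to $A_t$, $a^{^{(+)}}_{t}(\xi')\neq0$ by \eqref{Factor-A-1}, and $\det\widetilde{A}^{^{(-)}}_{t}(\xi',-i|\xi'|)\neq0$ because $\text{Im}(-i|\xi'|)<0$. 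Hence $\det e^{(t)}(\xi')\neq0$ for all $\xi'\neq0$ and all $t\in[0,1]$, the \v{S}apiro--Lopatinski\u{i} condition holds along the whole homotopy, each $\mathfrak{D}_t$ is Fredholm, and by homotopy invariance of the index ${\rm{Ind}}\,\mathfrak{D}={\rm{Ind}}\,\mathfrak{D}_1={\rm{Ind}}\,\mathfrak{D}_0$.

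It remains to identify ${\rm{Ind}}\,\mathfrak{D}_0$. For $t=0$ we have $A_0=I\Delta$, $\boldsymbol\beta_0=I$, and $\mathbf{B}_0$, $\mathbf{N}_0^{+}$, $V$, $\mathcal{V}$ all act componentwise with one and the same scalar kernel; after permuting the components, $\mathfrak{D}_0$ becomes the direct sum of three identical copies of the \emph{scalar} localised boundary-domain integral operator of the Dirichlet problem for the Laplace operator with localising function $\chi$, which is invertible by \cite{CMN-IEOT2013} (see also \cite{CMN-1}--\cite{CMN-IEOT2013}). Consequently ${\rm{Ind}}\,\mathfrak{D}_0=0$, whence ${\rm{Ind}}\,\mathfrak{D}=0$. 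The only genuinely delicate point is the persistence of the \v{S}apiro--Lopatinski\u{i} condition along the deformation; this is the reason for homotoping through the coefficients of $A$ rather than, e.g., by scaling the off-diagonal blocks of $\mathfrak{D}$ — the coefficient homotopy preserves the coefficient-free identity $e_1^{(t)}(\xi')=\mathfrak{S}(\mathcal{\widetilde{V}})(\xi')$ and thereby reduces the verification to the non-vanishing of $\det\boldsymbol\beta_t$ and of the factor determinants, all of which are immediate.
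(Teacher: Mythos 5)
Your proof is correct, but it takes a genuinely different homotopy from the one in the paper. The paper keeps $\boldsymbol\beta$, $V$, $\mathcal{V}$ fixed and multiplies only $\mathbf{N}$ and $\mathbf{N}^{+}$ by $t$, so that
\[
\mathfrak{D}_t=\left[\begin{array}{cc} r_{_{\!\Omega}}(\boldsymbol\beta+t\,\mathbf{N})\mathring E & -r_{_{\!\Omega}}V\\ t\,\mathbf{N}^{+}\mathring E & -\mathcal V\end{array}\right],
\]
which at $t=0$ is block upper triangular with diagonal blocks $r_{_{\!\Omega}}\boldsymbol\beta\mathring E$ (multiplication by an invertible matrix) and $-\mathcal V$ (an elliptic PDO of order $-1$ on the closed surface $S$), so $\mathrm{Ind}\,\mathfrak{D}_0=0$ is immediate and self‑contained. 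You instead deform the differential operator itself, $A_t=(1-t)I\Delta+tA$, which changes both $\boldsymbol\beta_t$ and the kernel of $\mathbf{N}_t$; at $t=0$ the operator is the componentwise scalar localised Dirichlet LBDIO for the Laplacian, and you import $\mathrm{Ind}\,\mathfrak{D}_0=0$ from the scalar papers. Both homotopies preserve the \v{S}apiro--Lopatinski\u{i} condition for exactly the reason you identify (the coefficient‑independent identity $e_1^{(t)}=\mathfrak{S}(\widetilde{\mathcal V})=\tfrac{1}{2|\xi'|}I$, and the nonvanishing determinants from Lemma~\ref{L3.2}), so the index is constant along either path. What the paper's path buys is a self‑contained endpoint; what yours buys is conceptual naturality (interpolating between the PDE under study and the one whose fundamental solution underlies the parametrix). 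One small inaccuracy in your write‑up: under the present hypothesis $\chi\in X^\infty$ (not $X^\infty_+$) you are only entitled to cite the scalar theory for the Fredholm‑with‑index‑zero property at $t=0$, not for invertibility; the positivity class $X^k_+$ is what underlies injectivity in the scalar papers, while the Fredholm and index statements do not need it. For the index computation this is harmless, but the word ``invertible'' should be ``Fredholm with zero index.'' Also note that in your version the matrix appearing in $e_2^{(t)}$ is $\widetilde{\boldsymbol\beta}_t=(1-t)I+t\,\widetilde{\boldsymbol\beta}$ rather than the fixed $\widetilde{\boldsymbol\beta}$ of the paper; both are uniformly positive definite on $[0,1]$, so the \v{S}apiro--Lopatinski\u{i} determinant remains nonzero either way.
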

\begin{proof}
For  $t\in [0,1]$, let us consider the operator
\begin{equation*}
\mathfrak{D}_{t}:=
\left[
\begin{array}{ccc}
r_{_{\!\Omega }}\mathbf{B}_t\mathring E & \;\;\; -r_{_{\!\Omega }}V  \\
t\,\mathbf{N}^{+} \mathring E& - \mathcal{V} &  \\
\end{array}
\right]
\end{equation*}
with  $\mathbf{B}_t=\boldsymbol\beta+t\,\mathbf{N}$   and  establish  that it is homotopic to the operator $\mathfrak{D}=\mathfrak{D}_1$.
We have to check that for the operator $\mathfrak{D}_{\,t}$ the \v{S}apiro-Lopatinski\u{i} condition is satisfied for all $t\in [0,1]$.
Indeed, in this case the \v{S}apiro-Lopatinski\u{i} condition reads as
\begin{equation*}
\det e_t(\xi')\neq 0  \;\;\;\;\; \text{for  all}\;\;\;\;\; \xi'\neq 0,
\end{equation*}
where (cf. \eqref{16})
\begin{align}
e_t(\xi')=
-\Pi'\Big\{\Big({\mathfrak{S}} (\mathbf{\widetilde{B}}_t)-
\widetilde{\boldsymbol\beta}\Big)[{\mathfrak{S}}^{^{(+)}}(\mathbf{\widetilde{B}}_t)]^{-1}
\Pi^+
\Big( [{\mathfrak{S}}^{^{(-)}}(\mathbf{\widetilde{B}}_t)]^{-1} {\mathfrak{S}} (\mathbf{\widetilde{P}}) \Big)\Big\}(\xi')
-{\mathfrak{S}} {}(\mathcal{\widetilde{V}})(\xi')=
e_{t}^{(1)}(\xi')+e_{t}^{(2)}(\xi')
-{\mathfrak{S}} (\mathcal{\widetilde{V}})(\xi')
\label{1-m}
\end{align}
with
\begin{align}
e_{t}^{(1)}(\xi')&=-\Pi'\Big\{{\mathfrak{S}} (\mathbf{\widetilde{B}}_t)
[{\mathfrak{S}}^{^{(+)}}(\mathbf{\widetilde{B}}_t)]^{-1}
\Pi^+\Big( [{\mathfrak{S}}^{^{(-)}}(\mathbf{\widetilde{B}}_t)]^{-1} {\mathfrak{S}} (\mathbf{\widetilde{P}}) \Big)\Big\}(\xi')
=\frac{1}{2\,|\xi'|}\,I,
\label{7a} \\
\label{7aa}
e_{t}^{(2)}(\xi')&=\widetilde{\boldsymbol\beta}\,\Pi'\Big\{[{\mathfrak{S}}^{^{(+)}}(\mathbf{\widetilde{B}}_t)]^{-1}\Pi^+\Big( [{\mathfrak{S}}^{^{(-)}}(\mathbf{\widetilde{B}}_t )]^{-1} {\mathfrak{S}} (\mathbf{\widetilde{P}}) \Big)\Big\}(\xi')      ,\\
\label{7aaa}
{\mathfrak{S}} {}(\mathcal{\widetilde{V}})(\xi') &=\frac{1}{2|\xi'|}\,I.
\end{align}
By direct calculations we get
\begin{align}
e_t^{(2)}(\xi')&=\widetilde{\boldsymbol\beta}\,\Pi'\Big\{[{\mathfrak{S}}^{^{(+)}}(\mathbf{\widetilde{B}}_t)]^{-1}\Pi^+\Big( [{\mathfrak{S}}^{^{(-)}}(\mathbf{\widetilde{B}}_t )]^{-1} {\mathfrak{S}} (\mathbf{\widetilde{P}}) \Big)\Big\}(\xi')\nonumber  \\
&
=-\widetilde{\boldsymbol\beta}\;\Pi'\Big\{\frac{[{\mathfrak{S}}^{^{(+)}}{(\mathbf{\widetilde{B}}_t)]^{-1}}}{\Theta^{^{(+)}}}\Big\}(\xi')\,\,\Big(\frac{ i\,[{\mathfrak{S}}^{^{(-)}}(\mathbf{\widetilde{B}}_t)]^{-1} (\xi',-i|\xi'|)}{\,2\, |\xi'|} \Big)  \nonumber\\
&
=-\frac{i\,\widetilde{\boldsymbol\beta}}{2\,|\xi'|}\,\Big(-\frac{1}{2\;\pi}\int_ {\Gamma^-}\frac{[{\mathfrak{S}}^{^{(+)}}{(\mathbf{\widetilde{B}}_t)]^{-1}(\xi',\tau)}}{\tau+i|\,\xi'|}\;d\tau \Big)
[{\mathfrak{S}}^{-} (\mathbf{\widetilde{B}}_t)]^{-1} (\xi',-i|\xi'|)\nonumber\\
&
=\frac{i\,\widetilde{\boldsymbol\beta}}{4\,\pi\, |\xi'|}\,\int_ {\Gamma^-}[\widetilde{A}^{^{(+)}}_t(\xi',\tau)]^{-1}d\tau\,\,(-2\,i\,|\xi'|)\,\,[\widetilde{A}^{^{(-)}}_t(\xi',-i\,|\xi'|)]^{-1}
\nonumber\\
&
=i\,\widetilde{\boldsymbol\beta} \,\Big\{\frac{1}{2\,\pi\,i}\,\int_ {\Gamma^-}[\widetilde{A}^{^{(+)}}_t(\xi',\tau)]^{-1}d\tau\Big\}\,\,[\widetilde{A}^{^{(-)}}_t(\xi',-i\,|\xi'|)]^{-1},
\label{d}
\end{align}
where $\widetilde{A}_t(\xi)=(1-t)\,|\xi|^2\,\widetilde{\boldsymbol\beta} + t\,\widetilde{A}(\xi)$,
$\widetilde{A}_t(\xi',\xi_3)=\widetilde{A}^{^{(-)}}_t(\xi',\xi_3)\widetilde{A}^{^{(+)}}_t(\xi',\xi_3)$ and  $\widetilde{A}^{^{(\pm)}}_t(\xi',\xi_3)$ are the "plus" and "minus"
polynomial matrix factors in $\xi_3$ of the polynomial symbol matrix $\widetilde{A}_t(\xi',\xi_3)$.
Due to \eqref{1-m}, \eqref{7a}, \eqref{7aaa}, \eqref{d} and Lemma \ref{L3.2} we have
\begin{equation*}
e_{t}^{(2)}(\xi')=\frac{i}{a_t^{^{(+)}}(\xi')}\widetilde{\boldsymbol\beta} \,{C_t^{^{(+)}}(\xi')}\,\,[\widetilde{A}^{^{(-)}}_t(\xi',-i\,|\xi'|)]^{-1},
\end{equation*}
where $ C_t^{^{(+)}}(\xi')=\big[\, c_{_{ij,t}}^{^{(+)}}(\xi')\,\big]_{ij=1}^3$ and  $c_{_{ij,t}}^{^{(+)}},$ $i,j=1,2,3,$ are  main coefficients of the   co-factors $p_{_{ij,t}}^{^{(+)}}(\xi',\tau)$ of the polynomial matrix  $\widetilde{A}^{^{(+)}}_t(\xi',\tau)$  and $a^{^{(+)}}$ the coefficient at $\tau^3$ in the determinant $\det{\widetilde{A}^{^{(+)}}_t(\xi',\tau)}$.
In addition,  $$
\det \widetilde{\boldsymbol\beta}\neq0,\quad    \det C_t^{^{(+)}}(\xi')\neq0,
\quad \det \widetilde{A}^{^{(-)}}_t(\xi',-i\,|\xi'|)\neq0
$$
for  all  $\xi'\neq0$ and $t\in [0,1]$.

Then it is clear that
\begin{gather*}
\det e_t(\xi')=-\frac{i}{\left(a^+_t(\xi') \right)^3}\det\widetilde{\boldsymbol\beta}\,
 \det C_t^{^{(+)}}(\xi')  \det \,[\widetilde{A}^{^{(-)}}_t(\xi',-i\,|\xi'|)]^{-1}\neq 0
\end{gather*}
for  all
$\xi'\neq0$ and for all $t\in [0,1]$,
which implies  that for the operator $\mathfrak{D}_t$  the  \v{S}apiro-Lopatinski\u{i} condition  is satisfied.\\
Therefore the operator
\begin{eqnarray*}
\mathfrak{D}_t\; : H^{r+1}(\Omega)\times H^{r-1/2}(S)
\to
H^{r+1}(\Omega)\times {H}^{r+1/2}(S)
\end{eqnarray*}
is Fredholm for all  $r>-\frac{1}{2}$  and  for all $t\in [0,1]$.
Consequently,
\begin{eqnarray*}
{\rm{Ind}}\,\mathfrak{D}={\rm{Ind}}\,\mathfrak{D}_{1}
={\rm{Ind}}\,\mathfrak{D}_t={\rm{Ind}}\,\mathfrak{D}_0=0.
\end{eqnarray*}
\end{proof}

\subsection*{Theorem~\ref{th2.2} Proof}
Since by Lemma~\ref{Step 3} the operator $\mathfrak{D}$ is Fredholm
with zero index, its injectivity implies the invertibility.
 Thus it remains to prove that the null space of the operator $\mathfrak{D}$ is trivial
 for $r>-\frac{1}{2}$.
Assume that  $U=(u,\psi)^\top\in H^{r+1}(\Omega)\times H^{r-1/2}(S)$ is a solution to the homogeneous equation
\begin{eqnarray}
\label{18-7ddd}
\mathfrak{D}\,U=0.
\end{eqnarray}
 The operator
\begin{eqnarray*}
\mathfrak{D}\, : \,H^{r+1}(\Omega)\times H^{r-1/2}(S)
\to
H^{r+1}(\Omega)\times {H}^{r+1/2}(S)
\end{eqnarray*}
is Fredholm with index zero for all $r>-\frac{1}{2}$.
It is well known that then there exists a left regularizer
$\mathfrak{L}$ of the operator $\mathfrak{D}$,
\begin{eqnarray}
\label{18-8dd}
\mathfrak{L} \;: H^{r+1}(\Omega)\times H^{r+1/2}(S)
\to H^{r+1}(\Omega)\times {H}^{r-1/2}(S),
\end{eqnarray}
such that
$$
\mathfrak{L}\, \mathfrak{D}=I+\mathfrak{T},
$$
where    $\mathfrak{T}$  is the operator of order  $-1$   (cf. proofs of Theorems 22.1 and 23.1 in \cite{Esk}), i.e.,
\begin{eqnarray}
\label{18-9d}
\mathfrak{T} \;: H^{r+1}(\Omega)\times H^{r-1/2}(S)
\to
H^{r+2}(\Omega)\times {H}^{r+1/2}(S).
\end{eqnarray}
Therefore, from \eqref{18-7ddd}    we have
\begin{eqnarray}
\label{18-9d-1}
\mathfrak{L}\,\mathfrak{D}\,U=U+\mathfrak{T} U=0.
\end{eqnarray}

From  \eqref{18-9d} we see that
$$
\mathfrak{T}\,U\in H^{r+2}(\Omega)\times H^{r+1/2}(S).
$$
Consequently, in view of \eqref{18-9d-1}
\begin{eqnarray}
\label{pp}
U=(u,\psi)^\top\in H^{r+2}(\Omega)\times H^{r+1/2}(S).
\end{eqnarray}
If $r\geq0$, this implies $u\in H^{1,\,0}(\Omega,A)$.  If  $-\frac{1}{2}<r<0$, we iterate the above reasoning for $U$ satisfying \eqref{pp} to obtain
\begin{eqnarray}
\label{ppp}
U=(u,\psi)^\top\in H^{r+3}(\Omega)\times H^{r+3/2}(S)
\end{eqnarray}
which again implies $u\in H^{1,\,0}(\Omega,A)$. Then  we can apply the equivalence Theorem \ref{th-D-eq} to conclude that a solution $U=(u,\psi)^\top$ to the homogeneous equation
\eqref{18-7ddd}  is trivial, i.e.,
\begin{eqnarray*}
u=0 \;\;\;\; \text{in}\;\;\;\;     \Omega,\qquad \psi=0\;\;\; \text{on}\;\;\;\;      S.
\end{eqnarray*}
Thus,   Ker\,$\mathfrak{D}=\{0\}$    in    the   class
 $H^{r+1}(\Omega)\times H^{r-1/2}(S)$   and   therefore
the operator
\begin{eqnarray*}
\mathfrak{D}\;  : H^{r+1}(\Omega)\times H^{r-1/2}(S)
\to
H^{r+1}(\Omega)\times {H}^{r+1/2}(S)
\end{eqnarray*}
is invertible for all $r>-\frac{1}{2}$.
\qed\\

For localizing function $\chi$ of finite smoothness we have the following result.
\begin{cor}
\label{c2.3}
Let a localising function $\chi\in X_+^3$.   Then the  operator
\begin{eqnarray*}
\mathfrak{D}\,:\, H^{1}(\Omega)\times H^{-1/2}(S)
\to  H^{1}(\Omega)\times {H}^{1/2}(S)
\end{eqnarray*}
is  invertible.
\end{cor}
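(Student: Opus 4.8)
The plan is to re-examine the proof of Theorem~\ref{th2.2} in the single case $r=0$ and to check that every step survives under the weaker assumption $\chi\in X^3_+$. The key observation is that the smoothness of the localising function enters the analysis only through the lower-order (remainder) parts of the operators $\mathcal N,\,V,\,W,\,\mathcal P$ and their boundary traces, and never through their principal homogeneous symbols, which depend on $\chi$ solely via the normalisation $\chi(0)=1$ (see \eqref{2.34-1}, \eqref{2.38}, \eqref{2.39}, \eqref{ssss} and the symbol formulae entering \eqref{16}). Hence all the symbol manipulations in Lemmas~\ref{Step 2} and \ref{Step 3}, in particular the computation of the \v{S}apiro-Lopatinski\u{i} determinant $\det e(\xi')$ in \eqref{2cc}--\eqref{17qqq} and of its homotopic version $\det e_t(\xi')$, are word for word the same, while Lemma~\ref{L3.1} is already established for $\chi\in X^k_+$ with $k\ge 2$.

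Next I would collect the mapping properties that genuinely use the smoothness of $\chi$ and note that they hold for $\chi\in X^3$: the operators with kernels $R(x,y),\,R^{(1)}(x,y),\,R_\Delta(x-y)$ smooth by one order in the range of Sobolev indices relevant here (cf.\ \eqref{2.40}, \eqref{2.50-1} and \cite[Theorem 5.4]{CMN-Loc2}), and the localised layer and volume potentials retain the mapping properties of Appendix~B. This is exactly what underlies the decomposition \eqref{o-15}, with $\mathbf{\widetilde{K}}$ of small norm and $\mathbf{\widetilde{T}}$ smoothing by one order; hence the local-principle argument of Lemma~\ref{Step 2} and the index computations of Lemmas~\ref{Step 1} and \ref{Step 3} remain valid for $\chi\in X^3_+$ and $r=0$. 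Consequently the operator $\mathfrak D:H^1(\Omega)\times H^{-1/2}(S)\to H^1(\Omega)\times H^{1/2}(S)$ is Fredholm with zero index, and it suffices to prove its injectivity.

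For injectivity I would argue as in the proof of Theorem~\ref{th2.2}: if $U=(u,\psi)^\top\in H^1(\Omega)\times H^{-1/2}(S)$ satisfies $\mathfrak D U=0$, pick a left regulariser $\mathfrak L$ with $\mathfrak L\mathfrak D=I+\mathfrak T$, $\mathfrak T$ of order $-1$. The crucial point is that for $r=0$ a \emph{single} smoothing step suffices, which is all that $\chi\in X^3$ guarantees: from $U=-\mathfrak T U\in H^2(\Omega)\times H^{1/2}(S)$ one gets $u\in H^2(\Omega)\subset H^{1,0}(\Omega,A)$, so the equivalence Theorem~\ref{th-D-eq} — whose hypotheses only require $\chi\in X^3_+$ — applies and forces $u=0$ in $\Omega$, $\psi=0$ on $S$. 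Thus $\mathrm{Ker}\,\mathfrak D=\{0\}$, and a zero-index Fredholm operator with trivial kernel is invertible.

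The anticipated difficulty is purely technical bookkeeping: one must verify that with only $\chi\in X^3$ every remainder term appearing in the decompositions behind Lemmas~\ref{Step 1}--\ref{Step 3} and in the regulariser identity is genuinely smoothing by one full order, so that both the Fredholm argument and the single bootstrap $H^1\to H^2$ really go through. This is precisely the content of the mapping properties cited above, which hold for $k\ge 2$ (respectively $k\ge 3$), whereas the unbounded iteration used in Theorem~\ref{th2.2} for $-\frac{1}{2}<r<0$ is the reason the present statement is confined to $r=0$.
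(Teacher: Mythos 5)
Your proposal is correct and takes essentially the same route as the paper, which simply asserts that the proofs of Lemmas~\ref{Step 1}--\ref{Step 3} and Theorem~\ref{th2.2} go through verbatim with $r=0$ using the finite-smoothness mapping properties of Appendix~B. Your additional remarks — that the symbol calculations are $\chi$-independent and that a single bootstrap $H^1\to H^2$ is all the injectivity argument needs, which explains the restriction to $r=0$ — are just a more explicit account of what the paper leaves to the reader.
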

\begin{proof} 
It can be done by word for word arguments employed in the proofs of Lemmas \ref{Step 1}--\ref{Step 3} and Theorem~\ref{th2.2},  with $r=0$ and using the mapping properties of the localized potentials for a localizing function of finite smoothness (see Appendix B). 
\end{proof}

Lemma~\ref{l2.1}, Theorem~\ref{th-D-eq} and Corollaries \ref{C1} and \ref{c2.3} imply the following assertion.
\begin{cor}
Let a localising function $\chi\in X_+^3$.   Then the  operator
\begin{eqnarray*}
\mathfrak{D}\,:\, H^{1,0}(\Omega,A)\times H^{-1/2}(S)
\to  H^{1,0}(\Omega,\Delta)\times {H}^{1/2}(S)
\end{eqnarray*}
is  invertible.
\end{cor}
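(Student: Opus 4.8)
The plan is to upgrade the invertibility already obtained on the larger spaces in Corollary~\ref{c2.3} by means of the elliptic regularity supplied by Lemma~\ref{l2.1} and Corollary~\ref{C1}. First I would check that $\mathfrak D$ maps $H^{1,0}(\Omega,A)\times H^{-1/2}(S)$ boundedly into $H^{1,0}(\Omega,\Delta)\times H^{1/2}(S)$. Since $\mathbf B=\boldsymbol\beta+{\bf N}$, for $u\in H^{1,0}(\Omega,A)$ one has $r_{_{\!\Omega}}\mathbf B\mathring E u=(\boldsymbol\beta+\mathcal N)u$ by \eqref{N0} and \eqref{2.40}, which belongs to $H^{1,0}(\Omega,\Delta)$ with norm controlled by $\|u\|_{H^{1,0}(\Omega,A)}$ by Corollary~\ref{C1}; the localized single layer potential maps $H^{-1/2}(S)$ into $H^{1,0}(\Omega,\Delta)$ by Theorem~\ref{tB.4}; and ${\bf N}^+\mathring E:H^{1}(\Omega)\to H^{1/2}(S)$ together with $\mathcal V:H^{-1/2}(S)\to H^{1/2}(S)$ are bounded by Theorems~\ref{tB.1} and \ref{tB.7}. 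Equipped with the graph norms of $A$ and of $\Delta$, the two product spaces are Banach spaces, so by the bounded inverse theorem it suffices to show that this $\mathfrak D$ is bijective.

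Injectivity follows from Corollary~\ref{c2.3}: the inclusion $H^{1,0}(\Omega,A)\times H^{-1/2}(S)\subset H^{1}(\Omega)\times H^{-1/2}(S)$ is continuous and $\mathfrak D$ is already injective on the larger space (alternatively, $\mathfrak D U=0$ with $U\in H^{1,0}(\Omega,A)\times H^{-1/2}(S)$ is the homogeneous LBDIE system \eqref{2.52}--\eqref{2.53} with $f=0$, $\varphi_0=0$, whose only solution is trivial by the equivalence Theorem~\ref{th-D-eq}). The main step is surjectivity. Let $(G_1,G_2)\in H^{1,0}(\Omega,\Delta)\times H^{1/2}(S)\subset H^{1}(\Omega)\times H^{1/2}(S)$ be arbitrary. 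By Corollary~\ref{c2.3} there is a unique $(u,\psi)\in H^{1}(\Omega)\times H^{-1/2}(S)$ with $\mathfrak D(u,\psi)=(G_1,G_2)$, and its first component reads $(\boldsymbol\beta+\mathcal N)u-V\psi=G_1$ in $\Omega$. This is exactly equation \eqref{2.44} of Lemma~\ref{l2.1} with $F:=G_1\in H^{1,0}(\Omega,\Delta)$, $f:=0\in H^{0}(\Omega)$, $\varphi:=0\in H^{1/2}(S)$, the given $\psi\in H^{-1/2}(S)$, and $\chi\in X_+^3\subset X^3$; hence Lemma~\ref{l2.1} yields $u\in H^{1,0}(\Omega,A)$. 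Therefore $(u,\psi)\in H^{1,0}(\Omega,A)\times H^{-1/2}(S)$ and, by the boundedness established in the first paragraph, $\mathfrak D(u,\psi)=(G_1,G_2)$ holds in the target space, so $\mathfrak D$ is onto.

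Combining the two, $\mathfrak D$ is a bounded bijection between the Banach spaces $H^{1,0}(\Omega,A)\times H^{-1/2}(S)$ and $H^{1,0}(\Omega,\Delta)\times H^{1/2}(S)$, hence invertible; equivalently, the inverse of the operator of Corollary~\ref{c2.3} restricts to the asserted bounded inverse. The only non-routine point is the surjectivity argument, namely recognising that once Corollary~\ref{c2.3} supplies a preimage $(u,\psi)$ in the larger space, the first LBDIE of the system falls precisely under the hypotheses of Lemma~\ref{l2.1}, which promotes the regularity of $u$ from $H^{1}(\Omega)$ to $H^{1,0}(\Omega,A)$; everything else is an assembly of Corollary~\ref{C1} and the mapping properties of the localized potentials collected in Appendix~B.
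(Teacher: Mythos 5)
Your proof is correct and is essentially the argument the paper intends: the paper itself only states that Lemma~\ref{l2.1}, Theorem~\ref{th-D-eq}, and Corollaries~\ref{C1} and \ref{c2.3} imply the result, and you supply exactly the missing details (boundedness via Corollary~\ref{C1} and the mapping properties of the localized potentials, injectivity by restriction from Corollary~\ref{c2.3} or by the equivalence Theorem~\ref{th-D-eq}, surjectivity by pulling back a preimage from Corollary~\ref{c2.3} and upgrading its regularity with Lemma~\ref{l2.1}). The only minor flaw is the citation for the boundedness of ${\bf N}^{+}\mathring E$ and $\mathcal V$: these follow from \eqref{2.40} together with the trace theorem and from Theorem~\ref{tB.6} respectively, not from Theorems~\ref{tB.1} and~\ref{tB.7}.
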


\appendix
\section{Classes of localising functions}
 Here we present the classes of localizing functions used in the main text (see \cite{CMN-Loc2} for details).
 \begin{defn}
\label{dA.1}
We say $\chi \in X^k$ for integer $k\geq 0$  if $\chi
(x)=\breve{\chi}(|x|)$, $\breve{\chi} \in W_1^k(0,\infty)$ and
$\varrho\breve{\chi}(\varrho) \in L_1(0,\infty)$.
We say $\chi \in X^k_+$ for integer $k\geq 1$  if $\chi \in X^k$,
${\chi} (0)=1$ and  $\sigma_\chi(\omega)>0$ for all $\omega\in\mathbb{R}$, where
 \begin{equation}\label{schi}
    \sigma_\chi(\omega):=\left\{
    \begin{array}{l}
  \displaystyle  \frac{\hat{\chi}_s(\omega)}{\omega}>0\;\;\text{for}\;\; \omega\in\mathbb{R} \setminus\{0\},\\[3mm]
   \displaystyle \int _{0}^{\infty} \varrho \breve{\chi}\,(\varrho )  \,d\varrho\;\; \text{for}\;\;\omega=0,
    \end{array}
    \right.
     \end{equation}
and  $\hat{\chi}_s(\omega)$ denotes the sine-transform of the function $\breve{\chi}$
\begin{equation}
\label{chi-sin}
 \hat{\chi}_s(\omega):=\int\limits_{0}^{\infty}
  \breve{\chi}\,(\varrho ) \;\sin(\varrho\,\omega) \,d\varrho.
  \end{equation}
\end{defn}
Evidently, we have the following imbeddings: $X^{k_1}\subset
X^{k_2}$ and $X^{k_1}_+\subset X^{k_2}_+$ for $k_1 > k_2$.
The class $X^k_+$ is defined in terms of the sine-transform. The
following lemma from \cite{CMN-Loc2} provides an easily verifiable sufficient condition for
non-negative non-increasing functions to belong to this class.
\begin{lem}
\label{lA.2}
Let $k\ge 1$. If $\chi \in X^k$, $\breve{\chi} (0)=1$,
$\breve{\chi}(\varrho)\geq 0$ for all $\varrho\in (0,\infty)$, and
$\breve{\chi}$ is a non-increasing function on $[0,+\infty)$, then
${\chi} \in X^k_+$.
\end{lem}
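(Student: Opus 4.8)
The plan is to verify directly the three conditions in Definition~\ref{dA.1} characterizing membership in $X^k_+$: that $\chi\in X^k$, that $\chi(0)=1$, and that $\sigma_\chi(\omega)>0$ for every $\omega\in\mathbb{R}$. The first is assumed, and the second is immediate from $\chi(0)=\breve\chi(|0|)=\breve\chi(0)=1$. So the whole task reduces to establishing positivity of $\sigma_\chi$, which I would split into the case $\omega=0$ and the case $\omega\ne 0$.

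For $\omega=0$ one has $\sigma_\chi(0)=\int_0^\infty\varrho\,\breve\chi(\varrho)\,d\varrho$; this is finite since $\varrho\breve\chi(\varrho)\in L_1(0,\infty)$ and non-negative since $\breve\chi\ge 0$. To get strict positivity I would use that $\breve\chi\in W_1^k(0,\infty)$ with $k\ge 1$ admits an absolutely continuous representative with a finite limit at the origin (because $\breve\chi'\in L_1$), so after identifying $\breve\chi$ with that representative it is continuous with $\breve\chi(0)=1$; hence $\breve\chi(\varrho)\ge 1/2$ on some $[0,\delta]$, whence $\sigma_\chi(0)\ge\int_0^\delta\varrho/2\,d\varrho=\delta^2/4>0$.

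For $\omega\ne 0$, since $\hat\chi_s$ is odd and $\sigma_\chi(\omega)=\hat\chi_s(\omega)/\omega$, it suffices to prove $\hat\chi_s(\omega)>0$ for $\omega>0$. I would decompose the absolutely convergent integral $\hat\chi_s(\omega)$ (note $|\breve\chi(\varrho)\sin(\omega\varrho)|\le\breve\chi(\varrho)\in L_1$) into the sum over $n\ge 0$ of the integrals over the half-periods $[n\pi/\omega,(n+1)\pi/\omega]$, pair the $n$-th and $(n+1)$-th terms for $n$ even, and in the odd term substitute $\varrho\mapsto\varrho+\pi/\omega$. Each pair then equals $\int_{2k\pi/\omega}^{(2k+1)\pi/\omega}\bigl[\breve\chi(\varrho)-\breve\chi(\varrho+\pi/\omega)\bigr]\sin(\omega\varrho)\,d\varrho$, which is $\ge 0$ because $\sin(\omega\varrho)\ge 0$ on that interval and $\breve\chi$ is non-increasing. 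Summing, $\hat\chi_s(\omega)\ge 0$.

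The remaining, and only delicate, point is upgrading this to a strict inequality. If $\hat\chi_s(\omega)=0$, then all the pair-integrals being non-negative and summing to zero, each must vanish; since each integrand is non-negative and $\sin(\omega\varrho)>0$ in the open half-period, this forces $\breve\chi(\varrho)=\breve\chi(\varrho+\pi/\omega)$ for a.e.\ $\varrho\in(2k\pi/\omega,(2k+1)\pi/\omega)$ for every $k\ge 0$, hence for all such $\varrho$ and the endpoints by continuity. Starting from $\breve\chi(0)=1$ and $\breve\chi\le 1$ (non-increasing from its value at $0$), I would propagate by induction: the relation on $(0,\pi/\omega)$ gives $\breve\chi(\pi/\omega)=1$, monotonicity forces $\breve\chi\equiv 1$ on $[0,\pi/\omega]$, the relation carries this to $[\pi/\omega,2\pi/\omega]$, and the relation on the later half-periods together with continuity at the break points continues the induction, yielding $\breve\chi\equiv 1$ on $[0,\infty)$ — contradicting $\varrho\breve\chi(\varrho)\in L_1(0,\infty)$. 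Hence $\hat\chi_s(\omega)>0$, so $\sigma_\chi(\omega)>0$ for all $\omega\ne 0$; combined with the $\omega=0$ case and the first two conditions, $\chi\in X^k_+$. I expect this rigidity/induction bookkeeping to be the main obstacle, everything else being routine.
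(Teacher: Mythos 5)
Your argument is correct, but note that the paper itself does not prove Lemma~\ref{lA.2}: it is quoted verbatim from \cite{CMN-Loc2}, so there is no in-paper proof to compare against. Measured on its own terms, your verification of the three conditions of Definition~\ref{dA.1} is sound. The $\omega=0$ case is handled properly (continuity of the absolutely continuous representative of $\breve\chi\in W^k_1$ near $0$ gives $\breve\chi\ge 1/2$ on some $[0,\delta]$, hence $\sigma_\chi(0)>0$), the half-period pairing correctly yields $\hat\chi_s(\omega)\ge0$ from monotonicity, and the rigidity argument — each pair integral vanishing forces $\breve\chi(\varrho)=\breve\chi(\varrho+\pi/\omega)$ on every even half-period, which propagates $\breve\chi\equiv1$ along $[0,\infty)$ and contradicts $\varrho\breve\chi(\varrho)\in L_1(0,\infty)$ — is complete. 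The only alternative worth mentioning is the shorter standard route: since $\breve\chi$ is non-increasing, non-negative and integrable, $\breve\chi(\varrho)\to0$ as $\varrho\to\infty$, and an integration by parts gives
\begin{equation*}
\sigma_\chi(\omega)=\frac{1}{\omega^2}\int_0^\infty\bigl(1-\cos(\omega\varrho)\bigr)\bigl(-\breve\chi{}'(\varrho)\bigr)\,d\varrho ,
\end{equation*}
which is manifestly non-negative, and vanishes only if $\breve\chi{}'=0$ a.e.\ (the zero set of $1-\cos(\omega\,\cdot)$ being discrete), i.e.\ only if $\breve\chi\equiv1$, the same contradiction you reach. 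This buys a one-line positivity statement and a cleaner rigidity step, at the cost of invoking the boundary term and the a.e.\ sign of $\breve\chi{}'$; your pairing argument avoids differentiating $\breve\chi$ altogether and would work for merely monotone (not necessarily $W^1_1$) profiles.
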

The following (and other) examples for $ \chi $ are presented in \cite{CMN-Loc2},
\begin{eqnarray}
\label{chi1}
  && \chi_{_1k} (x)=
 \left\{
 \begin{array}{lll}
   \displaystyle \Big[\,
 1-\frac{|x|}{\varepsilon}\,\Big]^{k} & \text{for} & |x|< \varepsilon
 ,\\
0 & \text{for} & |x|\geq \varepsilon,
 \end{array}
 \right. \\[3mm]
 \label{3.3-5d}
 && \chi_{_2} (x)=
 \left\{
 \begin{array}{lll}
    \displaystyle \exp \Big[\,{\frac{|x|^2}{|x|^2-\varepsilon^2}}\,\Big] &
    \text{for} & |x|< \varepsilon
  ,\\
0 & \text{for} & |x|\geq \varepsilon,
 \end{array}
 \right.
 \end{eqnarray}
One can observe that $\chi_{_1k} \in X^k_+$  for $k\geq1$, while $\chi_{_2} \in
X^{\infty}_+$ due to Lemma~\ref{lA.2}.

\section{Properties of localized potentials}\label{AB}
 Here we collect some assertions describing mapping properties of the localized potentials. The proofs coincide with or are similar to the ones in \cite{CMN-Loc2} and \cite[Appendix B]{CMN-IEOT2013} (see also \cite{HW}, Chapter 8 and the references therein).

 Let us introduce the boundary operators generated by the localized layer
 potentials associated
 with the localized parametrix $P (x-y)\equiv P_\chi (x-y)$

 \begin{eqnarray}
\label{3.11} &&
\mathcal{ V} \,g(y):=-\int _{S} P (x-y)\, g(x)\,dS_x, \;\;\;y\in S, \\[3mm]
 \label{3.12} &&
\mathcal{ W} \,g(y):=-\int _{S}
\big[\,T(x, \partial_x)\,P (x-y)\,\big]^\top\,
\, g(x)\,dS_x,  \;\;\;y\in S,  \\[3mm]
 \label{3.13} &&
\mathcal{W}^{\,\prime} \,g(y):= -\int _{S}
\big[\,T(y, \partial_y)\,P (x-y)\,\big]\, g(x)\,dS_x,\;\;\;y\in S,
 \\[3mm]
\label{3.14}&&
 \displaystyle \mathcal{ L}^{\pm } g(y):=
T^{\pm}(y, \partial_y)\,W g(y) ,\;\;\;y\in S.
\end{eqnarray}
\begin{thm}
\label{tB.1}
 The following operators are continuous
\begin{align}
 \label{B-d5}
 \mathcal{ P}
&: \widetilde{H}^s(\Omega) \to H^{s+2,s}(\Omega;\Delta),\quad
 -\frac{1}{2}< s<\frac{1}{2}, \quad \chi  \in X^{1}, \\
 \label{B-d6}
&: H^s(\Omega) \to H^{s+2,s}(\Omega;\Delta),\quad
 -\frac{1}{2}<s<\frac{1}{2}, \quad \chi  \in X^{1}, \\
 \label{B-d7}
&: H^s(\Omega) \to
H^{\frac{5}{2}-\varepsilon,\frac{1}{2}-\varepsilon}(\Omega;\Delta),
 \;\; \frac{1}{2}\le s<\frac{3}{2},\;\; \forall\, \varepsilon\in(0,1),\ 
  \chi  \in X^{2}, 
\end{align}
where $\Delta$ is the Laplace operator.
\end{thm}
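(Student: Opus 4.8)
The plan is to reduce the assertion to mapping properties of the $\mathbb{R}^3$-convolution operator ${\bf P}$ of \eqref{2.31-1d} together with the parametrix identity \eqref{2.49}. Recall that $H^{s+2,s}(\Omega;\Delta)$ denotes $\{v\in H^{s+2}(\Omega):\Delta v\in H^{s}(\Omega)\}$ with the graph norm (and likewise $H^{5/2-\varepsilon,1/2-\varepsilon}(\Omega;\Delta)$), so that continuity of $\mathcal{P}$ amounts to two things: that $\mathcal{P}$ smooths by two orders into $H^{s+2}(\Omega)$, and that $\Delta\mathcal{P}$ lands in $H^{s}(\Omega)$ (respectively in $H^{1/2-\varepsilon}(\Omega)$ when $1/2\le s<3/2$).

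First I would express $\mathcal{P}$ through ${\bf P}$ and a suitable extension, exactly as was done for $\mathcal{N}$ in \eqref{N0}--\eqref{N1}: since the kernel $P$ is locally integrable, $\mathcal{P}=r_{_{\Omega}}{\bf P}\,\widetilde E^{s}$ for $-1/2<s<1/2$ (with $\widetilde E^{s}=\mathring E$ for $0\le s<1/2$), while for $1/2\le s<3/2$, where $\widetilde E^{s}$ is no longer available, $\mathcal{P}=r_{_{\Omega}}{\bf P}\,\mathring E$ with $\mathring E\colon H^{s}(\Omega)\to\widetilde H^{1/2-\varepsilon}(\Omega)\hookrightarrow H^{1/2-\varepsilon}(\mathbb{R}^3)$ bounded for every $\varepsilon\in(0,1)$. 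Thus it suffices to analyse $r_{_{\Omega}}{\bf P}\,e$ with $e$ the relevant extension.

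Next I would show that $r_{_{\Omega}}{\bf P}\,e$ smooths by exactly two orders. Write $P=\chi F_\Delta=F_\Delta+(\chi-1)F_\Delta$. The convolution with $F_\Delta$ has symbol $-|\xi|^{-2}$, an even rational function homogeneous of degree $-2$; it therefore satisfies the transmission condition and belongs to the Boutet de Monvel algebra (cf. \cite{BdM}, \cite[Theorem 8.6.1]{HW}). Its genuine singularity at $\xi=0$ is irrelevant here because only $r_{_{\Omega}}$ matters and $\Omega$ is bounded, so a properly supported cut-off equal to $1$ on $\overline{\Omega}$ removes it at the cost of an infinitely smoothing term, while the remaining high-frequency part is bounded $H^{t}(\mathbb{R}^3)\to H^{t+2}(\mathbb{R}^3)$. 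The kernel $(\chi-1)(x)F_\Delta(x)$ is strictly less singular at the origin than $F_\Delta$ because $\chi(0)=1$ and $\breve{\chi}$ is continuous at $0$ for $\chi\in X^{1}$, so convolution with it is of strictly lower order and harmless; this is precisely the content of the estimates in \cite[Theorem 5.4]{CMN-Loc2} and \cite[Appendix B]{CMN-IEOT2013}. Hence $r_{_{\Omega}}{\bf P}\,\mathring E\colon\widetilde H^{s}(\Omega)\to H^{s+2}(\Omega)$ is bounded for $s>-1/2$, which gives the $H^{s+2}(\Omega)$-component of \eqref{B-d5}--\eqref{B-d6}; composing with $\mathring E\colon H^{s}(\Omega)\to\widetilde H^{1/2-\varepsilon}(\Omega)$ gives the $H^{5/2-\varepsilon}(\Omega)$-component of \eqref{B-d7}, for which the arguments of \cite{CMN-Loc2}, \cite{CMN-IEOT2013} employ the slightly stronger assumption $\chi\in X^{2}$.

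For the $\Delta$-component I would invoke the parametrix identity \eqref{2.49}, valid for $\phi\in\mathcal{D}(\Omega)$ and, by density, for $\phi\in H^{0}(\Omega)$, together with the smoothing $\mathcal{R}_\Delta\colon H^{s}(\Omega)\to H^{s+1}(\Omega)$, which for $s=0$ is \eqref{2.50-1} and in general follows from $R_\Delta(x-y)=\mathcal{O}(|x-y|^{-2})$ as in \cite[Theorem 5.4]{CMN-Loc2}. Applied componentwise this gives $\Delta\mathcal{P}h=h+\mathcal{R}_\Delta h$, so $\Delta\mathcal{P}h\in H^{s}(\Omega)$ for $-1/2<s<1/2$, which together with the preceding step is exactly $\mathcal{P}h\in H^{s+2,s}(\Omega;\Delta)$; for $1/2\le s<3/2$ the already obtained inclusion $\mathcal{P}h\in H^{5/2-\varepsilon}(\Omega)$ forces $\Delta\mathcal{P}h\in H^{1/2-\varepsilon}(\Omega)$, i.e. $\mathcal{P}h\in H^{5/2-\varepsilon,1/2-\varepsilon}(\Omega;\Delta)$; in each case the operator norm is controlled by the uniform bounds from the previous steps. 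The main obstacle is the two-orders-of-smoothing claim for $r_{_{\Omega}}{\bf P}\,e$: the symbol $-|\xi|^{-2}$ is truly singular at $\xi=0$, so ${\bf P}$ is \emph{not} bounded $H^{s}(\mathbb{R}^3)\to H^{s+2}(\mathbb{R}^3)$ on the whole space, and one must carefully separate its high-frequency part (treated by the transmission-condition/Boutet de Monvel machinery exactly as for $\mathcal{N}$ in \eqref{2.40}) from its low-frequency part (treated by the cut-off, using boundedness of $\Omega$), while simultaneously verifying that passing from $F_\Delta$ to $\chi F_\Delta$ costs only a kernel of strictly lower singularity; all of this is carried out in \cite{CMN-Loc2} and \cite[Appendix B]{CMN-IEOT2013}, so in the paper the proof amounts to a reference to those sources.
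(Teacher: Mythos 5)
The paper offers no proof of this theorem; it only states that the proofs coincide with or are similar to those in \cite{CMN-Loc2} and \cite[Appendix B]{CMN-IEOT2013}. Your reconstruction is consistent with that route and with the paper's own treatment of the singular operator $\mathcal{N}$ via \eqref{N0}--\eqref{N1}, \eqref{2.40}--\eqref{2.40-}, and with the parametrix identity \eqref{2.49} and the smoothing property \eqref{2.50-1} used in the proof of Lemma~\ref{l2.1}.

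Two cosmetic corrections. First, in \eqref{B-d5} the domain is $\widetilde{H}^s(\Omega)$, a subspace of $H^s(\mathbb{R}^3)$, so there no extension operator is needed: your line ``$r_\Omega{\bf P}\,\mathring E\colon\widetilde H^{s}(\Omega)\to H^{s+2}(\Omega)$'' should be either $r_\Omega{\bf P}\colon\widetilde H^{s}(\Omega)\to H^{s+2}(\Omega)$ (for \eqref{B-d5}) or $r_\Omega{\bf P}\,\widetilde E^s\colon H^{s}(\Omega)\to H^{s+2}(\Omega)$ (for \eqref{B-d6}). Second, ``a properly supported cut-off equal to $1$ on $\overline{\Omega}$'' is a cut-off in $x$-space and does not directly remove the $\xi=0$ singularity of $-|\xi|^{-2}$; what one does is split the multiplier in $\xi$-space into a compactly supported low-frequency part (whose inverse Fourier transform is an entire function, so the associated convolution is infinitely smoothing on compactly supported inputs once restricted to the bounded $\Omega$) and a high-frequency part that is a genuine order $-2$ multiplier bounded $H^{t}(\mathbb{R}^3)\to H^{t+2}(\mathbb{R}^3)$. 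These are imprecisions of exposition, not gaps; the outline is correct.
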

\begin{thm}
\label{tB.4}
The following operators are continuous
\begin{align}
\label{1-ddd}
 V    &: H^{s-\frac{3}{2}}(S) \to
 H^{s}( {\mathbb{R}^3}),\quad s<\frac{3}{2}\ ,\quad
\text{if}\ \chi  \in X^1, \quad \\
 \label{LSing}
  &: H^{s-\frac{3}{2}}(S) \to
 H^{s,s-1}(\Omega^\pm;\Delta) ,
 \quad \frac{1}{2}< s<\frac{3}{2},  \quad\text{if}\ \chi  \in X^2,
 \\
\label{2-ddd}
 W   &:  H^{s-\frac{1}{2}}(S)\to H^{s}({\Omega^\pm}) ,
 \quad s<\frac{3}{2}\ ,\quad
\text{if}\;\;\chi  \in X^2,
 \\
 \label{LDoubl}
 &:  H^{s-\frac{1}{2}}(S)\to
 H^{s,s-1}(\Omega^\pm;\Delta) ,
 \quad \frac{1}{2}< s<\frac{3}{2},  \quad\text{if}\ \chi  \in X^3.
\end{align}
\end{thm}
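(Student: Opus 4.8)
The plan is to peel off the classical harmonic layer potentials and treat the localization as a lower-order perturbation, following \cite{CMN-Loc2} and \cite[Appendix B]{CMN-IEOT2013}. Write $P(x-y)=P_\Delta(x-y)\,I$ with $P_\Delta(z)=\chi(z)F_\Delta(z)$, split $P_\Delta=F_\Delta+P_\Delta^{r}$ where $P_\Delta^{r}(z):=[\chi(z)-1]F_\Delta(z)$, and correspondingly decompose $V=V_\Delta+V^{r}$, $W=W_\Delta+W^{r}$, with $V_\Delta,W_\Delta$ the potentials generated by the unlocalised kernel $F_\Delta$ ($V_\Delta$ being a component-wise copy of the classical harmonic single layer, and $W_\Delta$ the double-layer-type potential with kernel $a^{pr}_{kj}(x)n_k(x)\partial_{x_j}F_\Delta(x-y)$). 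For the principal parts one invokes the classical mapping estimates (see \cite{McL}, \cite{Costabel1988}, \cite{HW}): $V_\Delta:H^{s-3/2}(S)\to H^{s}(\mathbb R^3)$ and $W_\Delta:H^{s-1/2}(S)\to H^{s}(\Omega^\pm)$ on the ranges $s<3/2$; and since $\Delta_y F_\Delta(x-y)=\delta(x-y)$, both $V_\Delta g$ and $W_\Delta g$ are harmonic in $\Omega^\pm$, hence trivially lie in $H^{s,s-1}(\Omega^\pm;\Delta)$.

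Next I would estimate the remainder operators. Because $\chi(0)=1$, the factor $\chi(z)-1$ vanishes at the origin, and for $\chi\in X^{k}$ the radial profile $\breve\chi$ lies in $W_1^{k}$ with the decay required at infinity; consequently each derivative that falls on $P_\Delta^{r}$ (one for the co-normal operator $T$ occurring in $W$, two for the Laplacian) yields a kernel only $\mathcal O(|z|^{-2})$, resp.\ $\mathcal O(|z|^{-3})$, near $z=0$, and is absorbed by one unit of $W_1^{k}$-smoothness of $\breve\chi$. This is exactly why $\chi\in X^{1}$ suffices for $V:H^{s-3/2}(S)\to H^{s}(\mathbb R^3)$, while $\chi\in X^{2}$ is needed for $W:H^{s-1/2}(S)\to H^{s}(\Omega^\pm)$ and for the $\Delta$-statement about $V$, and $\chi\in X^{3}$ for the $\Delta$-statement about $W$. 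The underlying smoothing bounds $V^{r}:H^{s-3/2}(S)\to H^{s+1}(\mathbb R^3)$ and $W^{r}:H^{s-1/2}(S)\to H^{s+1}(\Omega^\pm)$ are the kernel estimates of \cite{CMN-Loc2}, which transfer here verbatim since $V$ and $W$ are diagonal $3\times3$ versions of the scalar operators treated there.

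For the $H^{s,s-1}(\Omega^\pm;\Delta)$-conclusions on the full potentials I would differentiate under the integral sign using \eqref{2.47}--\eqref{2.48}: for $y\in\Omega^\pm$ the Dirac term $\delta(x-y)$ is supported on $S$ and drops out, leaving
\[
\Delta_y (Vg)(y)=\int_S R_\Delta(x-y)\,g(x)\,dS_x,\qquad
\Delta_y (Wg)(y)=-\int_S [\,T(x,\partial_x)R_\Delta(x-y)\,]\,g(x)\,dS_x ,
\]
where $R_\Delta(z)=\mathcal O(|z|^{-2})$ and $T(x,\partial_x)R_\Delta(z)=\mathcal O(|z|^{-3})$. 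These (weakly, resp.\ strongly) singular surface operators map $H^{s-3/2}(S)$, resp.\ $H^{s-1/2}(S)$, into $H^{s-1}(\Omega^\pm)$ on the ranges $1/2<s<3/2$; combined with the $H^{s}$-bounds obtained above, this gives $Vg,Wg\in H^{s,s-1}(\Omega^\pm;\Delta)$.

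The main obstacle — and the reason the statement is quoted here with a pointer to \cite{CMN-Loc2} rather than reproved in detail — is the sharp bookkeeping: controlling the Sobolev exponents near the endpoints $s=1/2$ and $s=3/2$ while establishing the remainder estimates under the \emph{optimal} $X^{k}$ hypotheses, not with a convenient surplus of smoothness of $\chi$.
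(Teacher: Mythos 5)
The paper does not actually prove Theorem~\ref{tB.4}: it is stated without proof in Appendix~B, and the introductory sentence of that appendix attributes all the potential mapping properties to \cite{CMN-Loc2} and \cite[Appendix~B]{CMN-IEOT2013}. So there is no internal argument here to compare your proposal against; what follows is an assessment of your sketch on its own terms and against what one expects from those references.

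Your overall architecture — peel off the non-localized layer potentials with kernel $F_\Delta$, treat the factor $\chi-1$ as a lower-order perturbation, and derive the $\Delta$-membership from \eqref{2.47}--\eqref{2.48} by noting that the Dirac contribution is supported on $S$ and so drops out for $y\in\Omega^\pm$ — is indeed the route taken in the cited works, and the observation that the unlocalized potentials are harmonic in $\Omega^\pm$ (so the principal parts lie trivially in $H^{s,s-1}(\Omega^\pm;\Delta)$) is correct and is the real point of the $\Delta$-claims. Two caveats, however. First, the ``smoothing'' bounds $V^r:H^{s-3/2}(S)\to H^{s+1}(\mathbb R^3)$, $W^r:H^{s-1/2}(S)\to H^{s+1}(\Omega^\pm)$ are both more than needed for \eqref{1-ddd}, \eqref{2-ddd} and not plausibly available at the stated minimal smoothness of $\chi$; to obtain \eqref{1-ddd} it suffices that $V^r$ be \emph{as} regular as $V_\Delta$, not smoother. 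Second, and more substantively, the pointwise singularity bookkeeping you rely on ($R_\Delta=\mathcal O(|z|^{-2})$, $T(x,\partial_x)R_\Delta=\mathcal O(|z|^{-3})$) requires $\breve\chi'$, $\breve\chi''$ bounded near the origin; at the minimal thresholds $\chi\in X^2$ and $\chi\in X^3$ one only has $\breve\chi''\in L^1$, resp.\ $\breve\chi'''\in L^1$, so the leading terms $\breve\chi''(|z|)/|z|$ and $\breve\chi'''(|z|)/|z|$ in \eqref{2.48} and its derivative are not pointwise $\mathcal O(|z|^{-2})$ or $\mathcal O(|z|^{-3})$. You are right to call this ``the sharp bookkeeping'' where the work lies, but it means the pointwise-kernel route you sketch does not close exactly at the smoothness level the theorem asserts: one has to pass to integral or Fourier-side estimates for $\breve\chi$, which is what \cite{CMN-Loc2} does and what justifies the precise $X^k$ indices in the statement.
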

\begin{thm}
\label{tB.5}
If $\,\chi \in X^k$ has a compact support and  $-\frac{1}{2}\leq s\leq \frac{1}{2}$, then the following localized
operators are continuous
\begin{eqnarray}
\label{1-dd}
 V    &:& H^{s}(S) \to
 H^{s+ \frac{3}{2}}( {\Omega^\pm}) \;\;\text{for}\;\; k=2,
 \\
\label{2-dd}
 W   &:&  H^{s+1}(S)\to H^{s+\frac{3}{2}}({\Omega^\pm}) \;\;\text{for}\;\; k=3.
 \end{eqnarray}
 \end{thm}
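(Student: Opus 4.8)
The plan is to split the localized parametrix $P$ into the part governed by the fundamental solution $F_\Delta$ and a weakly singular remainder, to dispose of the first part by the known mapping properties of the harmonic layer potentials, and to treat the remainder by the localized--operator estimates already established in \cite{CMN-Loc2}. Since $\breve{\chi}\in W_1^k(0,\infty)$ with $k\ge 2$, the function $\chi$ is Lipschitz, $\chi(0)$ is well defined, and one writes $P(x)=\chi(0)\,F_\Delta(x)\,I+P_0(x)$ with $P_0(x):=[\chi(x)-\chi(0)]\,F_\Delta(x)\,I$, and correspondingly $V=\chi(0)\,V_\Delta+V_0$, $W=\chi(0)\,W_\Delta+W_0$, where $V_\Delta$ and $W_\Delta$ denote the classical single-- and double--layer potentials of the Laplace operator (i.e. the potentials \eqref{2.29}, \eqref{2.30} with $P$ replaced by $F_\Delta\,I$). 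For $V_\Delta$ and $W_\Delta$ the continuity properties $V_\Delta:H^s(S)\to H^{s+3/2}(\Omega^\pm)$ and $W_\Delta:H^{s+1}(S)\to H^{s+3/2}(\Omega^\pm)$ hold for all $s\in\mathbb R$ and are classical (see \cite{McL}, \cite{Costabel1988}, \cite{HW}).

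It then remains to show that $V_0$ and $W_0$ map into the same (in fact a smoother) space. First I would subordinate a finite partition of unity to a cover of $S$ by coordinate patches in which $S$ is flattened, which is possible as $S\in C^\infty$; the compact support of $\chi$ ensures that each localized piece of $V_0$ is a properly supported operator with a genuine classical symbol obtained from the tangential Fourier transform of its kernel. Because $\chi(0)$ has been subtracted, $\chi(x)-\chi(0)=\mathcal O(|x|)$ near the origin with bounded gradient, so the kernel of $V_0$, namely $[\chi(x-y)-\chi(0)]F_\Delta(x-y)$, is $\mathcal O(1)$ on the diagonal, hence one order less singular than that of $V_\Delta$. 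Repeating the symbol computation of \cite{CMN-Loc2} (cf. also \cite[Appendix~B]{CMN-IEOT2013}) shows that, locally, $V_0$ is a Poisson--type operator one order smoother than $V_\Delta$, whence $V_0:H^s(S)\to H^{s+5/2}(\Omega^\pm)\hookrightarrow H^{s+3/2}(\Omega^\pm)$. This argument consumes exactly one extra degree of smoothness of $\breve{\chi}$ in the control of the symbol remainder, which is why $\chi\in X^2$ is required, and the restriction $-\tfrac12\le s\le\tfrac12$ is precisely the range for which these remainder estimates and the transmission conditions at $S$ remain valid with $\chi\in X^2$.

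For $W_0$ the argument is identical, except that $W$ carries the additional first--order co--normal derivative $T(x,\partial_x)$ acting on $P$, so that the kernel of $W_0$ equals, up to smooth coefficients, $[\chi(x-y)-\chi(0)]\,\partial F_\Delta(x-y)+[\nabla\chi(x-y)]\,F_\Delta(x-y)$; each summand is again one order smoother than the kernel $\partial F_\Delta$ of $W_\Delta$, provided $\nabla\chi$ is controlled with one further derivative than in the single--layer case, i.e. provided $\chi\in X^3$. The same localization--flattening--tangential--Fourier argument then yields $W_0:H^{s+1}(S)\to H^{s+5/2}(\Omega^\pm)\hookrightarrow H^{s+3/2}(\Omega^\pm)$ for $-\tfrac12\le s\le\tfrac12$. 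Adding the classical and the remainder contributions gives \eqref{1-dd} and \eqref{2-dd}.

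I expect the main obstacle to be the second and third steps: the precise symbol (remainder) estimate showing that $V_0$ and $W_0$ gain one extra order \emph{uniformly} with respect to the distance of the evaluation point to $S$ (not merely away from $S$). This is exactly where the finite smoothness class $X^2$ (resp. $X^3$) of the localizing function enters and where the upper bound $s\le\tfrac12$ on the Sobolev index becomes unavoidable; it is carried out by following the computations of \cite{CMN-Loc2} and \cite[Appendix~B]{CMN-IEOT2013}.
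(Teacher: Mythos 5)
Your decomposition $P=\chi(0)\,F_\Delta I+[\chi-\chi(0)]\,F_\Delta I$, which reduces \eqref{1-dd}--\eqref{2-dd} to the classical harmonic layer potentials plus a remainder with a kernel one order less singular, is essentially the argument of \cite{CMN-Loc2} and \cite[Appendix B]{CMN-IEOT2013}, to which the paper itself defers for the proof of Theorem \ref{tB.5}. The only caveat is that your intermediate claim that the remainder $V_0$ (kernel $[\chi(x-y)-\chi(0)]F_\Delta(x-y)$) maps $H^{s}(S)$ into $H^{s+5/2}(\Omega^\pm)$ asserts more than is needed and is delicate at $s=\frac{1}{2}$ with only $\chi\in X^2$; the weaker statement that the remainder maps into $H^{s+3/2}(\Omega^\pm)$ already suffices for the conclusion and is what the cited estimates actually provide.
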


 \begin{thm}
\label{tB.7}
Let $\psi\in H^{-\frac{1}{2}}(S)$ and
 $\varphi\in H^{\frac{1}{2}}(S)$.
Then  the following jump relations  hold on $S$:
\begin{eqnarray}
&&
 \label{3.8j}
\gamma^\pm V \psi=\mathcal{V} \psi,
 \quad \chi  \in X^1,\\
 &&
 \label{3.9j}
\gamma^\pm W\varphi = \mp\,\boldsymbol{\mu}\, \varphi +\mathcal{W} \varphi,\quad \chi \in X^{2},
\\
&&
 \label{3.10j}
T^\pm V  \psi =\pm\,\boldsymbol{\mu}\, \psi+ \mathcal{W}' \psi, \quad \chi  \in X^2,
\end{eqnarray}
where
\begin{equation}
\label{mu}
\boldsymbol{\mu}(y)=[\boldsymbol{\mu}^{pq}(y)]_{p,q=1}^3
:=\frac{1}{2}\,\big[\,a^{pq}_{kj}(y)\,n_k(y)\,n_j(y)\,\big]_{p,q=1}^3,\quad y\in S,
\end{equation}
and $\boldsymbol{\mu}(y)$ is positive definite due to \eqref{1-d}.
\end{thm}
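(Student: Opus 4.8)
The plan is to reduce all three jump relations to the classical jump relations of the harmonic single and double layer potentials, by stripping from the localized kernels every term that is too weakly singular to generate a jump. Write $P(x-y)=F_\Delta(x-y)\,I+\big[\chi(x-y)-1\big]F_\Delta(x-y)\,I$. Since $\chi(0)=1$ and, for $\chi\in X^2$, $\breve\chi\in W_1^2(0,\infty)$ so that $\breve\chi'\in W_1^1(0,\infty)\hookrightarrow L^\infty$ and $\breve\chi$ is Lipschitz near the origin, the factor $\chi(z)-1$ is $\mathcal{O}(|z|)$ near $z=0$ and $\nabla\chi$ is bounded near $z=0$. Hence, after a single differentiation in $x$ or in $y$, every kernel arising from $\big[\chi(x-y)-1\big]F_\Delta(x-y)$ or from a derivative landing on $\chi$ is of order $\mathcal{O}(|x-y|^{-1})$, i.e.\ weakly singular on the two-dimensional manifold $S$; the corresponding layer operators extend continuously across $S$ and contribute only to the direct-value operators $\mathcal V$, $\mathcal W$, $\mathcal W'$, carrying no jump. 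Freezing the smooth data, $a^{pq}_{kj}(x)=a^{pq}_{kj}(y)+\big[a^{pq}_{kj}(x)-a^{pq}_{kj}(y)\big]$ and likewise for the normal components $n_k(x)=n_k(y)+\mathcal{O}(|x-y|)$, the remainder factors $\mathcal{O}(|x-y|)$ again improve the kernel order by one and produce no jump.

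For relation \eqref{3.8j}, by \eqref{1-ddd} in Theorem~\ref{tB.4} the potential $V\psi$ belongs to $H^{1}(\mathbb R^3)$ for $\psi\in H^{-1/2}(S)$ and $\chi\in X^1$, so its trace on $S$ is a single element of $H^{1/2}(S)$, the same from $\Omega^+$ and from $\Omega^-$; identifying this common trace with the direct value \eqref{3.11} yields $\gamma^\pm V\psi=\mathcal V\psi$ with no jump, needing only $\chi\in X^1$. For \eqref{3.10j} one uses in addition \eqref{LSing}: for $\chi\in X^2$ and $\psi\in H^{-1/2}(S)$ one has $V\psi\in H^{1,0}(\Omega^\pm;\Delta)$, so $T^\pm V\psi$ is well defined through \eqref{4-d}.

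After the reductions above, the only jump-producing term for \eqref{3.9j} is $\mp$ the frozen operator with kernel $a^{pq}_{kj}(y)\,n_k(x)\,\partial_{x_j}F_\Delta(x-y)$, and for \eqref{3.10j} it is $\pm$ the frozen operator with kernel $a^{pq}_{kj}(y)\,n_k(y)\,\partial_{y_j}F_\Delta(x-y)$; these differ only by the sign of the derivative, which is exactly the origin of the opposite signs $\mp$ and $\pm$ in the two relations. Decompose the acting derivative as $\partial_j=n_j\,\partial_n+(\partial_j-n_j\,\partial_n)$, i.e.\ into its normal and tangential components. The tangential component, a priori a Cauchy-type singular operator on $S$, is treated by integration by parts on the closed $C^\infty$ surface $S$: the surface-tangential derivative is transferred off the kernel onto the density, the coefficients and the surface element, lowering the kernel order to $\mathcal{O}(|x-y|^{-1})$, hence contributing no jump, only a direct value. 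The normal component is, up to the constant matrix factor $\big[a^{pq}_{kj}(y)\,n_k(y)\,n_j(y)\big]=\big[2\,\boldsymbol\mu^{pq}(y)\big]$, exactly the classical harmonic double layer (respectively, the co-normal derivative of the harmonic single layer), whose boundary values obey the classical relations with jump $\mp\frac{1}{2}$ (respectively $\pm\frac{1}{2}$). Multiplication by $\big[2\,\boldsymbol\mu^{pq}(y)\big]$ then produces the jump terms $\mp\boldsymbol\mu\,\varphi$ and $\pm\boldsymbol\mu\,\psi$, while all non-jumping pieces (the $\chi$-remainders, the coefficient and normal remainders, the tangential contributions and the principal values) reassemble, by comparison with the definitions \eqref{3.11}--\eqref{3.13}, into $\mathcal W\varphi$ and $\mathcal W'\psi$. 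The derivative count gives the stated smoothness thresholds: $\chi\in X^1$ for \eqref{3.8j} and $\chi\in X^2$ for \eqref{3.9j} and \eqref{3.10j}. Finally, positive definiteness of $\boldsymbol\mu$ is immediate from \eqref{1-d}: since $|n(y)|=1$, one has $\bar\zeta\cdot\boldsymbol\mu(y)\,\zeta=\frac{1}{2}\,\bar\zeta\cdot A(y,n(y))\,\zeta\ge\frac{c_1}{2}\,|\zeta|^2$.

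The main technical obstacle is the surface integration by parts for the tangential part of the double-layer kernel, together with the careful bookkeeping verifying that the non-jumping remainders assemble exactly into the operators $\mathcal V$, $\mathcal W$, $\mathcal W'$ of \eqref{3.11}--\eqref{3.13}; this uses the $C^\infty$ smoothness of $S$, the smoothness of the coefficients, and the precise regularity class of $\chi$. The scalar prototypes of these estimates, as well as the mapping properties of the localized potentials invoked above, are established in \cite{CMN-Loc2}, \cite[Appendix~B]{CMN-IEOT2013} and \cite[Chapter~8]{HW}; the only genuinely new feature here is the constant matrix factor $\big[a^{pq}_{kj}(y)\,n_k\,n_j\big]$ surviving in the normal component, which is exactly what turns the scalar jump $\mp\frac{1}{2}$ into $\mp\boldsymbol\mu$.
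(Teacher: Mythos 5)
Your argument is correct and is essentially the approach the paper relies on: the paper itself offers no proof of Theorem~\ref{tB.7}, deferring to \cite{CMN-Loc2}, \cite{CMN-IEOT2013} and \cite{HW}, and those references proceed exactly as you do --- peel off the weakly singular remainders coming from $\chi-1$, $\nabla\chi$ and the freezing of $a^{pq}_{kj}$ and $n$, split the remaining derivative into normal and tangential parts, and invoke the classical harmonic jump relations, so that the only new feature is the matrix factor $a^{pq}_{kj}(y)n_k(y)n_j(y)=2\boldsymbol{\mu}^{pq}(y)$. Your signs are consistent with the paper's own use of \eqref{3.9j} in deriving \eqref{2.41}, and the positive-definiteness of $\boldsymbol{\mu}$ via \eqref{1-d} with $|n(y)|=1$ is exactly right.
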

\begin{thm}
\label{tB.6}
 Let $-\frac{1}{2} \leq s\leq \frac{1}{2}$. The following  operators
\begin{eqnarray}
\label{op1}
\mathcal{ V}   &:& H^{s }(S)\to
H^{s+1}(S), \quad  \chi  \in X^2, \\
\label{op2}
 \mathcal{ W}   &:&  H^{s+1}(S)\to
 H^{s+1}(S), \quad  \chi  \in X^3,\\
 \label{op3}
  {\mathcal{W}' }  &:&  H^{s}(S)\to
  H^{s}(S),\quad  \chi  \in X^3, \\
  \label{op4}
\mathcal{ L} ^{\pm }      &:&
H^{s+1}(S)\to
H^{s}(S),\quad \chi  \in X^3,
\end{eqnarray}
are continuous.
\end{thm}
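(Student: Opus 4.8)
The plan is to deduce the statement from its three cited ingredients. Writing $\mathfrak D$ in the block form used in Theorem~\ref{th2.2}, I would first verify that $\mathfrak D$ restricts to a \emph{bounded} operator from $H^{1,0}(\Omega,A)\times H^{-\frac12}(S)$ into $H^{1,0}(\Omega,\Delta)\times H^{\frac12}(S)$, then prove that this restriction is injective and surjective, and finally invoke the Banach bounded inverse theorem.

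For boundedness, note that on $H^1(\Omega)$ the domain block of $\mathfrak D$ equals $\boldsymbol\beta+\mathcal N$ by \eqref{N0}, and Corollary~\ref{C1} states exactly that $\boldsymbol\beta+\mathcal N$ maps $H^{1,0}(\Omega,A)$ continuously into $H^{1,0}(\Omega,\Delta)$. For the single-layer block I would use Theorem~\ref{tB.4}: relation \eqref{LSing} with $s=1$ (which needs only $\chi\in X^2$) gives $V:H^{-\frac12}(S)\to H^{1,0}(\Omega,\Delta)$, so the first row of $\mathfrak D$ lands in $H^{1,0}(\Omega,\Delta)$. For the boundary row, \eqref{2.40} with $r=1$ composed with the trace $\gamma^+$ yields $\mathbf N^+\mathring E=\gamma^+\mathcal N:H^1(\Omega)\to H^{\frac12}(S)$, while Theorem~\ref{tB.6}, relation \eqref{op1} with $s=-\frac12$, gives $\mathcal V:H^{-\frac12}(S)\to H^{\frac12}(S)$. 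Since $\chi\in X^3_+$ every smoothness hypothesis is met, so $\mathfrak D$ is bounded into the claimed target.

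Injectivity is immediate: if $(u,\psi)$ lies in the kernel of $\mathfrak D$ on $H^{1,0}(\Omega,A)\times H^{-\frac12}(S)$, then $(u,\psi)$ solves the homogeneous system \eqref{2.52}-\eqref{2.53} (data $f=0$, $\varphi_0=0$), so by Theorem~\ref{th-D-eq}(ii) the vector $u$ solves the homogeneous Dirichlet BVP \eqref{1}-\eqref{2}, whence $u=0$ by Remark~\ref{rBVP} and $\psi=T^+u=0$; alternatively, injectivity is inherited from Corollary~\ref{c2.3} through the embedding $H^{1,0}(\Omega,A)\hookrightarrow H^1(\Omega)$. For surjectivity, take $(F,\Phi)\in H^{1,0}(\Omega,\Delta)\times H^{\frac12}(S)$; since $H^{1,0}(\Omega,\Delta)\hookrightarrow H^1(\Omega)$, Corollary~\ref{c2.3} supplies a unique $(u,\psi)\in H^1(\Omega)\times H^{-\frac12}(S)$ with $\mathfrak D(u,\psi)=(F,\Phi)$. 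Its first block equation, $(\boldsymbol\beta+\mathcal N)u-V\psi=F$ in $\Omega$, is precisely equation \eqref{2.44} of Lemma~\ref{l2.1} with $\varphi=0$ and $f=0$; the lemma then gives $u\in H^{1,0}(\Omega,A)$, so $(u,\psi)$ already belongs to $H^{1,0}(\Omega,A)\times H^{-\frac12}(S)$ and is a preimage of $(F,\Phi)$.

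Thus $\mathfrak D:H^{1,0}(\Omega,A)\times H^{-\frac12}(S)\to H^{1,0}(\Omega,\Delta)\times H^{\frac12}(S)$ is a continuous bijection between Banach spaces and therefore invertible. I do not expect a real obstacle here: the delicate fact that the domain operator respects the graph norm of $H^{1,0}(\Omega,\Delta)$ is exactly Corollary~\ref{C1}, and the upgrade of an $H^1(\Omega)$-solution to an $H^{1,0}(\Omega,A)$-solution is exactly Lemma~\ref{l2.1}, so the proof amounts to assembling these two facts with the mapping properties of the localized potentials recalled in Appendix~B.
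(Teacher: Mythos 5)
Your proposal does not address the statement it is supposed to prove. Theorem \ref{tB.6} asserts the continuity of the four localized \emph{boundary} integral operators $\mathcal V$, $\mathcal W$, $\mathcal W'$ and $\mathcal L^{\pm}$ of \eqref{3.11}--\eqref{3.14} on the Sobolev scale of the closed surface $S$. What you have written is instead a proof of the last corollary of Section 5 --- the invertibility of $\mathfrak D\colon H^{1,0}(\Omega,A)\times H^{-1/2}(S)\to H^{1,0}(\Omega,\Delta)\times H^{1/2}(S)$ --- assembled from Corollary \ref{C1}, Corollary \ref{c2.3}, Lemma \ref{l2.1} and Theorem \ref{th-D-eq}. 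That is a different assertion. Moreover, your argument explicitly invokes Theorem \ref{tB.6} itself (relation \eqref{op1} with $s=-\frac{1}{2}$, to obtain $\mathcal V\colon H^{-1/2}(S)\to H^{1/2}(S)$), so read as a proof of Theorem \ref{tB.6} it is circular.

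A genuine proof of Theorem \ref{tB.6} is of a completely different nature: one must show that the kernels $P(x-y)$, $[T(x,\partial_x)P(x-y)]^{\top}$, $T(y,\partial_y)P(x-y)$ and the composition $T^{\pm}W$ define pseudodifferential operators on $S$ of orders $-1$, $0$, $0$ and $+1$ respectively, modulo remainder kernels (coming from the deviation of $\chi F_\Delta$ from the exact fundamental solution $F_\Delta$) whose extra smoothing is controlled by the hypotheses $\chi\in X^2$ or $\chi\in X^3$; these orders produce exactly the index shifts in \eqref{op1}--\eqref{op4}, and the restriction $-\frac{1}{2}\le s\le\frac{1}{2}$ reflects the finite smoothness of the localizing function. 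The paper does not reproduce this argument at all: the opening of Appendix \ref{AB} states that the proofs coincide with or are similar to those in \cite{CMN-Loc2} and in Appendix B of \cite{CMN-IEOT2013}. So either you misidentified the target statement, or you need to start over with an analysis of the boundary operators themselves rather than of the operator $\mathfrak D$.
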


\ack
This research was supported by the grants EP/H020497/1: ''Mathematical Analysis of Localized Boundary-Domain Integral Equations for Variable-Coefficient Boundary Value Problems'' and EP/M013545/1: "Mathematical Analysis of Boundary-Domain Integral Equations for Nonlinear PDEs", from the EPSRC, UK, and by the grant of the Shota Rustaveli National Science Foundation FR/286/5-101/13, 2014- 2017: ''Investigation of dynamical mathematical models of elastic multi-component structures with regard to fully coupled thermo-mechanical and electro-magnetic fields''.


\end{document}